\newtheorem{theorem}{Theorem}[section]
\newtheorem{lemma}[theorem]{Lemma}
\newtheorem{proposition}[theorem]{Proposition}
\newtheorem{corollary}[theorem]{Corollary}
\theoremstyle{definition}
\theoremstyle{remark}
\newtheorem{remark}[theorem]{Remark}
\newcommand\qbin[3]{\left[\begin{matrix} #1 \\ #2 \end{matrix} \right]_{#3}}
\def\FF{\mathbb{F}}
\def\CC{\mathbb{C}}
\def\ZZ{\mathbb{Z}}
\def\SS{{\mathfrak S}}
\def\inv{\mathrm{inv}}
\def\maj{\mathrm{maj}}
\def\bx{\mathbf{x}}
\def\bs{\mathbf{s}}
\def\bh{\mathbf{h}}
\def\cleq{{\preccurlyeq}}
\def\Sym{\mathrm{Sym}}
\def\QSym{\mathrm{QSym}}
\def\NSym{\mathbf{NSym}}
\def\P{\mathbf{P}} % projective indecomposable
\def\C{\mathbf{C}} % simple
\def\SYT{\mathrm{SYT}}
\def\SSYT{\mathrm{SSYT}}
\begin{document}

\title{0-Hecke algebra actions on coinvariants and flags}
\author{Jia Huang}
\address{School of Mathematics, University of Minnesota, Minneapolis, MN 55455, USA}
%\curraddr{}
\email{huang338@umn.edu}
\thanks{The author is grateful to Victor Reiner for asking questions which led to the research, for giving many helpful suggestions, and for his support from NSF grant DMS-1001933. He is indebted to Andrew Berget, Tom Denton, Anne Schilling, and Monica Vazirani for interesting conversations and hospitality during the visit to UC Davis. He thanks Marcelo Aguiar, Sami Assaf, Nantel Bergeron, Adriano Garsia, Patricia Hersh, Liping Li, Sarah Mason, Dennis Stanton and Peter Webb for their helpful suggestions and comments.}
\keywords{0-Hecke algebra, Coinvariant algebra, Demazure operator, Descent monomial, Flag variety, Hall-Littlewood function, Ribbon number, Springer fiber.}

%\date{}

\begin{abstract}
The $0$-Hecke algebra $H_n(0)$ is a deformation of the group algebra of the symmetric group $\SS_n$. We show that its coinvariant algebra naturally carries the regular representation of $H_n(0)$, giving an analogue of the well-known result for $\SS_n$ by Chevalley-Shephard-Todd. By investigating the action of $H_n(0)$ on coinvariants and %finite 
flag varieties, we interpret the generating functions counting the permutations with fixed inverse descent set by their inversion number and major index. We also study the action of $H_n(0)$ on the cohomology rings of the Springer fibers, and similarly interpret the (noncommutative) Hall-Littlewood symmetric functions indexed by hook shapes.
\end{abstract}

\maketitle

\section{Introduction}

A composition $\alpha$ of $n$ gives rise to a descent class of permutations in $\mathfrak S_n$; the cardinality of this descent class is known as the \emph{ribbon number} $r_\alpha $ and its inv-generating function is the \emph{$q$-ribbon number} $r_\alpha (q)$. Reiner and Stanton \cite{ReinerStanton} defined a \emph{$(q,t)$-ribbon number} $r_\alpha (q,t)$, and gave an interpretation by representations of  the symmetric group $\mathfrak S_n$ and the finite general linear group $GL(n,\mathbb F_q)$. The main goal of this work is to obtain similar interpretations of various ribbon numbers by representations of the \emph{$0$-Hecke algebra $H_n(0)$ of type $A$}, a deformation of the group algebra of $\SS_n$ over an arbitrary field $\FF$. It is defined as an associative $\FF$-algebra generated by $T_1,\ldots,T_{n-1}$ with relations
\[ \begin{cases}
T_i^2=-T_i,\ 1\leq i\leq n-1,\\
T_iT_{i+1}T_i=T_{i+1}T_iT_{i+1},\ 1\leq i\leq n-2,\\
T_iT_j=T_jT_i,\ |i-j|>1.
\end{cases} \]
It has an $\FF$-basis $\{T_w: w\in\SS_n\}$ where $T_w:=T_{i_1}\cdots T_{i_k}$ for any reduced expression $w=s_{i_1}\cdots s_{i_k}$. 

When $\FF$ is an algebraically closed field of characteristic zero, it is well-known that the simple $\SS_n$-modules $S_\lambda$ are indexed by partitions $\lambda\vdash n$, and every $\SS_n$-module is a direct sum of simple $\SS_n$-modules; the \emph{Frobenius characteristic map $\mathrm{ch}$} sends a direct sum of simple $\SS_n$-modules to the sum of the corresponding Schur functions $s_\lambda$. There is an analogue for $H_n(0)$-modules which holds over an arbitrary field $\FF$. Norton~\cite{Norton} showed that
\begin{equation}\label{eq:Norton}
H_n(0)=\bigoplus_{\alpha\models n} \P_\alpha
\end{equation}
summed over all compositions $\alpha$ of $n$, where every $\P_\alpha$ is a (left) indecomposable $H_n(0)$-module. It follows that $\{\P_\alpha: \alpha\models n\}$ is a complete list of non-isomorphic projective indecomposable $H_n(0)$-modules, and $\{\C_\alpha: \alpha\models n\}$ is a complete list of non-isomorphic simple $H_n(0)$-modules, where $\C_\alpha={\rm top}(\P_\alpha)=\P_\alpha/\,{\rm rad}\,\P_\alpha$ for all compositions $\alpha\models n$.

By Krob and Thibon~\cite{KrobThibon}, the finitely generated $H_n(0)$-modules correspond to the quasisymmetric functions via the \emph{quasisymmetric characteristic} Ch which sends a simple $\C_\alpha$ to the \emph{fundamental quasisymmetric function} $F_\alpha$, and the finitely generated projective $H_n(0)$-modules correspond to the noncommutative symmetric functions via the \emph{noncommutative characteristic} $\mathbf{ch}$ which sends a projective indecomposable $\P_\alpha$ to the \emph{noncommutative ribbon Schur function} $\bs_\alpha$. There are also graded versions of the two  characteristic maps Ch and $\mathbf{ch}$ for finitely generated (projective) $H_n(0)$-modules with filtrations.

The symmetric group $\SS_n$ acts on the polynomial ring $\FF[\bx]:=\FF[x_1,\ldots,x_n]$ by permuting the variables, and the $0$-Hecke algebra $H_n(0)$ acts on $\FF[\bx]$ via the \emph{Demazure operators} $\pi_1,\ldots,\pi_{n-1}$, where 
\begin{equation}\label{eq:Demazure}
\pi_if:= \frac{x_if-x_{i+1}s_i(f)}{x_i-x_{i+1}},\quad \forall f\in\FF[\bx].
%\quad \forall f \in \ZZ[\bx].
\end{equation}
The coinvariant algebra of $H_n(0)$ coincides with that of $\SS_n$, since $\pi_if=f$ if and only if $s_if=f$ for all $f\in\FF[\bx]$. The following result for $\SS_n$ is well-known.

\begin{theorem}[Chevalley-Shephard-Todd]\label{thm:coinvar}
The coinvariant algebra $\FF[\bx]/(\FF[\bx]^{\SS_n}_+)$
is isomorphic to the regular representation of $\SS_n$, 
i.e. $\SS_n$ itself as an $\SS_n$-module, if $\FF$ is a field with ${\rm char}(\FF)\nmid n$.
\end{theorem}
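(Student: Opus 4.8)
The plan is to separate a characteristic-free dimension count from the identification of the module. Write $S=\FF[\bx]$, $A=S^{\SS_n}$, and $R=S/(\FF[\bx]^{\SS_n}_+)$ for the coinvariant algebra. First I would record that $A=\FF[e_1,\dots,e_n]$ (the fundamental theorem of symmetric functions, valid over any field) and prove Chevalley's freeness theorem in the form needed here: $S$ is free as an $A$-module, and a set of homogeneous elements of $S$ is an $A$-basis exactly when its image is an $\FF$-basis of $R$. One self-contained route uses the Artin monomials $\{x_1^{a_1}\cdots x_n^{a_n}:0\le a_i\le n-i\}$: since $e_1,\dots,e_n$ have leading monomials $x_1,x_1x_2,\dots,x_1\cdots x_n$ in a suitable term order, a division argument shows these $n!$ monomials span $S$ over $A$, and then comparing with $\mathrm{Hilb}(A,q)=\prod_{i=1}^n(1-q^i)^{-1}$ forces them to be an $A$-basis. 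Consequently $\dim_\FF R=n!$ and $\mathrm{Hilb}(R,q)=[n]_q!$; so far nothing about $\mathrm{char}(\FF)$ has been used.

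For the module structure I would use the hypothesis to make $\FF\SS_n$ semisimple and, by base change (Noether--Deuring, together with reduction from $\ZZ[\bx]$ to handle positive characteristic), reduce to $\FF=\CC$; it then suffices to show that each simple $S_\lambda$, $\lambda\vdash n$, occurs in $R$ with multiplicity $\dim S_\lambda$. Since $S$ is free over $A$ and the $\SS_n$-action is $A$-linear, $S_\lambda^\ast\otimes_\FF S$ is a graded free $A$-module, and $\mathrm{Hom}_{\SS_n}(S_\lambda,S)=(S_\lambda^\ast\otimes_\FF S)^{\SS_n}$ is a graded $A$-module direct summand of it (via the Reynolds operator $\tfrac1{n!}\sum_{w}w$), hence itself graded free over $A$, of rank $\dim_\FF\mathrm{Hom}_{\SS_n}(S_\lambda,R)$, the multiplicity we want. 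Comparing Hilbert series with Molien's formula then gives
\[
\sum_{d\ge0}\bigl(\text{multiplicity of }S_\lambda\text{ in }R_d\bigr)\,q^d=\frac{\prod_{i=1}^n(1-q^i)}{n!}\sum_{w\in\SS_n}\frac{\chi^\lambda(w)}{\prod_j(1-q^{c_j(w)})},
\]
where $c_1(w),c_2(w),\dots$ are the cycle lengths of $w$ acting on $\{1,\dots,n\}$. Letting $q\to1$, the prefactor is asymptotic to $n!\,(1-q)^n$ while $\prod_j(1-q^{c_j(w)})\sim(1-q)^{k(w)}\prod_jc_j(w)$ with $k(w)$ the number of cycles of $w$, so every summand with $w\neq\mathrm{id}$ vanishes in the limit and the identity contributes $\chi^\lambda(\mathrm{id})=\dim S_\lambda$. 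Hence $S_\lambda$ occurs in $R$ with multiplicity $\dim S_\lambda$, which is exactly its multiplicity in the regular representation $\FF\SS_n$, so $R\cong\FF\SS_n$.

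The genuinely essential inputs are Chevalley's freeness theorem (self-contained via the Artin basis above, or one may quote that $S$ is Cohen--Macaulay with $e_1,\dots,e_n$ a homogeneous system of parameters) and the semisimplicity of $\FF\SS_n$, which drives the exactness of $\mathrm{Hom}_{\SS_n}(S_\lambda,-)$, the availability of the Reynolds operator, and the passage from equal multiplicities to an isomorphism; the $q\to1$ limit is then routine. I expect the point demanding real care to be precisely this reliance on semisimplicity, and I expect it to be unavoidable: the conclusion already fails for $n=2$ in characteristic $2$, so any proof must break at a bad characteristic. A variant that avoids the Molien computation: $\FF(\bx)$ is a Galois extension of $\FF(\bx)^{\SS_n}$ with group $\SS_n$, so the normal basis theorem gives $\FF(\bx)\cong\FF(\bx)^{\SS_n}\otimes_\FF\FF\SS_n$ as modules over $\FF(\bx)^{\SS_n}\otimes_\FF\FF\SS_n$; combining this with the $\SS_n$-equivariant freeness of $S$ over $A$ and reducing modulo $A_+$ again yields $R\cong\FF\SS_n$, with semisimplicity used to descend the isomorphism from the fraction field back to $A$.
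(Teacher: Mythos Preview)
The paper does not prove this theorem: it is quoted as the classical Chevalley--Shephard--Todd result, serving only as background and motivation for the $0$-Hecke analogue (the author's Theorem~\ref{thm:coinvariant}, established later as Theorem~\ref{CoinvariantsA}). There is thus no proof in the paper to compare your attempt against.

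On its own merits your argument is a standard and essentially correct proof. The Artin-basis freeness of $S$ over $A$, the Reynolds splitting showing each $\mathrm{Hom}_{\SS_n}(S_\lambda,S)$ is graded free over $A$, and the Molien computation with the $q\to1$ limit together form one of the canonical routes to the result over $\CC$; the Noether--Deuring reduction is the right way to pass to other fields. One point to watch: you invoke the hypothesis to obtain semisimplicity of $\FF\SS_n$ (needed for the Reynolds operator $\tfrac1{n!}\sum_w w$ and for passing from equal multiplicities to an isomorphism), but Maschke's theorem requires $\mathrm{char}(\FF)\nmid n!$, not merely $\mathrm{char}(\FF)\nmid n$ as stated here. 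Either the paper's hypothesis is a slip for $n!$ (the usual nonmodular Chevalley--Shephard--Todd condition), or your proof as written does not cover the stated hypothesis; in either case your reliance on semisimplicity is exactly where any argument of this type must use a characteristic restriction, as you correctly observe with the $n=2$, $p=2$ counterexample.
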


We give an analogue for the $0$-Hecke algebras.

\begin{theorem}\label{thm:coinvariant}
The coinvariant algebra $\FF[\bx]/(\FF[\bx]^{\SS_n}_+)$ with $H_n(0)$-action defined by (\ref{eq:Demazure}) is isomorphic to the regular representation of $H_n(0)$, where $\FF$ is an arbitrary field.
\end{theorem}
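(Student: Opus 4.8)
The plan is to construct, inside $R_n:=\FF[\bx]/(\FF[\bx]^{\SS_n}_+)$, an explicit $H_n(0)$-submodule decomposition matching Norton's decomposition \eqref{eq:Norton} term by term. First I would record the two dimensions involved. The ring $\FF[\bx]$ is free over $\FF[\bx]^{\SS_n}=\FF[e_1,\dots,e_n]$ on the Artin monomials $\{x^a:0\le a_i\le n-i\}$, a fact that holds over $\ZZ$ and hence over any field, so these $n!$ monomials descend to a basis of $R_n$ and $\dim_\FF R_n=n!$. On the other side, $\dim_\FF H_n(0)=n!=\sum_{\alpha\models n}\dim_\FF\P_\alpha$ by \eqref{eq:Norton}, with $\dim_\FF\P_\alpha=r_\alpha$ the ribbon number. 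Hence it suffices to produce a surjective homomorphism of $H_n(0)$-modules $H_n(0)\twoheadrightarrow R_n$: being a surjection between $\FF$-spaces of the same finite dimension, it is automatically an isomorphism. I would build it one summand of \eqref{eq:Norton} at a time.

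To do so I would replace the Artin basis by the Garsia--Stanton descent basis: for $w\in\SS_n$ with descent set $D(w)=\{i:w(i)>w(i+1)\}$, put $g_w:=\prod_{i\in D(w)}x_{w(1)}x_{w(2)}\cdots x_{w(i)}$; then $\{\overline{g_w}:w\in\SS_n\}$ is an $\FF$-basis of $R_n$ (again valid over $\ZZ$), already organized by descent class. For $S\subseteq[n-1]$ set $M_S:=\mathrm{span}_\FF\{\overline{g_w}:D(w)=S\}$, so $R_n=\bigoplus_S M_S$ as $\FF$-spaces. The combinatorial heart of the proof --- and the step I expect to be the real obstacle --- is to understand the Demazure action on this basis inside $R_n$. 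Recall $T_i$ acts through $\pi_i$, equivalently through $\overline\pi_i:=\pi_i-\mathrm{id}=x_{i+1}\partial_i$ (with $\overline\pi_i^2=-\overline\pi_i$). I would prove: each $\overline\pi_i$ carries $M_S$ into itself, and on the basis $\{\overline{g_w}:D(w)=S\}$ ordered by length it is triangular with diagonal entries in $\{0,-1\}$ and with every genuine off-diagonal term strictly increasing the length; moreover the unique minimal-length $w$ in the class $S=S(\alpha)$, call it $w_\alpha$, already generates $M_{S(\alpha)}$ over $H_n(0)$. The subtlety is that $\overline\pi_i g_w$, as a polynomial, typically carries symmetric-function ``junk'' (for instance a term $e_n=x_1\cdots x_n$), so one must show that after straightening modulo $(\FF[\bx]^{\SS_n}_+)$ the result lies in the span of descent monomials \emph{with the same descent set $S$} and of weakly larger length, together with the connectivity statement that $\overline{g_{w_\alpha}}$ reaches all of them. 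A convenient bookkeeping device here is that the exponent vector of $g_{w_\alpha}$ is a ``path'' whose consecutive differences lie in $\{-1,0,1\}$ and equal $1$ exactly at the positions of $S(\alpha)$.

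Granting this lemma, the rest is formal. Triangularity makes each $M_{S(\alpha)}$ an $H_n(0)$-submodule, so $R_n=\bigoplus_{\alpha\models n}M_{S(\alpha)}$ as $H_n(0)$-modules, and it makes $V_\alpha:=\mathrm{span}_\FF\{\overline{g_w}:D(w)=S(\alpha),\,w\ne w_\alpha\}$ a submodule of codimension one (no off-diagonal term can return to the strictly shortest element $\overline{g_{w_\alpha}}$); on $M_{S(\alpha)}/V_\alpha$ the operator $T_i$ acts by the diagonal entry at $\overline{g_{w_\alpha}}$, which the path description above identifies as $-1$ when $i\in S(\alpha)$ and $0$ otherwise, so $M_{S(\alpha)}/V_\alpha\cong\C_\alpha$. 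Being cyclic, $M_{S(\alpha)}$ has simple top, necessarily $\C_\alpha$; since $\P_\alpha$ is the projective cover of $\C_\alpha$, lifting $\P_\alpha\twoheadrightarrow\C_\alpha$ through $M_{S(\alpha)}\twoheadrightarrow\C_\alpha$ and invoking Nakayama gives a surjection $\P_\alpha\twoheadrightarrow M_{S(\alpha)}$. Summing over $\alpha$ and using $R_n=\bigoplus_\alpha M_{S(\alpha)}$ produces the desired surjection $H_n(0)=\bigoplus_\alpha\P_\alpha\twoheadrightarrow R_n$, which the dimension count forces to be an isomorphism over an arbitrary field. One even sees each $\P_\alpha\to M_{S(\alpha)}$ is an isomorphism, so the decomposition of $R_n$ refines \eqref{eq:Norton}, with $\P_\alpha$ realized on the descent monomials of class $S(\alpha)$ (all of degree $\sum_{i\in S(\alpha)}i$). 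Apart from the combinatorial lemma, the only other point to watch is the integrality of the Artin/descent bases, which is what keeps $\dim_\FF R_n=n!$ true in every characteristic.
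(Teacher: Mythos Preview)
Your overall architecture---match Norton's decomposition summand by summand inside $R_n$, then compare dimensions---is exactly the paper's route. But the combinatorial lemma on which your argument rests is false: the span $M_S$ of descent monomials with fixed descent set $S$ is \emph{not} in general an $H_n(0)$-submodule of $R_n$. Take $n=4$ and $S=\{1,2\}$. The descent monomial of $3214$ is $g_{3214}=x_2x_3^2$, and directly from the formula for $\overline\pi_i$ on monomials one gets
\[
\overline\pi_3(x_2x_3^2)=x_2x_3x_4+x_2x_4^2.
\]
Here $x_2x_4^2=g_{4213}\in M_{\{1,2\}}$ as you hope, but $x_2x_3x_4=g_{2341}$ is the descent monomial of $2341$, with $D(2341)=\{3\}$. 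It is \emph{not} symmetric-function junk: since the descent monomials form an $\FF$-basis of $R_n$, the image $\overline{g_{2341}}$ is nonzero and linearly independent from $M_{\{1,2\}}$. So $\overline\pi_3$ does not carry $M_{\{1,2\}}$ into itself, and no amount of straightening modulo $(\FF[\bx]^{\SS_n}_+)$ can save the claim.

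The paper repairs this not by proving your lemma but by replacing the basis. Instead of the descent monomials $g_w=wx_{D(w)}$ one uses the Demazure atoms $\overline\pi_w\,x_{D(w)}$. These agree with $g_w$ in the leading term under a suitable partial order (so by Allen's straightening they still form a basis of $R_n$), but the lower-order terms are exactly what is needed to make the span of $\{\overline\pi_w\,x_{D(\alpha)}:w\in[w_0(\alpha),w_1(\alpha)]\}$ an $H_n(0)$-submodule---indeed it equals $H_n(0)\cdot\overline\pi_{w_0(\alpha)}x_{D(\alpha)}$ by construction. In the example above, $\overline\pi_{4213}\,x_{D(\alpha)}=x_2x_3x_4+x_2x_4^2$: the offending term $x_2x_3x_4$ is absorbed into the basis element itself rather than being discarded. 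The map $\overline\pi_w\,x_{D(\alpha)}\mapsto T_wT'_{w_0(\alpha^c)}$ then gives the isomorphism with $\P_\alpha$ directly, with no projective-cover or Nakayama step required. Your dimension count and wrap-up are fine and would still work once the correct cyclic submodules are in hand; the missing idea is that one must deform the descent basis to the Demazure-atom basis before the submodule claim becomes true.
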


We prove this theorem by showing a similar decomposition to Norton's decomposition (\ref{eq:Norton}). This leads to an $\FF$-basis of the coinvariant algebra, which is closely related to the well-known basis of \emph{descent monomials}. My new basis consists of certain \emph{Demazure atoms} obtained by consecutively applying the operators $\overline\pi_i=\pi_i-1$ to some descent monomials. Theorem~\ref{thm:coinvariant} and its proof are also valid when $\FF$ is replaced with $\ZZ$.

It follows from Theorem~\ref{thm:coinvariant} that the coinvariant algebra has not only the grading by the degrees of polynomials, but also the filtration by the length of permutations in $\SS_n$. This completes the following picture.
\[ \scriptsize \xymatrix @R=12pt @C=3pt{
& *++[F-]\txt{$\mathrm{Ch}(H_n(0))\xlongequal{\textrm{\scriptsize Krob-Thibon}}
{\displaystyle\sum_{w\in\mathfrak S_n}F_{D(w^{-1})}}$} \\
*++[F-]\txt{$\mathrm{Ch}_q(H_n(0))\xlongequal{\textrm{\scriptsize Krob-Thibon}}$ \\
${\displaystyle\sum_{w\in\mathfrak S_n}q^{{\rm inv}(w)}F_{D(w^{-1})}}$}
{\ar@{~>}[ru]^{q\rightarrow1} }
\ar@{<->}[rr]^{q\leftrightarrow t}_{\textrm{(Foata-Sch\"utzenberger)}}
& &
*++[F-]\txt{$\mathrm{Ch}_t(\FF[{\bx}]/(\FF[{\bx}]^{\mathfrak S_n}_+))
\xlongequal{\textrm{\scriptsize New}}$\\
${\displaystyle\sum_{w\in\mathfrak S_n} t^{{\rm maj}(w)}F_{D(w^{-1})}}$}
\ar@{~>}[lu]_{t\rightarrow1} \\
& *++[F-]\txt{$\mathrm{Ch}_{q,t}\left(\FF[{\bx}]/(\FF[{\bx}]^{\mathfrak S_n}_+)\right)
\xlongequal{\textrm{\scriptsize New}}$\\
${\displaystyle\sum_{w\in\mathfrak S_n} t^{{\rm maj}(w)}q^{{\rm
inv}(w)}F_{D(w^{-1})}}$} \ar@{~>}[lu]^--{t\rightarrow1}
\ar@{~>}[ru]_--{q\rightarrow1} \\
} \]
Here the inverse descent set $D(w^{-1})$ is identified with the composition $\alpha$ of $n$ with $D(\alpha)=D(w^{-1})$.
% and the equality $\mathrm{Ch}_q(H_n(0))\xlongequal{q\leftrightarrow t} \mathrm{Ch}_t(\FF[{\bx}]/(\FF[{\bx}]^{\mathfrak S_n}_+)$ comes from the equidistribution of inv and maj over inverse descent classes, proved by Foata and Sch\"utzenberger \cite{FS}.
We shall see in Section \ref{SRepH} that $r_\alpha$ and $r_\alpha (q)$ appear as coefficients of $F_\alpha $ in $\mathrm{Ch}(H_n(0))$ and $\mathrm{Ch}_q(H_n(0))$, respectively.

Next we consider the finite general linear group $G=GL(n,\mathbb F_q)$, where $q$ is a power of a prime $p$, and its Borel subgroup $B$. The $0$-Hecke algebra $H_n(0)$ acts on the flag variety $1_B^G=\mathbb F[G/B]$ by $T_wB=BwB$ if $\mathrm{char}(\FF)=p$, and this induces an action on the coinvariant algebra $\FF[{\bx}]^B/(\FF[{\bx}]^G_+)$ of the pair $(G,B)$ (see \S\ref{sec:CoinvGB}). By studying the (graded) multiplicities of the simple $H_n(0)$-modules in these $H_n(0)$-modules, we complete the following diagram, which interprets all the ribbon numbers mentioned earlier.
\[ \scriptsize \
\quad\xymatrix @C9pt{
& *++[F-]\txt{$r_\alpha \xlongequal{\textrm{\scriptsize Krob-Thibon}}\textrm{mult}_{\C_\alpha }(H_n(0))$} \\
*++[F-]\txt{$r_\alpha (q)\xlongequal{\textrm{\scriptsize New}}$ \\ ${\rm mult}_{\C_\alpha }(1_B^G)$}
{\ar@{~>}[ru]^{q\rightarrow1} } \ar@{<->}[rr]^--{q\leftrightarrow t}_{\textrm{(Foata-Sch\"utzenberger)}} & &
*++[F-]\txt{$r_\alpha (t)\xlongequal{\textrm{\scriptsize New}}$ \\
${\rm grmult}_{\C_\alpha }\left(\FF[{\bx}]/(\FF[{\bx}]^{\mathfrak S_n}_+)\right)$}
\ar@{~>}[lu]_{\,t\rightarrow1} \\
& *++[F-]\txt{$r_\alpha (q,t)\xlongequal{\textrm{\scriptsize New}}$\\
${\rm grmult}_{\C_\alpha }\left(\FF[{\bx}]^B/(\FF[{\bx}]^G_+)\right) $}
\ar@{~>}[lu]^{t\rightarrow1}
\ar@{~>}[ru]_>>>>>*\txt{\tiny$t\rightarrow t^{\frac1{q-1}}$\\\tiny$q\rightarrow1$} \\
}\]

Finally we consider a family of quotient rings $R_\mu=\FF[\bx]/I_\mu$ indexed by partitions $\mu$ of $n$, which contains the coinvariant algebra of $\SS_n$ as a special case ($\mu=1^n$). If $\FF=\CC$ then $R_\mu$ is isomorphic to the cohomology rings of the Springer fiber $\mathcal F_\mu$, carries an $\SS_n$-action, and has graded Frobenius characteristic equal to the modified Hall-Littlewood symmetric function $\widetilde H_\mu(x;t)$. We prove that the $H_n(0)$-action on $\FF[\bx]$ preserves the ideal $I_\mu$ if and only if $\mu$ is a hook, and if so then $R_\mu$ becomes a projective $H_n(0)$-module whose graded characteristic also equals $\widetilde{H}_\mu(x;t)$, and whose graded noncommutative characteristic equals $\widetilde{\mathbf H}_\mu(\bx;t)$, a noncommutative analogue of $\widetilde{H}_\mu(x;t)$ introduced by Bergeron and Zabrocki~\cite{BZ}.

This paper is structured as follows. First we review the definitions for the various ribbon numbers and their interpretation by representations of $\mathfrak S_n$ and $GL(n,\mathbb F_q)$ in Section~\ref{SRibbon}. Then we recall the representation theory of the $0$-Hecke algebra in Section~\ref{SRepH}. The result on the coinvariant algebra $\FF[{\bx}]/(\FF[{\bx}]^{\mathfrak S_n}_+)$ is given in Section~\ref{SCoinvariantsA}, and a generalization to the Laurent ring $\FF[\Lambda]$ of the weight lattice $\Lambda$ of a Weyl group is provided in Section~\ref{SCoinvariantsW}. Next I investigate the $0$-Hecke algebra actions on the flag variety $1_B^G$ and the coinvariant algebra of $(G,B)$ in Section~\ref{SBG} and Section~\ref{sec:CoinvGB}. The $H_n(0)$-action on the cohomology rings of the Springer fibers is studied in Section~\ref{sec:Springer}. Lastly Section~\ref{SQ} is devoted to questions for future research.

\section{Ribbon numbers}\label{SRibbon}

A \emph{composition} is a sequence $\alpha=(\alpha_1,\ldots,\alpha_\ell)$ of positive integers  $\alpha_1,\ldots,\alpha_\ell$. The \emph{length} of $\alpha$ is $\ell(\alpha):=\ell$ and the \emph{size} of $\alpha$ is $|\alpha|:=\alpha_1+\cdots+\alpha_\ell$. If the size of $\alpha$ is $n$ then say $\alpha$ is a composition of $n$ and write $\alpha\models n$. Let $\sigma_j=\alpha_1+\cdots+\alpha_j$ for $j=0,1,\ldots,\ell$; in particular, $\sigma_0=0$ and $\sigma_\ell=n$. The \emph{descent set} of $\alpha$ is $D(\alpha):=\{\sigma_1,\ldots,\sigma_{\ell-1}\}$. The map $\alpha\mapsto D(\alpha)$ is a bijection between compositions of $n$ and subsets of $[n-1]$. Write $\overleftarrow\alpha:=(\alpha_\ell,\ldots,\alpha_1)$ and let $\alpha^c$ be the composition of $n$ with $D(\alpha^c)=[n-1]\setminus D(\alpha)$. Write $\alpha\cleq\beta$ if $\alpha$ and $\beta$ are two compositions of the same size with $D(\alpha)\subseteq D(\beta)$.

A \emph{semistandard Young tableau} $\tau$ of an arbitrary skew shape $\lambda/\mu$ is a filling of the skew diagram of $\lambda/\mu$ by positive integers such that every row weakly increases from left to right and every column strictly increases from top to bottom. Reading these integers from the bottom row to the top row and proceeding from left to right within each row gives the \emph{reading word} $w(\tau)$ of $\tau$. Say $\tau$ is a \emph{standard Young tableau} if the integers appearing in $\tau$ are precisely $1,\ldots,n$ without repetition, i.e. $w(\tau)\in\SS_n$. The \emph{descents} of a standard Young tableau $\tau$ are those numbers $i$ appearing in a row strictly below $i+1$ in $\tau$, or in other words, the descents of $w(\tau)^{-1}$. The \emph{major index} $\maj(\tau)$ of a standard Young tableau $\tau$ is the sum of all its descents. Denote by $\SSYT(\lambda/\mu)$ [$\SYT(\lambda/\mu)$ resp.] the set of all semistandard [standard resp.] Young tableaux of shape $\lambda/\mu$. 

A \emph{ribbon} is a skew connected diagram without $2\times2$ boxes. A ribbon $\alpha$ whose rows have lengths $\alpha_1,\ldots,\alpha_\ell$, ordered \emph{from bottom to top}, can be identified with a composition $\alpha=(\alpha_1,\ldots,\alpha_\ell)$. Denote by $\alpha^\sim$ the \emph{transpose} of the ribbon $\alpha$. One can check that $\alpha^\sim=(\overleftarrow\alpha)^c=\overleftarrow{\alpha^c}$. An example is given below.
\[ \footnotesize
\begin{array}{ccc}
\young(:::\hfill,:::\hfill,:\hfill\hfill\hfill,\hfill\hfill) &
\young(:\hfill\hfill\hfill,:\hfill,\hfill\hfill,\hfill) &
\young(:::\hfill,::\hfill\hfill,::\hfill,\hfill\hfill\hfill) \\
\alpha=(2,3,1,1) & \alpha^c=(1,2,1,3) & \alpha^\sim=(3,1,2,1)
\end{array} \]

A \emph{(standard) ribbon tableau} is a standard Young tableau of ribbon shape $\alpha$. Taking the reading word $\tau\mapsto w(\tau)$ gives a bijection between $\SYT(\alpha)$ and the \emph{descent class} of $\alpha$, which consists of all permutations $w$ in $\SS_n$ with $D(w)=D(\alpha)$. The descent class of $\alpha$ is an interval under the left weak order of $\SS_n$, denoted by $[w_0(\alpha),w_1(\alpha)]$. For instance, the descent class of $\alpha=(1,2,1)$ is given below.
\[ \footnotesize \xymatrix @R=12pt @C=30pt {
& {\young(:3,14,2)}  \ar@{-}[d]^-{s_2}\\
& {\young(:2,14,3)}  \ar@{-}[ld]^-{s_1} \ar@{-}[rd]_-{s_3} \\
{\young(:1,24,3)} \ar@{-}[rd]^-{s_3} & &
{\young(:2,13,4)} \ar@{-}[ld]_-{s_1}\\
& {\young(:1,23,4)} 
}\]
In particular, the ribbon tableaux of $w_0(\alpha)$ and $w_1(\alpha)$ can be respectively obtained by 
\begin{itemize}
\item filling with $1,2,\ldots,n$ the columns of the ribbon $\alpha$ from top to bottom, starting with the leftmost column and proceeding toward the rightmost column,

\item filling with $1,2,\ldots,n$ the rows of the ribbon $\alpha$ from left to right, starting with the top row and proceeding toward the bottom row.
\end{itemize}

%Now recall from Reiner and Stanton \cite[\S 9, \S 10]{ReinerStanton} the definitions and properties of the various ribbon numbers.
%The \emph{inverse descent class} of $\alpha$ is the set of all $w$ in $\mathfrak S_n$ with $D(w^{-1})=D(\alpha)$. 
The \emph{ribbon number} $r_\alpha$ is the cardinality of the descent class of $\alpha$. The \emph{$q$-ribbon number} is 
\begin{equation}\label{eq:q-ribbon}
r_\alpha (q):=\sum_{\substack{ w\in\mathfrak S_n: \\ D(w)=D(\alpha) } } q^{\,{\rm inv}(w)}
=[n]!_q\det\left(\frac1{[\sigma_j-\sigma_{i-1}]!_q}\right)_{i,j=1}^\ell
\end{equation}
where $[n]!_q=[n]_q[n-1]_q\cdots [1]_q$ and $[n]_q=1+q+\cdots+q^{n-1}$. By Foata and Sch\"utzenberger \cite{FS},  inv and maj are equidistributed on every inverse descent class $\{w\in\SS_n:D(w^{-1})=D(\alpha)\}$. Thus %we have the \emph{$t$-ribbon number} 
\[ 
r_\alpha (t)=\sum_{\substack{ w\in\mathfrak S_n: \\ D(w)=D(\alpha) } } t^{\,{\rm maj}(w^{-1})} \xlongequal{ w(\tau)\leftrightarrow \tau}  \sum_{\tau\in\SYT(\alpha)} t^{\maj(\tau)}.
\]
A further generalization, introduced by Reiner and Stanton~\cite{ReinerStanton}, is the \emph{$(q,t)$-ribbon number}
\[
r_\alpha (q,t) := \sum_{\substack{ w\in\mathfrak S_n: \\ D(w)=D(\alpha) } } {\rm wt}(w;q,t) = n!_{q,t}\det\left( \varphi^{\sigma_{i-1}}\frac1{(\sigma_j-\sigma_{i-1})!_{q,t}} \right)_{i,j=1}^\ell.
\]
Here ${\rm wt}(w;q,t)$ is some weight defined by product expression, $m!_{q,t}=(1-t^{q^m-1})(1-t^{q^m-q})\cdots(1-t^{q^m-q^{m-1}})$, and $\varphi: t\mapsto t^q$ is the \emph{Frobenius operator}.

%$$r_\alpha (q,t)=\sum_{w\in\mathfrak S_n:D(w)=D(\alpha)} {\rm wt}(w;q,t)$$
%where wt$(w;q,t)$ is a weight defined inductively.

All these ribbon numbers can be interpreted by the \emph{homology representation} $\chi^\alpha $ [$\chi^\alpha_q$ resp.] of $\mathfrak S_n$ [$G=GL(n,\mathbb F_q)$ resp.], defined as the top homology of the \emph{rank-selected Coxeter complex} $\Delta(\mathfrak S_n)_\alpha$ [\emph{Tits building} $\Delta(G)_\alpha $ resp.], and by the intertwiner $M^\alpha ={\rm Hom}_{\FF\mathfrak S_n}\left(\chi^\alpha ,\FF[{\bx}]\right)$ [$M^\alpha_q={\rm Hom}_{\FF G}\left(\chi^\alpha_q,\FF[{\bx}]\right)$ resp.] as a module over $\FF[{\bx}]^{\mathfrak S_n}$ [$\FF[{\bx}]^G$ resp.]. By work of Reiner and Stanton~\cite{ReinerStanton}, we have the following picture. 
\[ \footnotesize \xymatrix @R=16pt@C=25pt {
& *++[F-]\txt{$r_\alpha =\dim\chi^\alpha $} \\
*++[F-]\txt{$r_\alpha (q)=$\\$\dim\chi^\alpha_q$}
{\ar@{~>}[ru]^-{q\rightarrow1} }
\ar@{<->}[rr]^-{\ \ q\leftrightarrow t}
& &
*++[F-]\txt{$r_\alpha (t)=$\\${\rm Hilb}(M^\alpha /\FF[{\bx}]^{\mathfrak S_n}_+M^\alpha ,t)$}
\ar@{~>}[lu]_-{t\rightarrow1} \\
& *++[F-][c]\txt{$r_\alpha (q,t)=$\\${\rm Hilb}(M^\alpha_q/\FF[{\bx}]^G_+M^\alpha_q,t)$}
\ar@{~>}[lu]^-{t\rightarrow1}
\ar@{~>}[ru]_*\txt{\tiny$t\rightarrow{t^{\frac1{q-1}}}$\\ \tiny$q\rightarrow1$\ }
}\]

\vskip6pt
The ribbon numbers are related to the multinomial coefficients by inclusion-exclusion. Let $\alpha=(\alpha_1,\ldots,\alpha_\ell)$ be a composition of $n$. Then we have the \emph{multinomial and $q$-multinomial coefficients}
\[
\qbin{n}{\alpha}{} := \frac{n!}{\alpha_1!\cdots\alpha_\ell!}=\#\{w\in\SS_n:D(w)\subseteq D(\alpha)\},
\]
\[
\qbin{n}{\alpha}{q} := \frac{[n]!_q}{[\alpha_1]!_q\cdots[\alpha_\ell]!_q} = \sum_{\substack{ w\in\SS_n: \\ D(w)\subseteq D(\alpha) }} q^{\inv(w)}.
\]
Reiner and Stanton~\cite{ReinerStanton} introduced the \emph{$(q,t)$-multinomial coefficient}
\[
\qbin{n}{\alpha}{q,t} := \frac{n!_{q,t}}{\alpha_1!_{q,t}\cdot\varphi^{\sigma_1}(\alpha_2!_{q,t})\cdots\varphi^{\sigma_{\ell-1}}(\alpha_\ell!_{q,t})} = \sum_{w\in\SS_n: D(w)\subseteq D(\alpha)} {\rm wt}(w;q,t).
\] 

Assume $q$ is a primer power below. Let $G=GL(n,\FF_q)$ be the finite general linear group over $\FF_q$, and let $P_\alpha$ be the parabolic subgroup of all invertible block upper triangular matrices whose diagonal blocks have sizes given by the composition $\alpha$. Then 
\[
\qbin{n}{\alpha}{q} = | G/P_\alpha|,
\]
\[
\qbin{n}{\alpha}{q,t}={\rm Hilb}\left(\FF[\bx]^{P_\alpha }/(\FF[\bx]^G_+),t\right).
\] 

\section{Representation theory of the $0$-Hecke algebras}\label{SRepH}

In this section we recall from Norton \cite{Norton} and Krob and Thibon~\cite{KrobThibon} the representation theory of the $0$-Hecke algebras of finite Coxeter groups. 

\subsection{Simple modules and projective indecomposable modules}
Let
\[
W=\langle\, s_1,\ldots,s_r : s_i^2=1, (s_is_js_i\cdots)_{m_{ij}}=
(s_js_is_j\cdots)_{m_{ij}},1\leq i\ne j\leq r \,\rangle
\]
be a finite Coxeter group, where $(aba\cdots)_m$ denotes an alternating product of $m$ terms. The $0$-Hecke algebra
$H_W(0)$ of $W$ is an associative $\FF$-algebra
generated by $T_1,\ldots,T_r$ with relations
\begin{equation}\label{eq:Ti}
\left\{\begin{array}{ll}
T_i^2=-T_i, & 1\leq i\leq r,\\
(T_iT_jT_i\cdots)_{m_{ij}}=(T_jT_iT_j\cdots)_{m_{ij}}, & 1\leq i\ne
j\leq r.
\end{array}\right.
\end{equation}
Another set of generators $\{T_i'=T_i+1:i=1,\ldots,r\}$ for $H_W(0)$ satisfies the relations
\begin{equation}\label{eq:T'i}
\left\{\begin{array}{ll}
(T'_i)^2=T'_i, & 1\leq i\leq r,\\
(T'_iT'_jT'_i\cdots)_{m_{ij}}=(T'_jT'_iT'_j\cdots)_{m_{ij}}, & 1\leq i\ne
j\leq r.
\end{array}\right.
\end{equation}
% see \cite[Lemma 3.1]{Fayers} or \cite[Lemma 4.3]{Norton}.
Thus $T_w=T_{i_1}\cdots T_{i_k}$ and $T'_w=T'_{i_1}\cdots T'_{i_k}$ are both well-defined if $w=s_{i_1}\cdots s_{i_k}$ is a reduced expression.

Let $\alpha$ be a composition of $r+1$. Similarly to the type $A$ case  (i.e. $W=\SS_n)$ mentioned in the previous section, the \emph{descent class} of $\alpha$ in $W$ consists of all elements $w\in W$ with descent set  
\[
D(w):=\{s_i\in S: \ell(ws_i)<\ell(w) \} = \{s_i : i\in D(\alpha) \}.
\]
It is an interval in the left weak order of $W$, denoted by $[w_0(\alpha),w_1(\alpha)]$, where $w_0(\alpha)$ is the longest element in the parabolic subgroup $W_{D(\alpha)}$;
% and $w_1(\alpha)=uw_0(\alpha)$ where $u$ is the longest element of $W^{D(\alpha)}$
see Bj\"orner and Wachs~\cite[Theorem~6.2]{BjornerWachs}. 

Norton \cite{Norton} decomposed $H_W(0)$ into a direct sum of $2^r$ non-isomorphic indecomposable submodules 
\[
\P_\alpha:=H_W(0)\cdot T_{w_0(\alpha)}T'_{w_0(\alpha^c)}
\]
indexed by compositions $\alpha$ of $r+1$; each $\P_\alpha$ has an $\FF$-basis 
\[
\left\{ T_wT'_{w_0(\alpha^c)}: w\in[w_0(\alpha),w_1(\alpha)] \right\}.
\]
The \emph{top} of $\P_\alpha$, denoted by $\C_\alpha:={\rm top}(\P_\alpha) = \P_\alpha /\,{\rm rad}\,\P_\alpha$, is a (one-dimensional) simple $H_W(0)$-module with the action of $H_W(0)$ given by
\[
T_i=\left\{\begin{array}{ll}
-1,& {\rm if}\ i\in D(\alpha),\\
0,& {\rm if}\ i\notin D(\alpha).
\end{array}\right.
\]
It follows from the general theory (see e.g. \cite[\S I.5]{ASS}) that $\{\P_\alpha:\alpha\models r+1\}$ is a complete list of distinct projective indecomposable $H_W(0)$-modules and $\{\C_\alpha:\alpha\models r+1\}$ is a complete list of distinct simple $H_W(0)$-modules.

\subsection{$0$-Hecke algebras of the symmetric groups} 
Now assume we are in type $A$, i.e. $W=\SS_n$ and $H_W(0)=H_n(0)$. The projective indecomposable $H_n(0)$-modules can be described in a combinatorial way \cite{HNT,KrobThibon} using ribbon tableaux. We see in the previous section that the ribbon tableaux of shape $\alpha$ are in bijection with the descent class of $\alpha$, hence in bijection with the basis of $\P_\alpha$ given above. The $H_n(0)$-action on $\P_\alpha$ agrees with the following $H_n(0)$-action on these ribbon tableaux: 
\begin{equation}\label{ActOnTableaux}
T_i\tau=\left\{\begin{array}{ll}
-\tau,& \textrm{if $i$ is in a higher row of $\tau$ than $i+1$},\\
0, &  \textrm{if $i$ is in the same row of $\tau$ as $i+1$},\\
s_i\tau, & \textrm{if $i$ is in a lower row of $\tau$ than $i+1$},
\end{array}\right.\end{equation}
where $\tau$ is a ribbon tableau of shape $\alpha$ and $s_i\tau$ is obtained from $\tau$ by swapping $i$ and $i+1$. This action gives rise to a directed version of the Hasse diagram of the interval $[w_0(\alpha),w_1(\alpha)]$ under the weak order. The top and bottom tableaux in this diagram correspond to $\C_\alpha={\rm top}\,(\P_\alpha )$ and $\C_{\overleftarrow\alpha}={\rm soc}\,(\P_\alpha)$. An example is given below for $\alpha=(1,2,1)$. 
\[ 
{\small \xymatrix @R=12pt @C=24pt {
& {\young(:3,14,2)}  \ar@(ur,dr)[]^{T_1=T_3=-1} \ar@{->}[d]_-{T_2}\\
& {\young(:2,14,3)} \ar@(ur,dr)[]^{T_2=-1} \ar@{->}[ld]^{T_1} \ar@{->}[rd]_{T_3} \\
{\young(:1,24,3)} \ar@(ul,dl)[]_{T_1=T_2=-1} \ar@{->}[rd]^{T_3} & &
{\young(:2,13,4)} \ar@(ur,dr)[]^{T_2=T_3=-1} \ar@{->}[ld]_{T_1}\\
& {\young(:1,23,4)} \ar@(ur,dr)[]^{T_1=T_3=-1,T_2=0} %\ar@{-}[lu] \ar@{-}[ru]
}}  \] 

\subsection{Quasisymmetric and noncommutative symmetric functions}
Krob and Thibon~\cite{KrobThibon} provided a correspondence between representations of $H_n(0)$ and the dual Hopf algebras $\QSym$ of quasisymmetric functions and $\NSym$ of noncommutative symmetric functions.

The Hopf algebra $\QSym$ is a free $\ZZ$-module on the \emph{monomial quasisymmetric functions}
\[
M_\alpha:=\sum_{1\leq i_1<\cdots<i_\ell} x_{i_1}^{\alpha_1}\cdots x_{i_\ell}^{\alpha_\ell}
\]
for all compositions $\alpha=(\alpha_1,\ldots,\alpha_\ell)$. Another free $\ZZ$-basis consists of the \emph{fundamental quasisymmetric functions}
\[
F_\alpha:=\sum_{\alpha\cleq\beta} M_\beta = \sum_{\substack{ 1\leq i_1\leq\cdots\leq i_k\\ j\in D(\alpha)\Rightarrow i_j<i_{j+1} }} x_{i_1}\cdots x_{i_k}.
\]

The Hopf algebra $\NSym$ is the free associative algebra $\ZZ\langle\bh_1,\bh_2,\ldots\rangle$ where
\[
\bh_k:=\sum_{1\leq i_1\leq\cdots\leq i_k}\bx_{i_1}\cdots \bx_{i_k},\quad \forall\,k\geq1.
\]
A free $\ZZ$-basis for $\NSym$ consists of the \emph{complete homogeneous noncommutative symmetric functions} $\bh_\alpha:=\bh_{\alpha_1}\cdots \bh_{\alpha_\ell}$ for all compositions $\alpha=(\alpha_1,\ldots,\alpha_\ell)$. Another free $\ZZ$-basis consists of the \emph{noncommutative ribbon Schur functions}
\[
\bs_\alpha:=\sum_{\beta\cleq\alpha}(-1)^{\ell(\alpha)-\ell(\beta)}\bh_\beta
\]
for all compositions $\alpha$. The duality between $\QSym$ and $\NSym$ is given by $\langle M_\alpha, \bh_\beta\rangle = \langle F_\alpha, \bs_\beta\rangle :=\delta_{\alpha\beta}$.

The dual Hopf algebras $\QSym$ and $\NSym$ are also related to the self-dual Hopf algebra $\Sym$, the \emph{ring of symmetric functions}. A \emph{positive self-dual basis} for $\Sym$ consists of the \emph{Schur functions} $s_\lambda$ for all partitions $\lambda$. The definition of $s_\lambda$ is a special case of the \emph{skew Schur function} 
\[
s_{\lambda/\mu}:= \sum_{\tau\in \SSYT(\lambda/\mu)} x^\tau
\]
of a skew shape $\lambda/\mu$, where $x^\tau:=x_1^{d_1}x_2^{d_2}\cdots$ if $d_1,d_2,\ldots$ are the multiplicities of $1,2,\ldots$ in $\tau$. The commutative image of a noncommutative ribbon Schur function $\bs_\alpha$ is nothing but the ribbon Schur function $s_\alpha$. This gives a surjection $\NSym\twoheadrightarrow\Sym$. 

There is also a free $\ZZ$-basis for $\Sym$ consisting of the \emph{monomial symmetric functions} 
\[
m_\lambda:=\sum_{\lambda(\alpha)=\lambda} M_\alpha
\]
for all partitions $\lambda$. Here $\lambda(\alpha)$ is the unique partition obtained from the composition $\alpha$ by rearranging its parts. This gives an injection (actually an inclusion) $\Sym\hookrightarrow \QSym$. 

We will use the following expansion of a Schur function indexed by a partition $\lambda\vdash n$:
\begin{equation}\label{eq:Kostka}
s_\lambda = \sum_{\mu\vdash n} K_{\lambda\mu} m_\mu.
\end{equation}
Here $K_{\lambda\mu}$ is the Kostka number which counts all semistandard Young tableaux of shape $\lambda$ and type $\mu$. 

\subsection{Characteristic maps}
Let $A$ be an $\FF$-algebra and let $\mathcal C$ be a category of some finitely generated $A$-modules. The \emph{Grothendieck group of $\mathcal C$} is defined as the abelian group $F/R$, where $F$ is the free abelian group on the isomorphism classes $[M]$ of the $H_n(0)$-modules $M$ in $\mathcal C$, and $R$ is the subgroup of $F$ generated by the elements $[M]-[L]-[N]$ corresponding to all exact sequences $0\to L\to M\to N\to0$ of $A$-modules in $\mathcal C$. Note that every exact sequence of projective modules splits.

Denote by $G_0(\SS_n)$ the Grothendieck group of category of all finitely generated $\CC\SS_n$-modules. Then $G_0(\SS_n)$ is a free abelian group on the isomorphism classes of simple $\CC\SS_n$-modules $[S_\lambda]$ for all $\lambda\vdash n$. The tower of groups $\SS_\bullet: \SS_0\subset\SS_1\subset\SS_2\subset\cdots$ has a Grothendieck group 
\[
G_0(\SS_\bullet) := \bigoplus_{n\geq0} G_0(\SS_n).
\]
This is a self-dual Hopf algebra with product and coproduct given by induction and restriction of representations. The Frobenius characteristic map ch is defined by sending a simple $S_\lambda$ to the Schur function $s_\lambda$, giving a Hopf algebra isormorphism $G_0(\SS_\bullet)\cong\Sym$. 

There is an analogous result for $H_n(0)$-modules (over an arbitrary field $\FF$). The Grothendieck group of the category of all finitely generated $H_n(0)$-modules is denoted by $G_0(H_n(0))$, and the Grothendieck group of the category of finitely generated projective $H_n(0)$-modules is denoted by $K_0(H_n(0))$. We have 
\[
G_0(H_n(0))=\bigoplus_{\alpha\models n} \ZZ\cdot [\C_\alpha],\quad
K_0(H_n(0))=\bigoplus_{\alpha\models n} \ZZ\cdot [\P_\alpha].
\]
The Grothendieck groups of the tower of algebras $H_\bullet(0): H_0(0)\subset H_1(0)\subset H_2(0)\subset\cdots$ are defined as 
\[
G_0(H_\bullet(0)):=\bigoplus_{n\geq0}G_0(H_n(0)),\quad
K_0(H_\bullet(0)):=\bigoplus_{n\geq0}K_0(H_n(0)).
\]
These two Grothendieck groups are dual Hopf algebras with product and coproduct again given by induction and restriction of representations.  Krob and Thibon~\cite{KrobThibon} defined two Hopf algebra isomorphisms
\[
\mathrm{Ch}: G_0(H_\bullet(0))\cong\QSym,\quad \mathbf{ch}: K_0(H_\bullet(0))\cong \NSym.
\]
The \emph{quasisymmetric characteristic map} Ch sends a finitely generated $H_n(0)$-module $M$ with simple composition factors $\C_{\alpha^{(1)}},\ldots,\C_{\alpha^{(k)}}$ to 
\[ \mathrm{Ch}(M):=F_{\alpha^{(1)}}+\cdots+F_{\alpha^{(k)}}. \]
%This gives an isomorphism of Hopf algebras between the Grothendieck group $G_0$ and the algebra $\QSym$ of quasisymmetric functions. 
Similarly, the \emph{noncommutative characteristic map} $\mathbf{ch}$ sends a finitely generated projective $H_n(0)$-module $M=\P_{\alpha^{(1)}}\oplus\cdots\oplus\P_{\alpha^{(k)}}$ to 
\[
\mathbf{ch}(M):=\bs_{\alpha^{(1)}}+\cdots+\bs_{\alpha^{(k)}}.
\]
Krob and Thibon \cite{KrobThibon} also showed that $\mathrm{Ch}(\P_\alpha)=s_\alpha$, which is the commutative image of $\mathbf{ch}(P_\alpha)=\bs_\alpha$. Thus $\mathrm{Ch}(M)$ is symmetric whenever $M$ is a finitely generated projective $H_n(0)$-module, but not vice versa (e.g. $\C_{12}\oplus\C_{21}$ is nonprojective and has quasisymmetric characteristic equal to $s_{21}$).

If $M=H_n(0)v$ is cyclic then the \emph{length filtration}
\[ H_n(0)^{(\ell)}=\bigoplus_{\ell(w)\geq \ell}\FF\, T_w \]
induces a filtration of $H_n(0)$-modules $M^{(\ell)}=H_n(0)^{(\ell)}v$ for all $\ell\geq0$. This refines $\mathrm{Ch}(M)$ to a \emph{graded characteristic}
\[ \mathrm{Ch}_q(M)=\sum_{\ell\geq0} q^\ell\mathrm{Ch}\left(M^{(\ell)}/M^{(\ell+1)}\right). \]
Taking $M$ to be the regular representation of $H_n(0)$ we have
\[
\mathrm{Ch}_q(H_n(0))=\sum_{w\in \mathfrak S_n}q^{{\rm inv}(w)}
F_{D(w^{-1})}=\sum_\alpha  r_\alpha (q) F_\alpha 
\]
and taking a limit as $q\rightarrow 1$ gives
\[ \mathrm{Ch}(H_n(0))=\sum_\alpha  r_\alpha  F_\alpha. \]

If $M$ has another filtration by $H_n(0)$-modules $M_d$ for $d\geq0$, then one can look at the bifiltration of $H_n(0)$-modules
$M^{(\ell,d)}=M^{(\ell)}\cap M_d$ for $\ell,d\geq0$, and define the \emph{bigraded characteristic} to be
\[
\mathrm{Ch}_{q,t}(M)=\sum_{\ell,d\geq0} q^\ell t^d
\mathrm{Ch} \left(M^{(\ell,d)}/(M^{(\ell+1,d)}+M^{(\ell,d+1)})\right).
\]

If a projective $H_n(0)$-module $M=\P_{\alpha^{(1)}}\oplus\cdots\oplus\P_{\alpha^{(k)}}$ has a grading $\deg\P_{\alpha^{(i)}}=d_i$ then we can define the graded noncommutative characteristic of $M$ to be
\[
\mathbf{ch}_t(M):=t^{d_1}\bs_{\alpha^{(1)}}+\cdots+t^{d_k}\bs_{\alpha^{(k)}}.
\]

\section{Coinvariant algebra of $H_n(0)$}\label{SCoinvariantsA}

In this section we give interpretations of the ribbon and $q$-ribbon numbers by studying the $H_n(0)$-action on its coinvariant algebra.

The symmetric group $\mathfrak S_n$ acts on the polynomial ring $\FF[{\bx}]:=\FF[x_1,\ldots,x_n]$ over an arbitrary field $\FF$ by permuting the variables $x_1,\ldots,x_n$, and hence acts on the coinvariant algebra $\FF[\bx]/(\FF[\bx]^{\SS_n}_+)$ of $\SS_n$, where $(\FF[\bx]^{\SS_n}_+)$ is the ideal generated by symmetric polynomials of positive degree. We often identify the polynomials in $\FF[\bx]$ with their images in the coinvariant algebra of $\SS_n$ in this section.

For $i=1,\ldots,n-1$, let $s_i$ is the adjacent transposition $(i,i+1)$, and define the \emph{Demazure operators} 
\begin{equation}\label{eq:Demazure}
\pi_if:= \frac{x_if-x_{i+1}s_i(f)}{x_i-x_{i+1}},\quad \forall f\in\FF[\bx].
%\quad \forall f \in \ZZ[\bx].
\end{equation}
It follows from this definition that
\begin{itemize}
\item $\pi_i$ satisfy the same relations as $T'_i$, i.e. the relations in (\ref{eq:T'i}),
\item $\deg(\pi_i f)=\deg(f)$ for all homogeneous polynomials $f\in\FF[\bx]$, 
\item $\pi_if=f$ if and only if $s_if=f$ for all $f\in\FF[\bx]$,
\item $\pi_i(fg)=f\pi_i(g)$ for all $f,g\in\FF[\bx]$ satisfying $\pi_if=f$.
\end{itemize}
Hence we have an $H_n(0)$-action on $\FF[\bx]$ by $T'_i\mapsto \pi_i$, or equivalently $T_i\mapsto \overline\pi_i:=\pi_i-1$, for $i=1,\ldots,n-1$. The operators $\pi_w$ and $\overline\pi_w$ are defined similarly as $T'_w$ and $T_w$ for all $w\in\SS_n$. This $H_n(0)$-action preserves the degrees, has the same invariants as $\SS_n$, and is $\FF[{\bx}]^{\mathfrak S_n}$-linear. Thus we have an  $H_n(0)$-action on the \emph{coinvariant algebra of $H_n(0)$}, which is defined as
\[
\FF[\bx]/ ( f\in\FF[\bx]: \deg f>0,\ \pi_i f = f,\ 1\leq i\leq n-1 )
\]
and coincides with the coinvariant algebra of $\SS_n$. 

%$T_w$ to $\overline\pi_w=\overline\pi_{i_1}\cdots\overline\pi_{i_k}$ if $w=s_{i_1}\cdots s_{i_k}$ is a reduced expression. 

To study the $H_n(0)$-action on its coinvariant algebra, we consider certain \emph{Demazure atoms} which behave nicely under the $H_n(0)$-action, i.e. the polynomials $\overline\pi_w x_{D(w)}$ for all $w\in\SS_n$. Here
\[
x_I:=\prod_{i\in I}x_1\cdots x_i.
\] 
for any $I\subseteq[n-1]$. See Mason~\cite{Mason} for more information on the Demazure atoms.

We will see in Lemma~\ref{DemazureAtoms} that the Demazure atoms mentioned above are closely related to the \emph{descent monomials}
\[
wx_{D(w)}=\prod_{i\in D(w)}x_{w(1)}\cdots x_{w(i)},\quad\forall w\in \mathfrak S_n.
\] 
It is well-known that the descent monomials form a basis for the coinvariant algebra of $\SS_n$; see e.g. Garsia~\cite{Garsia} and Steinberg~\cite{Steinberg}. Allen \cite{Allen} provided an elementary proof for this result, which we will adapt to the Demazure atoms $\overline\pi_w x_{D(w)}$. Thus we first recall Allen's proof below.

A \emph{weak composition} is a finite sequence of nonnegative integers. A \emph{partition} is a finite decreasing sequence of nonnegative integers, with zeros ignored sometimes. Every monomial in $\FF[\bx]$ can be written as $x^d=x_1^{d_1}\cdots x_n^{d_n}$ where $d=(d_1,\ldots,d_n)$ is a weak composition. Denote by $\lambda(d)$ the unique partition obtained from rearranging the weak composition $d$. Given two monomials $x^d$ and $x^e$, write $x^d\prec x^e$ or $d\prec e$ if $\lambda(d)<_L\lambda(e)$, and write $x^d<_{ts}x^e$ if (i) $\lambda(d)<_L\lambda(e)$ or (ii) $\lambda(d)=\lambda(e)$ and $d<_Le$, where ``$<_L$'' is the lexicographic order.

Given a weak composition $d=(d_1,\ldots,d_n)$, we have a permutation $\sigma(d)\in\SS_n$ obtained by labelling $d_1,\ldots,d_n$ from the largest to the smallest, breaking ties from left to right. Construct a weak composition $\gamma(d)$ from this labelling as follows. First replace the largest label with $0$, and recursively, if the label $t$ has been replaced with $s$, then replace $t-1$ with $s$ if it is to the left of $t$, or with $s+1$ otherwise. Let $\mu(d)=d-\gamma(d)$ be the
component-wise difference. For example,
\[
d=(3,1,3,0,2,0),\quad%\textrm{labeled as}\quad
\sigma(d)=(1,4,2,5,3,6),
\]
 \[
 \gamma(d)=(1,0,1,0,1,0),\quad
 \mu(d)=(2,1,2,0,1,0).
\]
The decomposition $d=\gamma(d)+\mu(d)$ is the usual \emph{$P$-partition encoding} of $d$ (see e.g. Stanley~\cite{Stanley}), and $x^{\gamma(d)}$ is the descent monomial of $\sigma(d)^{-1}$.  E.E. Allen \cite[Proposition~2.1]{Allen} showed that $w\mu(d)+\gamma(d)<_{ts} d$ for all $w\in\SS_n$ unless $w=1$, and thus
\begin{equation}\label{eq:Allen}
m_{\mu(d)}\cdot x^{\gamma(d)} = x^d +\sum_{x^e<_{ts}x^d} c_e x^e,\quad c_e\in\ZZ,
\end{equation}
where $m_{\mu(d)}$ is the monomial symmetric function corresponding to $\mu(d)$, i.e. the sum of the monomials in the $\SS_n$-orbit of $x^{\mu(d)}$. 
%Conversely, if $x^\gamma$ is a descent monomial of some $\sigma^{-1}\in\SS_n$, and $\mu=(\mu_1\geq\cdots\geq\mu_n\geq0)$ is a partition, then we can write
%\[ m_\mu\cdot x^\gamma=x^d+\sum_{e<_{ts}d}c_ex^e,\quad c_e\in\ZZ, \]
%where $d=\gamma+\sigma^{-1}\mu$, $\sigma(d)=\sigma$, $\gamma(d)=\gamma$, and $\mu(d)=\sigma^{-1}\mu$. 
It follows that
\[
\left\{ m_\mu\cdot wx_{D(w)}:\mu=(\mu_1\geq\cdots\geq\mu_n\geq0),\ w\in\SS_n \right\}
\]
is triangularly related to the set of all monomials $x^d$, and thus an $\FF$-basis for $\FF[\bx]$. Therefore the descent monomials form an $\FF[\bx]^{\SS_n}$-basis for $\FF[\bx]$ and give an $\FF$-basis for $\FF[\bx]/(\FF[\bx]^{\SS_n}_+)$.

Now we investigate the relation between our Demazure atoms and the descent monomials. First observe that if $m$ is a monomial not containing $x_i$ and $x_{i+1}$, then 
\begin{equation}\label{pi}
\overline\pi_i(mx_i^ax_{i+1}^b)=\left\{\begin{array}{ll}
m(x_i^{a-1}x_{i+1}^{b+1}+x_i^{a-2}x_{i+1}^{b+2}\cdots +x_i^bx_{i+1}^a), & {\rm if}\ a>b,\\
0, &{\rm if}\ a=b,\\
-m(x_i^ax_{i+1}^b-x_i^{a+1}x_{i+1}^{b-1}-\cdots-x_i^{b-1}x_{i+1}^{a+1}), & {\rm if}\ a<b.
\end{array}\right.
\end{equation}

\begin{lemma}\label{DemazureAtoms}
Suppose that $\alpha$ is a composition of $n$ and $w$ is a permutation in $\SS_n$ with $D(w)\subseteq D(\alpha)$. Then
\[
\overline\pi_w x_{D(\alpha)} =w x_{D(\alpha)} +\sum_{x^d\prec x_{D(\alpha)}}c_dx^d, \quad c_d\in\ZZ.
\]
Moreover, $w x_{D(\alpha)}$ is a descent monomial if and only if $D(w)=D(\alpha)$.
\end{lemma}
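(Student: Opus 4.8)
The plan is to establish the two claims in turn: the displayed congruence by induction on $\ell(w)$, stripping off one left descent at a time, and the descent-monomial characterization by comparing multisets of exponents. First set up notation. Write $x_{D(\alpha)}=x^\lambda$ where $\lambda=(\lambda_1\ge\cdots\ge\lambda_n\ge 0)$ is given by $\lambda_k=\#\{i\in D(\alpha):i\ge k\}$, and let $B_1,\dots,B_\ell$ be the \emph{blocks} of $\alpha$, the consecutive intervals of $[n]$ of sizes $\alpha_1,\dots,\alpha_\ell$. Then $\lambda$ is constant on each block and takes the strictly decreasing values $\ell-1,\ell-2,\dots,0$ on $B_1,\dots,B_\ell$; in particular $\lambda_k=\lambda_{k'}$ precisely when $k$ and $k'$ lie in a common block, and the hypothesis $D(w)\subseteq D(\alpha)$ says exactly that $w$ is increasing along each block $B_j$.

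For the triangularity I would induct on $\ell(w)$, the case $w=e$ being immediate. Suppose $\ell(w)\ge 1$ and $D(w)\subseteq D(\alpha)$; pick a left descent $s_i$ of $w$ and set $w'=s_iw$, so $\overline\pi_w=\overline\pi_i\overline\pi_{w'}$ and $\ell(w')=\ell(w)-1$. One first checks that $D(w')\subseteq D(\alpha)$ again: since $s_i$ is a left ascent of $w'$, the positions $p:=(w')^{-1}(i)$ and $q:=(w')^{-1}(i+1)$ satisfy $p<q$, while in $w$ the value $i+1$ sits at position $p$ and the value $i$ at position $q$; if $p$ and $q$ lay in one block, monotonicity of $w$ there would give $i+1=w(p)<w(q)=i$, which is absurd, so $p$ and $q$ lie in distinct blocks, and a short check then shows $w'$ is still increasing along every block. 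By the inductive hypothesis $\overline\pi_{w'}x_{D(\alpha)}=w'x_{D(\alpha)}+\sum_{x^d\prec x_{D(\alpha)}}c_d x^d$ with $c_d\in\ZZ$, so after applying $\overline\pi_i$ the claim comes down to two points. (a) For any monomial $x^d$, formula (\ref{pi}) writes $\overline\pi_i(x^d)$ as an integer combination of monomials obtained from $x^d$ by replacing its $(x_i,x_{i+1})$-exponent pair by one with the same sum and no larger maximum, and a routine check shows such a replacement cannot raise the sorted exponent vector in the lexicographic order; this disposes of the error terms. (b) The leading term: because $p$ and $q$ lie in distinct blocks with $p<q$, the $x_i$-exponent $a:=\lambda_p$ of $w'x_{D(\alpha)}$ is \emph{strictly} greater than its $x_{i+1}$-exponent $b:=\lambda_q$, so the case $a>b$ of (\ref{pi}) expands $\overline\pi_i(w'x_{D(\alpha)})$ as a sum whose extreme term is $s_i(w'x_{D(\alpha)})=(s_iw')x_{D(\alpha)}=wx_{D(\alpha)}$ and all of whose remaining terms replace the pair $(a,b)$ by some $(a',b')$ with $a'+b'=a+b$ and $\max(a',b')<a$, which strictly lowers the sorted exponent vector lexicographically and so lies $\prec x_{D(\alpha)}$. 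Combining (a) and (b) completes the induction, and the argument is valid over $\ZZ$.

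For the characterization, if $D(w)=D(\alpha)$ then $wx_{D(\alpha)}=wx_{D(w)}$ is the descent monomial of $w$ by definition. Conversely, suppose $wx_{D(\alpha)}=ux_{D(u)}$ for some $u\in\SS_n$. The exponent multiset of $x_{D(\alpha)}=x^\lambda$ is $\{\lambda_1,\dots,\lambda_n\}$, and that of $ux_{D(u)}$ is $\{\nu_1,\dots,\nu_n\}$ with $\nu_k=\#\{j\in D(u):j\ge k\}$; since $\lambda$ and $\nu$ are both weakly decreasing sequences of length $n$, equality of multisets forces $\lambda=\nu$, whence $D(u)=\{k:\lambda_k>\lambda_{k+1}\}=D(\alpha)$. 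Thus $wx_{D(\alpha)}=ux_{D(\alpha)}$, i.e.\ $\lambda_{w^{-1}(j)}=\lambda_{u^{-1}(j)}$ for all $j$; as $\lambda$ takes distinct values on distinct blocks, this forces $u^{-1}w$ to map each position into its own block, so $v:=w^{-1}u\in W_{D(\alpha)}$. Finally $D(u)=D(\alpha)$ makes $u$ increasing along every block (its descents lie only at block boundaries), as is $w$; writing $u=wv$ with $v$ permuting within each block, monotonicity of both $u$ and $w$ along $B_j$ forces $v$ to restrict to the identity on $B_j$ for every $j$, so $v=\mathrm{id}$ and $u=w$. Hence $D(w)=D(u)=D(\alpha)$.

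I expect the main obstacle to be securing the strict inequality $a>b$ in step (b): if it failed, $\overline\pi_i$ would annihilate the candidate top monomial $w'x_{D(\alpha)}$ and the induction would collapse. This is precisely where the hypothesis $D(w)\subseteq D(\alpha)$---rather than the weaker $D(w)\subseteq[n-1]$---is used essentially, and it is what ties the statement to the particular monomials $x_{D(\alpha)}$.
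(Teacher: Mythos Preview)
Your argument is correct and mirrors the paper's proof: induct on $\ell(w)$ and peel off a left descent via formula~(\ref{pi}) for the triangularity (the paper phrases the inductive check as the general fact $D(u)\subseteq D(w)$ when $w=s_ju$ with $\ell(u)<\ell(w)$, rather than your block argument, but these amount to the same thing), and compare exponent multisets for the second assertion. One notational slip: the block-preserving parabolic you denote $W_{D(\alpha)}$ is $W_{D(\alpha^c)}$ in the paper's convention, i.e.\ the Young subgroup generated by the $s_i$ with $i\notin D(\alpha)$.
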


\begin{proof}
We prove the first assertion by induction on $\ell(w)$. If $\ell(w)=0$ then we are done; otherwise we assume $w=s_ju$ for some $j\in[n-1]$ and some $u\in\SS_n$ with $\ell(u)<\ell(w)$. Since $D(u)\subseteq D(w)\subseteq D(\alpha)$, one has
\begin{equation}\label{eq1}
\overline\pi_u x_{D(\alpha)} =ux_{D(\alpha)} +\sum_{x^d\prec x_{D(\alpha)} }c_dx^d, \quad c_d\in\ZZ.
\end{equation}
It follows from (\ref{pi}) that
\begin{equation}\label{eq2}
\overline\pi_j(x^d)=\sum_{x^e\preceq x^d}a_e x^e, \quad a_e\in\ZZ.
\end{equation}
Observe that the degree of $x_k$ in $ux_{D(\alpha)}$ is 
\[ r_k=\#\{i\in D(\alpha): u^{-1}(k)\leq i\}. \]
Since $\ell(s_ju)>\ell(u)$, we have $u^{-1}(j)<u^{-1}(j+1)$ and thus $r_j\geq r_{j+1}$. Since $(s_ju)^{-1}(j+1)<(s_ju)^{-1}(j)$, there exists an $i\in D(s_ju)\subseteq D(\alpha)$ such that 
\[
u^{-1}(j)=(s_ju)^{-1}(j+1)\leq i<(s_ju)^{-1}(j)=u^{-1}(j+1).
\]
Thus $r_j>r_{j+1}$. It follows from (\ref{pi}) that
\begin{equation}\label{eq3}
\overline\pi_j(ux_{D(\alpha)} )=s_jux_{D(\alpha)} +\sum_{x^e\prec x_{D(\alpha)} } b_ex^e, \quad b_e\in\ZZ.
\end{equation}
Combining (\ref{eq1}), (\ref{eq2}), and (\ref{eq3}) proves the first assertion.

If $D(w)=D(\alpha)$ then $wx_{D(\alpha)}$ is the descent monomial of $w$. Conversely, assume $wx_{D(\alpha)}$ equals the descent monomial of some $u\in W$, i.e.
\[
\prod_{i\in D(\alpha)}x_{w(1)}\cdots x_{w(i)}=\prod_{j\in D(u)} x_{u(1)}\cdots x_{u(j)}.
\]
Let $D(\alpha)=\{i_1,\ldots,i_k\}$ and $D(u)=\{j_1,\ldots,j_t\}$. Comparing the variables absent on both sides of the above equality, one sees that $i_k=j_t$ and $w(i)=u(i)$ for $i=i_k+1,\ldots,n$. Repeat this argument for the variables appearing exactly $m$ times, $m=1,2,\ldots,$ one sees that $D(\alpha)=D(u)$ and $w=u$.
\end{proof}

\begin{remark}\label{rem:atom}
Using the combinatorial formula by Mason~\cite{Mason} for the Demazure atoms, one can check that $\overline\pi_{w_0(\alpha)}x_{D(\alpha)}$ and $\overline\pi_{w_1(\alpha)}x_{D(\alpha)}$ are precisely the descent monomials of $w_0(\alpha)$ and $w_1(\alpha)$.
\end{remark}

\begin{lemma}\label{Straightening}
For any weak composition $d=(d_1,\ldots,d_n)$, let $\sigma=\sigma(d)$, $\gamma=\gamma(d)$, $\mu=\mu(d)$. If $c_\beta\in\ZZ$ for all $\beta\prec\gamma$ then 
\[
m_\mu\cdot \left(x^\gamma+ \sum_{\beta\prec \gamma} c_\beta x^\beta\right) = x^d +\sum_{x^e<_{ts}x^d} b_e x^e,\quad b_e\in\ZZ.
\]
\end{lemma}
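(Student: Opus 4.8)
The plan is to reduce Lemma~\ref{Straightening} to Allen's identity (\ref{eq:Allen}) by carefully controlling what happens when the extra lower-order terms $c_\beta x^\beta$ (with $\beta\prec\gamma$) are multiplied by $m_\mu$. The key observation is that $m_\mu$ is a sum of monomials $x^{w\mu}$ over the distinct rearrangements of $\mu$, so $m_\mu\cdot x^\gamma$ equals Allen's expression, and $m_\mu\cdot x^\beta=\sum_{w} x^{w\mu+\beta}$ where $w$ ranges over a set of coset representatives for the stabilizer of $\mu$. So I would first write
\[
m_\mu\cdot\left(x^\gamma+\sum_{\beta\prec\gamma}c_\beta x^\beta\right) = \left(x^d+\sum_{x^e<_{ts}x^d}c_e x^e\right) + \sum_{\beta\prec\gamma}c_\beta\sum_{w} x^{w\mu+\beta},
\]
using (\ref{eq:Allen}) for the first summand. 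The whole lemma then amounts to showing that every monomial $x^{w\mu+\beta}$ appearing in the second (correction) summand satisfies $x^{w\mu+\beta}<_{ts}x^d$.

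The heart of the matter is therefore the following claim: if $\beta\prec\gamma$ (that is, $\lambda(\beta)<_L\lambda(\gamma)$) then $w\mu+\beta <_{ts} d$ for every $w\in\SS_n$. I would prove this by comparing $\lambda(w\mu+\beta)$ with $\lambda(d)=\lambda(\gamma+\mu)$ (where the second equality is just $d=\gamma+\mu$ componentwise, but note $\gamma$ and $\mu$ need not be partitions). First I would handle the case $\lambda(\beta)<_L\lambda(\gamma)$ strictly: here one wants $\lambda(w\mu+\beta)<_L\lambda(\gamma+\mu)=\lambda(d)$, which would give $x^{w\mu+\beta}\prec x^d$, hence $<_{ts}$. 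The mechanism is that the partition rearrangement of a sum is dominated (in the appropriate sense giving the lex inequality) by the sum of the partition rearrangements, and that decreasing $\gamma$ to something with strictly smaller $\lambda$ in lex order cannot be compensated by repermuting $\mu$. This is essentially Allen's Proposition~2.1 argument but with $\gamma$ replaced by an arbitrary $\beta$ with smaller $\lambda$-value; I expect to either cite \cite{Allen} directly or mimic that computation.

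The main obstacle I anticipate is the edge case where $\lambda(\beta)=\lambda(\gamma)$ — but this cannot happen, since $\beta\prec\gamma$ was defined to mean $\lambda(\beta)<_L\lambda(\gamma)$ strictly, so the relation $\prec$ never includes ties; this is a genuine simplification and I should double-check it against the definition of $\prec$ given just before (\ref{eq:Allen}). Assuming that, the only real work is the lex-order estimate on partition rearrangements of sums of weak compositions, and the bookkeeping that the monomials $x^{w\mu+\beta}$ landing in the correction term all have $\lambda$-value strictly below $\lambda(d)$, hence are $<_{ts}x^d$ regardless of their own lex position. I would then conclude by collecting all terms: the leading term $x^d$ appears once (from $m_\mu\cdot x^\gamma$), and everything else is $<_{ts}x^d$, which is exactly the assertion. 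A minor secondary point to verify carefully is that the $b_e$ are integers, which is immediate since all $c_e, c_\beta$ and the structure constants of $m_\mu$ against monomials are integers.
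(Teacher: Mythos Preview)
Your proposal is correct and follows essentially the same route as the paper: reduce to Allen's identity (\ref{eq:Allen}) for the term $m_\mu\cdot x^\gamma$, and then show that every correction monomial $x^{w\mu+\beta}$ with $\beta\prec\gamma$ satisfies $w\mu+\beta\prec d$ (hence $<_{ts}d$). The paper carries out the step you leave as a sketch by using the key fact that $\sigma$ simultaneously sorts $\mu$, $\gamma$, and $d$, so that $\lambda(d)_i=\lambda(\mu)_i+\lambda(\gamma)_i$; combined with $\lambda(w\mu+\beta)_i\le\lambda(\mu)_i+\lambda(\beta)_i$ and the strict drop $\lambda(\beta)_k<\lambda(\gamma)_k$ at the first index of disagreement, this yields $\lambda(w\mu+\beta)<_L\lambda(d)$.
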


\begin{proof}
Since we already have (\ref{eq:Allen}), it suffices to show that $w\mu+\beta\prec d$ for all permutations $w$ in $\mathfrak S_n$ and all $\beta\prec \gamma$. Given a weak composition $\alpha$, let $\alpha_i$ be its $i$-th part. Since $\sigma\mu$ and $\sigma\gamma$ are both weakly decreasing, one has $\lambda(\mu)_i+\lambda(\gamma)_i=\lambda(d)_i$ for all $i=1,\ldots,n$. Since $\beta\prec\gamma$, there exists a unique integer $k$ such that $\lambda(\beta)_i=\lambda(\gamma)_i$ for $i=1,\ldots,k-1$, and $\lambda(\beta)_k<\lambda(\gamma)_k.$ Then for all $i\in[k-1]$, 
\begin{eqnarray*}
\lambda(w\mu+\beta)_i
&\leq& \lambda(\mu)_i+\lambda(\beta)_i \\
&=& \lambda(\mu)_i+\lambda(\gamma)_i\\
&=&\lambda(d)_i
\end{eqnarray*}
and
\begin{eqnarray*}
\lambda(w\mu+\beta)_k
&\leq& \lambda(\mu)_k+\lambda(\beta)_k\\
&<& \lambda(\mu)_k+\lambda(\gamma)_k \\
&=& \lambda(d)_k.
\end{eqnarray*}
Therefore $w\mu+\beta\prec d$ and we are done.
\end{proof}

\begin{lemma}\label{lem:basis}
The coinvariant algebra $\FF[\bx]/(\FF[\bx]^{\SS_n}_+)$ has a basis given by $\{f_w:w\in\SS_n\}$, if 
\[
f_w = wx_{D(w)}+\sum_{x^d\prec x_{D(\alpha)}}c_dx^d,\quad c_d\in\FF,\quad \forall w\in\SS_n.
\]
\end{lemma}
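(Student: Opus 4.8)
The plan is to show that the given family $\{f_w : w \in \SS_n\}$ spans the coinvariant algebra, since it has the right cardinality ($|\SS_n| = n!$, which equals the dimension of the coinvariant algebra over any field by the Chevalley--Shephard--Todd result, or more elementarily by the descent monomial basis already established in the excerpt). Equivalently, I will prove that the set $\{m_\mu \cdot f_w : \mu = (\mu_1 \geq \cdots \geq \mu_n \geq 0),\ w \in \SS_n\}$ is an $\FF$-basis for the full polynomial ring $\FF[\bx]$; restricting modulo $(\FF[\bx]^{\SS_n}_+)$ then kills all terms with $\mu \neq 0$ and leaves exactly $\{f_w\}$ spanning (hence, by dimension count, forming a basis of) the coinvariant algebra.

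The key mechanism is the triangularity already packaged in Lemma~\ref{Straightening}. First I would index things by monomials: every monomial $x^d$ in $\FF[\bx]$ determines the data $\sigma = \sigma(d)$, $\gamma = \gamma(d)$, $\mu = \mu(d)$ with $d = \gamma + \mu$, where $x^\gamma$ is the descent monomial of $\sigma^{-1}$, i.e.\ $x^\gamma = \sigma^{-1} x_{D(\sigma^{-1})}$. Setting $w = \sigma^{-1}$, the hypothesis on $f_w$ says precisely that $f_w = x^\gamma + \sum_{x^\beta \prec x_{D(w)}} c_\beta x^\beta$ with $c_\beta \in \FF$; note $x^\gamma = w x_{D(w)}$ and $x_{D(w)} = x_{D(\alpha)}$ with $\alpha$ the composition corresponding to $D(w)$, so the $\prec$-condition matches what Lemma~\ref{Straightening} needs (any $\beta \prec \gamma$ automatically satisfies $\lambda(\beta) <_L \lambda(\gamma) \le_L \lambda(x_{D(w)})$... more carefully, the leading term's partition; I will just invoke that the stated error terms are $\prec \gamma$, which is what matters). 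Then Lemma~\ref{Straightening} (with the coefficients $c_\beta$ now in $\FF$ rather than $\ZZ$, which changes nothing) gives
\[
m_\mu \cdot f_w = x^d + \sum_{x^e <_{ts} x^d} b_e x^e, \quad b_e \in \FF.
\]
Thus the map $x^d \mapsto m_{\mu(d)} \cdot f_{\sigma(d)^{-1}}$ is a bijection between monomials and our proposed set, and it is unitriangular with respect to the total order $<_{ts}$ on monomials (which is a genuine total order refining the partial order $\prec$, and for which every monomial has only finitely many predecessors among monomials of the same degree — the action respects degree since $\deg(m_\mu \cdot f_w) = \deg(x^d)$). A unitriangular change of basis over any ring with unit is invertible, so $\{m_\mu \cdot f_w\}$ is an $\FF$-basis of $\FF[\bx]$.

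To finish, I would observe that $\FF[\bx]$ is a free module over $\FF[\bx]^{\SS_n}$ with the monomial symmetric functions $m_\mu$ (for partitions $\mu$ with at most $n$ parts) spanning a complement... more precisely, the standard fact is that $\{m_\mu\}$ as $\mu$ ranges over partitions with $\le n$ parts is an $\FF[\bx]^{\SS_n}_+$-module... I will phrase it as: the elements $m_\mu \cdot f_w$ with $\mu \neq 0$ all lie in the ideal $(\FF[\bx]^{\SS_n}_+)$ (since a nonzero $m_\mu$ is a symmetric polynomial of positive degree, and $m_\mu = e_1 m_\mu$-type reductions aside, $m_\mu \in \FF[\bx]^{\SS_n}_+$ directly when $\mu \neq 0$), whereas the $m_0 \cdot f_w = f_w$ are candidates for a basis of the quotient. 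Since $\{m_\mu \cdot f_w\}$ is a basis of $\FF[\bx]$ and the subset with $\mu \neq 0$ spans a subspace of $(\FF[\bx]^{\SS_n}_+)$, passing to the quotient the images $\{f_w : w \in \SS_n\}$ span $\FF[\bx]/(\FF[\bx]^{\SS_n}_+)$; and since they number $n!$, which is the $\FF$-dimension of the coinvariant algebra (established via the descent monomial basis earlier in this section), they are linearly independent, hence a basis.

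The main obstacle is the last step: verifying that the span of $\{m_\mu \cdot f_w : \mu \neq 0\}$ is exactly (or at least contained in, which is all that is needed) the ideal $(\FF[\bx]^{\SS_n}_+)$, and complementarily that $\dim_\FF \FF[\bx]/(\FF[\bx]^{\SS_n}_+) = n!$ over an arbitrary field. The containment direction is easy since each $m_\mu$ with $\mu \neq 0$ is manifestly a positive-degree symmetric polynomial, so $m_\mu \cdot f_w \in (\FF[\bx]^{\SS_n}_+)$; combined with the fact that the full set is an $\FF$-basis of $\FF[\bx]$, this forces the $f_w$ to span the quotient. The dimension equality $n!$ then follows because the descent monomials (which the excerpt recalls form a basis of the coinvariant algebra over any field, via Allen's argument and equation~\eqref{eq:Allen}) number exactly $n!$. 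I would present the argument in exactly that order: (1) rewrite the hypothesis on $f_w$ in terms of $\gamma(d), \mu(d)$; (2) apply Lemma~\ref{Straightening} to get $<_{ts}$-unitriangularity of $\{m_\mu f_w\}$ against the monomial basis of $\FF[\bx]$; (3) conclude $\{m_\mu f_w\}$ is an $\FF$-basis of $\FF[\bx]$; (4) project modulo $(\FF[\bx]^{\SS_n}_+)$, kill the $\mu \neq 0$ terms, and invoke the $n!$ count to upgrade ``spanning'' to ``basis''.
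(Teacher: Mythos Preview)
Your proposal is correct and follows essentially the same approach as the paper: apply Lemma~\ref{Straightening} to conclude that $\{m_\mu f_w\}$ is unitriangularly related to the monomial basis under $<_{ts}$, hence an $\FF$-basis of $\FF[\bx]$, and then pass to the quotient. The only minor difference is in the final step: the paper phrases the conclusion as ``$\{f_w\}$ is an $\FF[\bx]^{\SS_n}$-basis for $\FF[\bx]$'' (using that the $m_\mu$ form an $\FF$-basis of $\FF[\bx]^{\SS_n}$), which yields the basis of the coinvariant algebra directly without a separate dimension count, whereas you argue spanning plus the $n!$ count from the descent monomials---both are valid.
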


\begin{proof}
Given a weak composition $d=(d_1,\ldots,d_n)$, let $\gamma=\gamma(d)$, $\mu=\mu(d)$, and $\sigma=\sigma(d)$. Then $x^\gamma$ is the descent monomial of $\sigma^{-1}$. By Lemma~\ref{Straightening},
\[
m_\mu f_{\sigma^{-1}}=x^d+\sum_{x^e<_{ts}x^d}b_ex^e.
\]
Hence $\{m_\mu f_w:\mu=(\mu_1\geq\cdots\geq\mu_n\geq0),\ w\in\SS_n\}$ is triangularly related to the set of all monomials $x^d$, and thus a basis for $\FF[\bx]$. It follows that  $\{f_w:w\in\mathfrak S_n\}$ is an $\FF[{\bx}]^{\mathfrak S_n}$-basis for $\FF[{\bx}]$ and gives an $\FF$-basis for $\FF[{\bx}]/(\FF[{\bx}]^{\mathfrak S_n}_+)$. 
\end{proof}

\begin{theorem}\label{CoinvariantsA}
The coinvariant algebra $\FF[{\bx}]/(\FF[\bx]^{\SS_n}_+)$ has an $\FF$-basis $\left\{\overline\pi_w x_{D(w)} :w\in\mathfrak S_n \right\}$ and decomposes as
\[
\FF[{\bx}]/(\FF[\bx]^{\SS_n}_+)=\bigoplus_{\alpha\models n} H_n(0)\cdot\overline\pi_{w_0(\alpha)} x_{D(\alpha)}
\]
where $H_n(0)\cdot\overline\pi_{w_0(\alpha)} x_{D(\alpha)}$ has a basis $\left\{\overline\pi_w x_{D(\alpha)} :w\in[w_0(\alpha),w_1(\alpha)]\right\}$, and is isomorphic to the projective indecomposable $H_n(0)$-module $\P_\alpha$, for all $\alpha\models n$. Consequently, $\FF[{\bx}]/(\FF[{\bx}]^{\mathfrak S_n}_+)$ is isomorphic to the regular representation of $H_n(0)$.
\end{theorem}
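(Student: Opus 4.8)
The plan is to establish the theorem in three stages: first prove that the proposed Demazure atoms $\{\overline\pi_w x_{D(w)} : w \in \SS_n\}$ form an $\FF$-basis of the coinvariant algebra; second, show that for each composition $\alpha$ the subspace $H_n(0) \cdot \overline\pi_{w_0(\alpha)} x_{D(\alpha)}$ has the claimed basis and is isomorphic to $\P_\alpha$; and third, assemble these into the direct-sum decomposition, which by Norton's decomposition~(\ref{eq:Norton}) exhibits the coinvariant algebra as the regular representation.

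For the first stage, I would apply Lemma~\ref{lem:basis} directly: by Lemma~\ref{DemazureAtoms}, each $\overline\pi_w x_{D(w)}$ has the form $w x_{D(w)} + \sum_{x^d \prec x_{D(w)}} c_d x^d$ with $c_d \in \ZZ$, so taking $f_w = \overline\pi_w x_{D(w)}$ in Lemma~\ref{lem:basis} shows these elements descend to an $\FF$-basis of $\FF[\bx]/(\FF[\bx]^{\SS_n}_+)$. This also tells us $\dim_\FF \FF[\bx]/(\FF[\bx]^{\SS_n}_+) = |\SS_n| = n!$, matching $\dim_\FF H_n(0)$.

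For the second stage, fix $\alpha \models n$. The key point is that the operator action $T_i \mapsto \overline\pi_i$ behaves on $x_{D(\alpha)}$ exactly as on the ribbon tableaux basis of $\P_\alpha$: when we start at $\overline\pi_{w_0(\alpha)} x_{D(\alpha)}$ and apply the generators, formula~(\ref{pi}) combined with the degree computation $r_k = \#\{i \in D(\alpha) : w^{-1}(k) \le i\}$ from the proof of Lemma~\ref{DemazureAtoms} shows that $\overline\pi_i$ sends $\overline\pi_w x_{D(\alpha)}$ to $\overline\pi_{s_i w} x_{D(\alpha)}$ (leading term $s_i w\, x_{D(\alpha)}$) when $\ell(s_i w) > \ell(w)$ and $s_i w \cleq w_1(\alpha)$, to $-\overline\pi_w x_{D(\alpha)}$ when $i \in D(w)$, and to $0$ when $i \notin D(w)$ but $s_i w \not\cleq w_1(\alpha)$ (i.e.\ the $a = b$ case, which occurs exactly when $r_i = r_{i+1}$). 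This matches~(\ref{ActOnTableaux}) under the bijection $\tau \leftrightarrow w(\tau)$ between $\SYT(\alpha)$ and the interval $[w_0(\alpha), w_1(\alpha)]$, so $H_n(0)\cdot \overline\pi_{w_0(\alpha)} x_{D(\alpha)}$ is spanned by $\{\overline\pi_w x_{D(\alpha)} : w \in [w_0(\alpha), w_1(\alpha)]\}$; these are linearly independent since they are a subset of the basis from the first stage (using Remark~\ref{rem:atom} to identify $\overline\pi_{w_0(\alpha)} x_{D(\alpha)}$ with a genuine descent monomial, and Lemma~\ref{DemazureAtoms}'s triangularity to get independence for general $w$ in the interval). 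Thus the module is isomorphic to $\P_\alpha$.

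For the third stage: the bases of the summands $H_n(0)\cdot \overline\pi_{w_0(\alpha)} x_{D(\alpha)}$ over all $\alpha \models n$ are pairwise disjoint and their union is precisely $\{\overline\pi_w x_{D(w)} : w \in \SS_n\}$, since every $w \in \SS_n$ lies in exactly one interval $[w_0(\alpha), w_1(\alpha)]$ (namely for $\alpha$ with $D(\alpha) = D(w)$), and on that interval the atom $\overline\pi_w x_{D(\alpha)}$ is the same as $\overline\pi_w x_{D(w)}$. Hence the sum is direct and exhausts the whole coinvariant algebra, giving $\FF[\bx]/(\FF[\bx]^{\SS_n}_+) \cong \bigoplus_{\alpha \models n} \P_\alpha \cong H_n(0)$ as $H_n(0)$-modules by~(\ref{eq:Norton}). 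The main obstacle I anticipate is the careful bookkeeping in the second stage --- verifying that the three cases of~(\ref{pi}) line up with the three cases of~(\ref{ActOnTableaux}), in particular pinning down exactly when the degrees $r_i$ and $r_{i+1}$ are equal versus strictly ordered, and confirming that no leading term is lost (the triangularity from Lemma~\ref{DemazureAtoms} guarantees the correct leading monomial survives, but one must check the operator never produces a cancellation that drops us out of the span).
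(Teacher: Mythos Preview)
Your overall strategy matches the paper's, and Stages~1 and~3 are correct as written. Stage~2 contains two issues worth flagging.

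First, a notational slip: the case $\overline\pi_i\,\overline\pi_w x_{D(\alpha)}=-\overline\pi_w x_{D(\alpha)}$ occurs when $i$ is a \emph{left} descent of $w$ (i.e.\ $\ell(s_iw)<\ell(w)$, equivalently $i\in D(w^{-1})$), not when $i\in D(w)$ as you wrote; this follows purely from the operator relation $\overline\pi_i^2=-\overline\pi_i$.

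Second, and this is the real gap: your argument for the vanishing case is not valid as stated. You justify $\overline\pi_i\,\overline\pi_w x_{D(\alpha)}=0$ via ``the $a=b$ case'' of~(\ref{pi}) when $r_i=r_{i+1}$, but~(\ref{pi}) and the degrees $r_k$ refer only to the leading monomial $w x_{D(\alpha)}$ of the polynomial $\overline\pi_w x_{D(\alpha)}$; killing the leading term does not show the full polynomial is annihilated. The paper sidesteps this by working at the operator level rather than monomial-by-monomial: for any $u\in\SS_n$, the $0$-Hecke relations give $\overline\pi_u\overline\pi_{w_0(\alpha)}=\pm\overline\pi_w$ for some $w\ge w_0(\alpha)$ in left weak order, hence $D(w)\supseteq D(\alpha)$; and if the inclusion is strict, pick $j\in D(w)\setminus D(\alpha)$ and write $\overline\pi_w x_{D(\alpha)}=\overline\pi_{ws_j}\overline\pi_j x_{D(\alpha)}=0$, since $\overline\pi_j x_{D(\alpha)}=0$ (here~(\ref{pi}) \emph{does} apply directly, to the single monomial $x_{D(\alpha)}$, whose $x_j$- and $x_{j+1}$-degrees coincide because $j\notin D(\alpha)$). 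This immediately gives the spanning set $\{\overline\pi_w x_{D(\alpha)}:w\in[w_0(\alpha),w_1(\alpha)]\}$, after which linear independence (your argument, as a subset of the Stage~1 basis) and the explicit map $\overline\pi_w x_{D(\alpha)}\mapsto T_wT'_{w_0(\alpha^c)}$ finish the proof---without the case-by-case matching against~(\ref{ActOnTableaux}) you were anticipating as the main obstacle.
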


\begin{proof}
By Lemma~\ref{DemazureAtoms} and Lemma~\ref{lem:basis}, $\{\overline\pi_w x_{D(w)}:w\in\mathfrak S_n\}$ is a basis for $\FF[{\bx}]/(\FF[\bx]^{\SS_n}_+)$. For any permutation $u$ in $\mathfrak S_n$, one sees from the relations of $\overline\pi_i$ that $\overline\pi_u\overline\pi_{w_0(\alpha)}=\pm\overline\pi_w$ for some $w\geq w_0(\alpha)$ in the left weak order, which implies $D(w)\supseteq D(\alpha)$. If there exists $j\in D(w)\setminus D(\alpha)$, then 
\[
\overline\pi_w x_{D(\alpha)} = \overline\pi_{ws_j}\overline\pi_jx_{D(\alpha)} =0.
\]
Hence $H_n(0)\cdot\overline\pi_{w_0(\alpha)} x_{D(\alpha)}$ is spanned by $\{\overline\pi_w x_{D(\alpha)} :w\in[w_0(\alpha),w_1(\alpha)]\}$, which must be an $\FF$-basis since it is a subset of a linearly independent set. Sending $\overline\pi_w x_{D(\alpha)} $ to $T_wT'_{w_0(\alpha^c)}$ for all $w\in[w_0(\alpha),w_1(\alpha)]$ gives an isomorphism between $ H_n(0)\cdot\overline\pi_{w_0(\alpha)} x_{D(\alpha)}$ and $\P_\alpha$.
\end{proof}

\begin{remark}
(i) This theorem and its proof are valid when $\FF$ is replaced with $\ZZ$. 

\noindent(ii) By Remark~\ref{rem:atom}, the cyclic generators $\overline\pi_{w_0(\alpha)} x_{D(\alpha)}$ for the indecomposable summands of the coinvariant algebra are precisely the descent monomials $w_0(\alpha) x_{D(\alpha)}$.
\end{remark}

\begin{theorem}[Lusztig-Stanley]
%The graded multiplicity of the simple $\SS_n$-module $S_\lambda$ in 
The graded Frobenius characteristic of the coinvariant algebra is
\[
\mathrm{ch}_t\left(\CC[\bx]/(\CC[\bx]^{\SS_n}_+)\right) =  \sum_{\lambda\vdash n} \sum_{\tau\in\mathrm{SYT}(\lambda)} t^{\maj(\tau)} s_\lambda \xlongequal{(*)} \widetilde H_{1^n}(x;t)
\]
where $\widetilde H_{1^n}(x;t)$ is the modified Hall-Littlewood symmetric function of the partition $1^n$.
\end{theorem}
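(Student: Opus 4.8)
\emph{Proof proposal.} The asserted formula packages two equalities: the Lusztig--Stanley identity for $\mathrm{ch}_t$ of the coinvariant algebra, and the equality $(*)$ with the modified Hall--Littlewood function $\widetilde H_{1^n}$; below $n(\lambda):=\sum_i(i-1)\lambda_i$. For the first equality, I would compare graded Frobenius characteristics through the free-module structure of $\CC[\bx]$ over its invariants. Since $\mathrm{char}(\CC)=0$, Chevalley's theorem gives that $\CC[\bx]$ is free over $\CC[\bx]^{\SS_n}=\CC[e_1,\dots,e_n]$ with $e_i$ homogeneous of degree $i$, and that as graded $\SS_n$-modules $\CC[\bx]\cong\CC[\bx]^{\SS_n}\otimes_\CC\bigl(\CC[\bx]/(\CC[\bx]^{\SS_n}_+)\bigr)$, with $\SS_n$ acting trivially on the first tensor factor. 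Applying $\mathrm{ch}_t$ turns this into
\[
\mathrm{ch}_t\bigl(\CC[\bx]\bigr)=\frac1{\prod_{i=1}^n(1-t^i)}\;\mathrm{ch}_t\bigl(\CC[\bx]/(\CC[\bx]^{\SS_n}_+)\bigr).
\]

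Next I would compute the left-hand side directly. A monomial $x^a$ is fixed by $w\in\SS_n$ iff $a$ is constant along the cycles of $w$, so the graded trace of $w$ on $\CC[\bx]$ depends only on the cycle type $\mu$ of $w$ and equals $\prod_j(1-t^{\mu_j})^{-1}=p_\mu(1,t,t^2,\dots)$. Hence, by the power-sum form of the Cauchy identity,
\[
\mathrm{ch}_t\bigl(\CC[\bx]\bigr)=\sum_{\mu\vdash n}\frac{p_\mu(x)}{z_\mu\prod_j(1-t^{\mu_j})}=\sum_{\mu\vdash n}\frac{p_\mu(x)\,p_\mu(1,t,t^2,\dots)}{z_\mu}=\sum_{\lambda\vdash n}s_\lambda(x)\,s_\lambda(1,t,t^2,\dots).
\]
Combining with the previous display and the principal specialization $s_\lambda(1,t,t^2,\dots)=t^{n(\lambda)}/\prod_{c\in\lambda}(1-t^{h(c)})$ gives
\[
\mathrm{ch}_t\bigl(\CC[\bx]/(\CC[\bx]^{\SS_n}_+)\bigr)=\sum_{\lambda\vdash n}\frac{t^{n(\lambda)}\prod_{i=1}^n(1-t^i)}{\prod_{c\in\lambda}(1-t^{h(c)})}\,s_\lambda=\sum_{\lambda\vdash n}\frac{t^{n(\lambda)}[n]!_t}{\prod_{c\in\lambda}[h(c)]_t}\,s_\lambda.
\]
The remaining input is the $t$-analogue of the hook length formula, $\sum_{\tau\in\SYT(\lambda)}t^{\maj(\tau)}=t^{n(\lambda)}[n]!_t/\prod_{c\in\lambda}[h(c)]_t$, a classical identity (provable via $P$-partition theory, or from the hook--content formula), which identifies the coefficient of $s_\lambda$ as $\sum_{\tau\in\SYT(\lambda)}t^{\maj(\tau)}$ and completes the first equality. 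An alternative route, closer to Section~\ref{SCoinvariantsA}: by Theorem~\ref{CoinvariantsA} the descent monomials $wx_{D(w)}$ give a homogeneous basis of the coinvariant algebra with $\deg(wx_{D(w)})=\maj(w)$, so its Hilbert series is $[n]!_t$; one then extracts the $s_\lambda$-coefficient of $\mathrm{ch}_t$ by applying RSK to the monomial basis of $\CC[\bx]$ and dividing out the invariants.

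For the equality $(*)$: by definition $\widetilde H_\mu(x;t)=\sum_{\lambda\vdash n}\widetilde K_{\lambda\mu}(t)\,s_\lambda$ with $\widetilde K_{\lambda\mu}(t)=t^{n(\mu)}K_{\lambda\mu}(t^{-1})$, where $K_{\lambda\mu}(t)$ is the Kostka--Foulkes polynomial. For $\mu=1^n$ we have $n(1^n)=\binom n2$ and $K_{\lambda,1^n}(t)=\sum_{\tau\in\SYT(\lambda)}t^{\mathrm{charge}(\tau)}$ (Lascoux--Sch\"utzenberger), so $\widetilde K_{\lambda,1^n}(t)=\sum_{\tau\in\SYT(\lambda)}t^{\binom n2-\mathrm{charge}(\tau)}$, the cocharge generating function over $\SYT(\lambda)$. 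The classical identification of this polynomial with $\sum_{\tau\in\SYT(\lambda)}t^{\maj(\tau)}$ --- equivalently with the hook-length product $t^{n(\lambda)}[n]!_t/\prod_{c}[h(c)]_t$ from the previous paragraph, against which it is quickly checked --- then yields $\widetilde H_{1^n}(x;t)=\sum_{\lambda\vdash n}\bigl(\sum_{\tau\in\SYT(\lambda)}t^{\maj(\tau)}\bigr)s_\lambda$, matching the first part.

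The main obstacle is this last combinatorial step --- turning the hook-length product into the tableau sum indexed by major index, equivalently matching the $\maj$- and $\mathrm{cocharge}$-generating functions over $\SYT(\lambda)$. The structural ingredients (Chevalley's freeness theorem, the graded trace computation, the Cauchy identity and the principal specialization) are routine, whereas this identity is the one genuinely nontrivial piece, and it is also exactly the content of $(*)$.
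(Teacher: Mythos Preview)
Your proposal is correct, but note that the paper does not actually prove this theorem: it is stated as a classical result attributed to Lusztig and Stanley, and the only accompanying text is the remark that the equality $(*)$ is a special case of Theorem~\ref{thm:Springer} (Hotta--Springer, Garsia--Procesi), or alternatively follows from the Lascoux--Sch\"utzenberger charge formula. Your treatment of $(*)$ via the charge formula thus coincides with one of the two routes the paper points to; the other route (specializing the Springer-fiber result at $\mu=1^n$) you do not pursue.

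For the first (Lusztig--Stanley) equality the paper gives no argument at all, whereas you supply a standard and correct one via Chevalley freeness, the Cauchy kernel, principal specialization, and the $t$-hook length formula. It is worth noting that the same hook-length expression reappears in the paper's Corollary~\ref{cor:coinv}(iii), where it is derived from the $H_n(0)$-module structure together with RSK; that corollary is the $0$-Hecke analogue of Lusztig--Stanley and recovers the same symmetric function $\widetilde H_{1^n}(x;t)$. Your aside about an ``alternative route closer to Section~\ref{SCoinvariantsA}'' is in effect that argument. So you have provided strictly more than the paper does for this statement, and what the paper does provide (the remark on $(*)$) your argument subsumes.
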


\begin{remark}
The equality ($*$) is a special case of Theorem~\ref{thm:Springer}. One can also see it by using the charge formula of Lascoux and Sch\"utzenberger~\cite{LS}.
%, or applying Haglund's combinatorial formula for the modified Macdonald polynomials $\widetilde H_\mu(x;q,t)$ and taking $q\to0$.
\end{remark}

We have an analogues result for the $H_n(0)$-action on the coinvariant algebra.

\begin{corollary}\label{cor:coinv}
(i) The bigraded characteristic of the coinvariant algebra is
\[
\mathrm{Ch}_{q,t}\left(\FF[{\bx}]/(\FF[{\bx}]^{\mathfrak S_n}_+)
\right) = \sum_{w\in\mathfrak S_n}t^{{\rm maj}(w)}q^{{\rm
inv}(w)}F_{D(w^{-1})}
\]
(ii) The (degree) graded quasisymmetric characteristic of the coinvariant algebra is 
\[
\mathrm{Ch}_t\left(\FF[{\bx}]/(\FF[{\bx}]^{\mathfrak S_n}_+)
\right) = \sum_{\alpha\models n} r_\alpha (t) F_\alpha = \sum_{\alpha\models n} \sum_{\tau\in\mathrm{SYT}(\alpha)}t^{\maj(\tau)} F_\alpha.
\]
(iii) The quasisymmetric function in (ii) is actually symmetric and equals 
\[
\sum_{\lambda\vdash n}\qbin{n}{\lambda}{t} m_\lambda = \sum_{\lambda\vdash n} \sum_{\tau\in\mathrm{SYT}(\lambda)} t^{\maj(\tau)} s_\lambda = \sum_{\lambda\vdash n} t^{n(\lambda)}\frac{[n]!_t} {\prod_{u\in\lambda}[h_u]_t} s_{\lambda} = \widetilde H_{1^n}(x;t)
\]
where $h_u$ is the hook length of the box $u$ in the Ferrers diagram of $\lambda$ and $n(\lambda):=\lambda_2+2\lambda_3+3\lambda_4+\cdots$. 
\end{corollary}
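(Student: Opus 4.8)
The plan is to prove Corollary~\ref{cor:coinv} by harvesting the structural information already established in Theorem~\ref{CoinvariantsA} and then layering on top of it the combinatorics of the two gradings (polynomial degree and length filtration) together with standard symmetric-function identities.

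First, for parts (i) and (ii), I would observe that Theorem~\ref{CoinvariantsA} identifies the coinvariant algebra with the regular representation $H_n(0)=\bigoplus_\alpha\P_\alpha$, and moreover pins down an explicit $\FF$-basis $\{\overline\pi_w x_{D(w)}:w\in\SS_n\}$ that is \emph{simultaneously} compatible with both filtrations. Concretely, $\overline\pi_w x_{D(w)}$ sits in the length-filtration piece indexed by $\ell(w)$ (via the isomorphism $\overline\pi_w x_{D(\alpha)}\mapsto T_w T'_{w_0(\alpha^c)}$ and the definition $H_n(0)^{(\ell)}=\bigoplus_{\ell(w)\geq\ell}\FF T_w$), and by Lemma~\ref{DemazureAtoms} its leading term is the descent monomial $wx_{D(w)}$, whose polynomial degree is $\sum_{i\in D(w)}i=\maj(w^{-1})$. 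So the associated graded of the bifiltration has, in bidegree $(q^\ell,t^d)$, a one-dimensional simple factor $\C_{D(w^{-1})}$ for each $w$ with $\ell(w)=\ell$ and $\maj(w^{-1})=d$. Wait --- I need the simple factor to be $\C_{D(w^{-1})}$ rather than $\C_{D(w)}$: this is exactly the content of Krob--Thibon's computation recalled in Section~\ref{SRepH} (the cyclic $H_n(0)$-module generated by a descent monomial of $w$ has the same composition factors as $\P_{D(w^{-1})}$, equivalently $\mathrm{Ch}(H_n(0))=\sum_w F_{D(w^{-1})}$), and I would cite that rather than re-derive it. Applying $\mathrm{Ch}$ factor by factor gives (i); setting $q=1$, i.e. forgetting the length filtration, collapses the sum over $w$ with fixed $\maj(w^{-1})$ and, after re-indexing by the composition $\alpha$ with $D(\alpha)=D(w^{-1})$ and invoking the $w(\tau)\leftrightarrow\tau$ bijection between $\SYT(\alpha)$ and the descent class of $\alpha$, yields (ii).

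For part (iii), I would argue that the quasisymmetric function in (ii) is symmetric because, by Theorem~\ref{CoinvariantsA}, each graded piece of the coinvariant algebra under the degree filtration alone is a \emph{projective} $H_n(0)$-module (a sum of $\P_\alpha$'s, since the length filtration on each $\P_\alpha$ is internal and degree only sees the descent-monomial leading terms which label the $\P_\alpha$'s), and $\mathrm{Ch}$ of a projective module is symmetric --- indeed $\mathrm{Ch}(\P_\alpha)=s_\alpha$. Hence $\mathrm{Ch}_t$ equals $\sum_\alpha(\sum_{\tau\in\SYT(\alpha)}t^{\maj\tau})\,s_\alpha$, and regrouping ribbon Schur functions into ordinary Schur functions via the standard identity $\sum_{\lambda}K_{\lambda\mu}(\text{something})$ --- more precisely, using that $\sum_{\alpha\models n}(\sum_{\tau\in\SYT(\alpha)}t^{\maj\tau})s_\alpha=\sum_{\lambda\vdash n}(\sum_{\tau\in\SYT(\lambda)}t^{\maj\tau})s_\lambda$, which follows from the fact that $\sum_{w(\tau)=w}$ ranges over all SYT and $\SYT(\lambda)=\bigsqcup$ over ribbon fillings is replaced by the RSK/descent-preserving correspondence --- converts it to the Lusztig--Stanley form. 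The remaining equalities ($t$-hook-content formula $\sum_\lambda t^{n(\lambda)}[n]!_t/\prod[h_u]_t\,s_\lambda$, and the $m_\lambda$-expansion with $t$-multinomial coefficients) are classical: the former is the principal specialization of the Hall--Littlewood expansion / the $q$-analogue of the hook length formula for $\SYT$, and the latter follows by pairing $\sum_\lambda(\sum_\tau t^{\maj\tau})s_\lambda$ with the Kostka expansion (\ref{eq:Kostka}) and recognizing $\sum_\mu K_{\lambda\mu}t^{(\cdot)}$ --- in fact I would just cite that this whole chain is $\widetilde H_{1^n}(x;t)$, whose $s_\lambda$- and $m_\lambda$-expansions are standard, referring forward to Theorem~\ref{thm:Springer} and to \cite{LS} as the Lusztig--Stanley theorem already does.

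The main obstacle I anticipate is \emph{bookkeeping the two indexings $D(w)$ versus $D(w^{-1})$ consistently across the three filtration levels}: the descent monomial $wx_{D(w)}$ is naturally attached to $w$ (hence degree $\maj(w^{-1})$, and it generates a copy of $\P_{D(w^{-1})}$ by Krob--Thibon), but the basis element $\overline\pi_w x_{D(\alpha)}$ of the summand $\cong\P_\alpha$ in Theorem~\ref{CoinvariantsA} is attached to $\alpha=D(w_0(\alpha))$ with $w$ ranging over $[w_0(\alpha),w_1(\alpha)]$, and one must check these two pictures are the \emph{same} decomposition --- which is precisely Remark~\ref{rem:atom} saying $\overline\pi_{w_0(\alpha)}x_{D(\alpha)}$ is the descent monomial $w_0(\alpha)x_{D(\alpha)}$, together with the observation that $D(w_0(\alpha)^{-1})=D(\alpha)$ since $w_0(\alpha)$ is an involution on each parabolic block. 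Once that identification is nailed down, computing $\mathrm{Ch}_{q,t}$ is a matter of summing $q^{\ell(w)}t^{\deg}$ over the explicit basis, and everything else is invoking results already in hand. The symmetry claim in (iii) is the only other place requiring a small argument beyond citation, and the projectivity-of-each-degree-piece observation handles it cleanly.
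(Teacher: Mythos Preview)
Your approach for parts (i) and (ii) is essentially the paper's: deduce everything from the decomposition of Theorem~\ref{CoinvariantsA}, noting that the summand $H_n(0)\cdot\overline\pi_{w_0(\alpha)}x_{D(\alpha)}\cong\P_\alpha$ sits entirely in polynomial degree $\sum_{i\in D(\alpha)}i$, and then read off the composition factors of each $\P_\alpha$ from Krob--Thibon. There is, however, a genuine indexing slip: the degree of the descent monomial $wx_{D(w)}$ is $\sum_{i\in D(w)}i=\maj(w)$, \emph{not} $\maj(w^{-1})$. With your stated bookkeeping you would obtain $\sum_w q^{\inv(w)}t^{\maj(w^{-1})}F_{D(w^{-1})}$, which after the substitution $w\mapsto w^{-1}$ becomes $\sum_w q^{\inv(w)}t^{\maj(w)}F_{D(w)}$ --- the wrong formula. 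The fix is simply that for $w\in[w_0(\alpha),w_1(\alpha)]$ one has $D(w)=D(\alpha)$, so the degree is $\maj(w)$, while the simple factor is $\C_{D(w^{-1})}$; these are two \emph{different} roles of $w$ and should not be conflated.

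For part (iii) you take a different route from the paper, and your symmetry argument via projectivity of each degree piece is a nice shortcut (the paper instead passes to the $M_\alpha$-expansion and observes the $t$-multinomial $\qbin{n}{\alpha}{t}$ depends only on the rearrangement class of $\alpha$). But your execution is garbled: projectivity gives $\mathrm{Ch}_t=\sum_{\alpha\models n} t^{\maj(\alpha)}\,s_\alpha$ with \emph{ribbon} Schur functions $s_\alpha$ and coefficient $t^{\maj(\alpha)}$, not $\sum_{\tau\in\SYT(\alpha)}t^{\maj(\tau)}$ as you wrote. The correct passage to straight-shape Schur functions is then the standard expansion $s_\alpha=\sum_\lambda |\{\tau\in\SYT(\lambda):D(\tau)=D(\alpha)\}|\,s_\lambda$, which immediately yields $\sum_\lambda\bigl(\sum_{\tau\in\SYT(\lambda)}t^{\maj(\tau)}\bigr)s_\lambda$. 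The paper reaches the same destination by a different road: it first gets the $m_\lambda$-expansion, then applies RSK to permutations of a multiset to extract the Kostka numbers and hence the $s_\lambda$-expansion via~(\ref{eq:Kostka}). Your route is arguably more direct once the projectivity observation is in hand; the paper's route has the advantage of establishing the $m_\lambda$-expansion (the first equality in (iii)) along the way, which you would still need to supply separately.
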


\begin{proof}
Given a composition $\alpha$ of $n$, the $H_n(0)$-module $H_n(0)\cdot\overline\pi_{w_0(\alpha)} x_{D(\alpha)}$ consists of homogeneous elements of degree ${\rm maj}(w)$ for any $w\in[w_0(\alpha),w_1(\alpha)]$. Hence Theorem~\ref{CoinvariantsA} implies (i). It follows from (i) that the degree graded multiplicity of a simple $H_n(0)$-module $\C_\alpha$ in the coinvariant algebra is
\[
r_\alpha (t) = \sum_{ \substack{ w\in\SS_n: \\ D(w^{-1})=D(\alpha) }} t^{\maj(w)} = \sum_{ \substack{ w\in\SS_n: \\ D(w)=D(\alpha) }} t^{\maj(w^{-1})} = \sum_{\tau\in\mathrm{SYT}(\alpha)}t^{\maj(\tau)}.
\] 
Then % (or directly from the filtration of $\FF[{\bx}]$ by ``$<_{ts}$'')
\[
\mathrm{Ch}_t\left(\FF[{\bx}]/(\FF[{\bx}]^{\mathfrak S_n}_+)\right) = \sum_{\alpha\models n} r_\alpha(t) F_\alpha = \sum_{\alpha\models n} \qbin{n}{\alpha}{t} M_\alpha = \sum_{\lambda\vdash n}\qbin{n}{\lambda}{t} m_\lambda \in\Sym[t].
\]
 
Given a partition $\mu$ of $n$, we have
\[
\qbin{n}{\mu}{t} = \sum_{ \substack{ w\in\SS_n: \\ D(w)\subseteq D(\mu) }} t^{\maj(w^{-1})}
= \sum_{w\in \SS(\mu)} t^{\maj(w^{-1})} 
\]
where $\SS(\mu)$ is the set of all permutations of the multiset of type $\mu$. For example, $w=3561247$ corresponds to
\[
\begin{pmatrix}
1 & 1 & 1 & 2 & 2 & 3 & 3 \\
3 & 5 & 6 & 1 & 2 & 4 & 7
\end{pmatrix}\in\SS(332).
\]
Applying RSK to $w\in\SS(\mu)$ gives a pair $(P,Q)$ of Young tableaux $P$ and $Q$ of the same shape (say $\lambda$), where $P$ is standard, and $Q$ is semistandard of type $\mu$. It is well-known that the descents of $w^{-1}$ are precisely the descents of $P$; see Sch\"utzenberger~\cite{DesRSK}. Hence
\begin{eqnarray*}
\mathrm{Ch}_t\left(\FF[{\bx}]/(\FF[{\bx}]^{\mathfrak S_n}_+)\right)  & = &\sum_{\lambda\vdash n} \sum_{P\in{\rm SYT(\lambda)}} t^{\maj(P)} \sum_{\mu\vdash n} K_{\lambda\mu} m_\mu \\
& = & \sum_{\lambda\vdash n} t^{n(\lambda)}\frac{[n]!_t}{\prod_{u\in\lambda}[h_u]_t} s_{\lambda}.
\end{eqnarray*}
Here the last equality follows from the the $q$-hook length formula and (\ref{eq:Kostka}).
\end{proof}

%\mathrm{Ch}_t\left(\FF[{\bx}]\right)=\frac1{(1-t)(1-t^2)\cdots(1-t^n)} \sum_{w\in\mathfrak S_n} t^{{\rm maj}(w)} F_{D(w^{-1})}.
%the following order (which appeared in Allen \cite{Allen}): $x^d<_{ts} x^e$ if $\lambda(d)<_L \lambda(e)$, or if $\lambda(d)=\lambda(e)$ and $x^d<_L x^e$ in the lexicographic order.

%Lastly we consider the element $T'_{w_0}$, where $w_0$ is the top element in $\mathfrak S_n$. Since $(T'_{w_0})^2=T'_{w_0}$, the subalgebra $C$ generated by $T'_{w_0}$ is two dimensional. Let $T'_{w_0}$ act on the right of the coinvariant algebra $\FF[{\bx}]/(\FF[{\bx}]^{\mathfrak S_n}_+)$ by taking the constant term. Then as an $(H_n(0),C)$-bimodule, $H_n(0)$ is isomorphic to $\FF[{\bx}]/(\FF[{\bx}]^{\mathfrak S_n}_+)$, since
%\[
%T_{w_0(\alpha)}T'_{w_0(\alpha^c)}T'_{w_0}=T_{w_0(\alpha)}T'_{w_0} =\left\{
%\begin{array}{ll}
%0, & {\rm if}\ \alpha\ne (n),\\
%T_{w_0(\alpha)}T'_{w_0(\alpha^c)}, & {\rm if}\ \alpha=(n).
%\end{array} \right. \]   Cyclic sieving?

\section{Coinvariant algebra of Weyl groups}\label{SCoinvariantsW}

The results in the previous section can be generalized to the action of the $0$-Hecke algebra of a Weyl group $W$ on the Laurent ring $\FF[\Lambda]$ of the weight lattice $\Lambda$ of $W$. The readers are referred to Humphreys~\cite{Humphreys} for details on the Weyl groups and weight theory.

Demazure's character formula \cite{Demazure} expresses the character of the highest weight modules over a semisimple %symmetrizable Kac-Moody
Lie algebra using the \emph{Demazure operators} $\pi_i$ on the group
ring $\FF[\Lambda]$ of the weight lattice $\Lambda$. Write $e^\lambda$ for the element in $\FF[\Lambda]$ corresponding to
the weight $\lambda\in\Lambda$. Suppose that $\gamma_1,\ldots,\gamma_r$ are the simple roots\footnote{We use $\gamma$ to denote roots because $\alpha$ is used for compositions throughout this paper.}, $s_1,\ldots,s_r$ are the simple reflections, and $\lambda_1,\ldots,\lambda_r$ are the fundamental weights. Then
\[
\FF[\Lambda]=\FF[z_1,\ldots,z_r,z_1^{-1},\ldots,z_r^{-1}]
\]
where $z_i=e^{\lambda_i}$. The \emph{Demazure operators} are defined by 
\[ \pi_i=\frac{f-e^{-\gamma_i}s_i(f)}{1-e^{-\gamma_i}}, \quad \forall f\in\FF[\Lambda]. \]
It follows that \begin{equation}\label{DemaWt}
\pi_i(e^\lambda)=\left\{\begin{array}{ll}
e^\lambda+e^{\lambda-\gamma_i}+\cdots+e^{s_i\lambda}, & {\rm if}\
\langle \lambda,\gamma_i \rangle\geq 0,\\
0, & {\rm if}\ \langle \lambda,\gamma_i \rangle= -1,\\
-e^{\lambda+\gamma_i}-\cdots-e^{s_i\lambda-\gamma_i}, & {\rm if}\
\langle \lambda,\gamma_i \rangle <-1.
\end{array}\right.
\end{equation}
Here $\langle \lambda,\gamma_i \rangle=2(\lambda,\gamma_i)/(\gamma_i,\gamma_i)$ with $(-,-)$ being the standard inner product. See, for example, Kumar \cite{Kumar}. The Demazure operators satisfy $s_i\pi_i=\pi_i$, $\pi_i^2=\pi_i$, and the braid relations \cite[\S5.5]{Demazure}. Hence the $0$-Hecke algebra $H_W(0)$ of the Weyl group $W$ acts on $\FF[\Lambda]$ by sending $T_i$ to $\overline\pi_i=\pi_i-1$. If $w=s_{i_1}\cdots s_{i_k}$ is a reduced expression then $T_w$ acts by $\overline\pi_w=\overline\pi_{i_1}\cdots\overline\pi_{i_k}$. It is clear that $\pi_if=f$ if and only if $s_if=s_if$ for all $f\in\FF[\Lambda]$.

Using the Stanley-Reisner ring of the Coxeter complex of $W$, Garsia and Stanton \cite{GarsiaStanton} showed that
\[
\FF[\Lambda]^W=\FF[a_1,\ldots,a_r]
\]
where
\[ a_i=\sum_{w\in W/W_{i^c}}e^{w\lambda_i},\quad(i^c=[r]\setminus\{i\})
\]
and $\FF[\Lambda]$ has a free basis over $\FF[\Lambda]^W$, which consists of the \emph{descent monomials}
\[ z_w:=\prod_{i\in D(w)} e^{w\lambda_i},\quad \forall w\in W. \]
See also Steinberg \cite{Steinberg}. If we write $\lambda_I=\sum_{i\in I}\lambda_i$ for all subsets $I\subseteq[r]$, then $z_w=we^{\lambda_{D(w)}}$. The basis $\{z_w:w\in W\}$ induces an $\FF$-basis for $\FF[\Lambda]/(a_1,\ldots,a_r).$ The $H_W(0)$-action on $\FF[\Lambda]$ is $\FF[\Lambda]^W$-linear, hence inducing an $H_W(0)$-action on $\FF[\Lambda]/(a_1,\ldots,a_r)$. %We shall identify the elements in $\FF[\Lambda]$ with its image in the quotient $\FF[\Lambda]/(a_1,\ldots,a_r)$.

We order the weights by $\lambda\leq\mu$ if $\mu-\lambda$ is a nonnegative linear combination of simple roots. Every monomial is $\FF[\Lambda]$ is of the form $m=e^\lambda$ for some weight $\lambda$. By Humphreys~\cite{Humphreys}, there exists a unique dominant weight $\mu$ such that $\mu=w\lambda$ for some $w$ in $W$, and we have $\lambda\leq\mu$. Write $[m]_+=[\lambda]_+:=\mu$ and call this dominant weight $\mu$  the \emph{shape} of the monomial $m$ or the weight $\lambda$.

For every monomial $m$ of shape $\lambda$, Garsia and Stanton \cite[proof of Theorem 9.4]{GarsiaStanton} showed that
\begin{equation}\label{DesAlgorithm}
m-\sum_{ \substack{d\in\ZZ^r,\ w\in W: \\ \lambda_d+\lambda_{D(w)} = \lambda }} c_{d,w}
a_1^{d_1}\cdots a_r^{d_r} z_w
\end{equation}
is a linear combination of monomials whose shape is strictly less than $\lambda$, where $\lambda_d=d_1\lambda_1+\cdots+d_r\lambda_r$ and $c_{d,w}\in\ZZ$. It follows from induction that the descent monomials $z_w$ form an $\FF[\Lambda]^W$-basis for $\FF[\Lambda]$.

\begin{lemma}\label{shape}
Suppose that $\gamma$ is a simple root and $\lambda$ is a weight such that $\langle \lambda,\gamma \rangle\geq0$. If $0\leq k\leq \langle \lambda,\gamma \rangle$ then $[\lambda-k\gamma]_+\leq[\lambda]_+$, and the equality holds if and only if $k=0$ or $\langle \lambda,\gamma \rangle$.
\end{lemma}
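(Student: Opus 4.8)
The statement concerns the weight $\lambda - k\gamma$ where $\gamma$ is a simple root and $0 \le k \le \langle\lambda,\gamma\rangle =: m$. The plan is to analyze the $\gamma$-string through $\lambda$, namely the weights $\lambda, \lambda-\gamma, \ldots, \lambda - m\gamma = s_\gamma\lambda$, and show that the dominant weight in the $W$-orbit of $\lambda - k\gamma$ is $\le [\lambda]_+$ in the root order, with equality only at the two endpoints $k=0$ and $k=m$. The key tool is the following elementary fact about $\SL_2$-strings: the reflection $s_\gamma$ fixes $[\lambda]_+$ in the sense that it maps the $\gamma$-string to itself by $\mu \mapsto \mu - \langle\mu,\gamma\rangle\gamma$, and along this string the pairing with $\gamma$ decreases by $2$ at each step, running through $m, m-2, \ldots, -m$.

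First I would reduce to showing that $[\lambda - k\gamma]_+ \le [\lambda]_+$ for each $k$, together with strictness for $0 < k < m$. For the inequality: write $\nu = \lambda - k\gamma$ and let $\mu = [\nu]_+$, so $\mu = w\nu$ for some $w \in W$ and $\nu \le \mu$. We already know from the general weight theory (cited from Humphreys) that $[\nu]_+ \ge \nu$; I want to compare $[\nu]_+$ with $[\lambda]_+$. Since $\lambda$ and $\lambda - k\gamma$ lie on a common $\gamma$-string, they have the \emph{same} dominant representative when $k = 0$ or $k = m$ (as $\lambda - m\gamma = s_\gamma\lambda$ is in the $W$-orbit of $\lambda$), giving the equality cases immediately. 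For $0 < k < m$, the point $\lambda - k\gamma$ is in the "interior" of the string and I would argue that no element of $W$ can send it to a dominant weight as large as $[\lambda]_+$.

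Concretely, the cleanest route is: let $\mu_0 = [\lambda]_+$. I claim $\mu_0 - (\lambda - k\gamma)$ is a nonnegative combination of simple roots, i.e. $\lambda - k\gamma \le \mu_0$; since $\lambda \le \mu_0$ this is clear because adding $-k\gamma$ with $k>0$ only makes the difference larger in the $\gamma$-coordinate (here one must check $k \le m$ guarantees we have not "overshot," but in fact $\lambda \le \mu_0$ plus $k \ge 0$ already gives $\lambda - k\gamma \le \mu_0$ trivially as $k\gamma \ge 0$). Hence $[\lambda - k\gamma]_+ \le \mu_0$ once one knows that $\nu \le \mu$ and $\nu' \le \nu$ with $\mu$ dominant implies $[\nu']_+ \le \mu$ — this is a standard lemma on dominance which I would either cite or prove in one line using that $\mu - $ (any $W$-conjugate of $\nu'$) $\ge 0$ when $\nu' \le \mu$ and $\mu$ is dominant. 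The main obstacle is pinning down the strictness for $0 < k < m$: I expect to handle it by contradiction — if $[\lambda - k\gamma]_+ = [\lambda]_+ = \mu_0$, then $\lambda - k\gamma$ and $\lambda$ are both in the $W$-orbit of $\mu_0$, hence in the same $W$-orbit, hence on the same $\gamma$-string they must be related by $s_\gamma$ or equal, forcing $k \in \{0, m\}$ — contradicting $0 < k < m$. The one subtlety to verify carefully is that two weights on the same $\gamma$-string lying in the same $W$-orbit are necessarily equal or swapped by $s_\gamma$; this follows because the stabilizer considerations and the fact that $W$-orbit $\cap$ $\gamma$-string is exactly an $s_\gamma$-orbit, which I would justify via the $\SL_2$-theory: the $\gamma$-string is the orbit under the subgroup $\langle s_\gamma\rangle$ intersected with... — more simply, if $w\lambda = \lambda - k\gamma$ then $\lambda - w\lambda = k\gamma$, and for $\lambda$ with $\langle \lambda, \gamma\rangle = m > 0$ the only multiples of $\gamma$ of the form $\lambda - w\lambda$ with $0 \le k \le m$ are $k = 0$ (take $w = 1$) and $k = m$ (take $w = s_\gamma$), which one checks by a short length/positivity argument on $w$.
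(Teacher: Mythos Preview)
Your treatment of the equality cases $k=0$ and $k=m$ is correct and matches the paper. But your argument for the inequality $[\lambda-k\gamma]_+\le[\lambda]_+$ rests on a false lemma. You claim that $\nu'\le\mu$ with $\mu$ dominant implies $[\nu']_+\le\mu$, with the one-line justification that ``$\mu-w\nu'\ge0$ for all $w\in W$.'' This is not true: already in type $A_1$ with simple root $\gamma$, take $\mu=0$ (dominant) and $\nu'=-\gamma$; then $\nu'\le\mu$, yet $[\nu']_+=\gamma\not\le0$. The relation $\nu'\le\mu$ simply does not propagate to $W$-conjugates of $\nu'$ unless $\nu'$ is itself dominant. (What \emph{would} work here is the convexity observation that $\lambda-k\gamma$ lies on the segment from $\lambda$ to $s_\gamma\lambda$, hence in the convex hull of $W[\lambda]_+$, together with a genuine argument that every weight in that hull has dominant representative $\le[\lambda]_+$; but that is not the one-liner you sketched.)

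The paper bypasses this issue by proving the strict inequality directly for $0<k<m$. Letting $u\in W$ be such that $u(\lambda-k\gamma)$ is dominant, it splits on the sign of the root $u\gamma$. If $u\gamma>0$ then $u(\lambda-k\gamma)=u\lambda-k\,u\gamma<u\lambda\le[\lambda]_+$. If $u\gamma<0$, one rewrites $\lambda-k\gamma=s_\gamma\lambda+(m-k)\gamma$ to get $u(\lambda-k\gamma)=us_\gamma\lambda+(m-k)\,u\gamma<us_\gamma\lambda\le[\lambda]_+$. In both lines the final step uses only the standard fact that every $W$-conjugate of a weight is $\le$ its dominant representative. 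This yields the inequality and the strictness simultaneously, with no appeal to your problematic lemma. Your strictness idea can be made to work cleanly via the $W$-invariant norm (since $|\lambda-k\gamma|^2=|\lambda|^2+k(k-m)|\gamma|^2$, equality of norms forces $k\in\{0,m\}$), but you still need a correct proof of the inequality itself.
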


\begin{proof}
Let $\mu=\lambda-k\gamma$. If $k=0$ or $\langle \lambda,\gamma \rangle$, then $\mu=\lambda$ or
$s_\gamma\lambda$, and thus $[\mu]_+=[\lambda]_+$ in either case.

Now assume $0<k<\langle\lambda,\gamma \rangle$, and let $w\lambda$ and $u\mu$ be dominant for some $w$ and $u$ in $W$.

If $u\gamma>0$ then $u\mu=uw^{-1}(w\lambda)-ku\gamma<uw^{-1}(w\lambda)\leq w\lambda$.

If $u\gamma<0$ then
\begin{eqnarray*}
u\mu&=&us_\gamma\cdot s_\gamma\lambda-ku\gamma\\
&=&us_\gamma(\lambda-\langle\lambda,\gamma \rangle\gamma)-ku\gamma\\
&=&us_\gamma\lambda+(\langle\lambda,\gamma \rangle-k)u\gamma\\
&<&us_\gamma\lambda\leq w\lambda.
\end{eqnarray*}
Thus we are done.
\end{proof}

\begin{lemma}\label{DesPolyExpansion}
Given a composition $\alpha$ of $r+1$, let $\lambda_\alpha=\lambda_{D(\alpha)}$ and $z_\alpha =e^{\lambda_\alpha}$. If $w\in W$ has $D(w)\subseteq D(\alpha)$, then
\[
\overline\pi_w z_\alpha =e^{w\lambda_\alpha }+ \sum_{[\lambda]_+<\lambda_\alpha }
c_\lambda e^\lambda, \quad c_\lambda\in\ZZ.
\]
Moreover, $e^{w\lambda_\alpha }$ is a descent
monomial if and only if $D(w)=D(\alpha)$.
\end{lemma}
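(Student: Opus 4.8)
The plan is to imitate the proof of Lemma~\ref{DemazureAtoms} almost verbatim, with the local rule (\ref{pi}) replaced by (\ref{DemaWt}) and the monomial order $\prec$ replaced by the partial order ``$[\,\cdot\,]_+ < [\,\cdot\,]_+$'' on exponents, with Lemma~\ref{shape} taking over the role of E.~E.~Allen's inequality. First I would prove the expansion $\overline\pi_w z_\alpha = e^{w\lambda_\alpha} + \sum_{[\lambda]_+ < \lambda_\alpha} c_\lambda e^\lambda$ by induction on $\ell(w)$, the case $\ell(w)=0$ being immediate. For $\ell(w)>0$ I would choose a simple reflection $s_j$ with $\ell(s_j w)<\ell(w)$ and set $u=s_j w$, so that $w=s_j u$, $\ell(u)=\ell(w)-1$, $\overline\pi_w=\overline\pi_j\overline\pi_u$ (using $\overline\pi_i^2=-\overline\pi_i$), and $D(u)\subseteq D(w)\subseteq D(\alpha)$. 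The inductive hypothesis then applies to $u$, giving $\overline\pi_u z_\alpha=e^{u\lambda_\alpha}+(\textrm{terms }e^\lambda\textrm{ with }[\lambda]_+<\lambda_\alpha)$, and it remains to apply $\overline\pi_j$ and keep track of shapes.

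The effect of $\overline\pi_j$ on shapes is read off from (\ref{DemaWt}): for any weight $\lambda$, $\overline\pi_j(e^\lambda)$ is an integral combination of $e^\mu$ with $[\mu]_+\le[\lambda]_+$, the only possible terms of full shape $[\lambda]_+$ being $e^\lambda$ and $e^{s_j\lambda}$. Indeed, in each of the three cases of (\ref{DemaWt}) the remaining exponents have the form $\lambda-k\gamma_j$ (when $\langle\lambda,\gamma_j\rangle\ge0$) or $s_j\lambda-k\gamma_j$ (when $\langle\lambda,\gamma_j\rangle<0$) with $k$ strictly between $0$ and $\langle\lambda,\gamma_j\rangle$, resp.\ $\langle s_j\lambda,\gamma_j\rangle$, so Lemma~\ref{shape} forces their shapes to drop; in particular, if $\langle\lambda,\gamma_j\rangle\ge1$ then $e^\lambda$ does not occur, $e^{s_j\lambda}$ occurs with coefficient $1$, and every other term has shape $<[\lambda]_+$. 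Since $\lambda_\alpha=\lambda_{D(\alpha)}$ is dominant we have $[u\lambda_\alpha]_+=\lambda_\alpha$ and $s_j u\lambda_\alpha=w\lambda_\alpha$, so applying this with $\lambda=u\lambda_\alpha$, together with the harmless action of $\overline\pi_j$ on the error terms (already of shape $<\lambda_\alpha$), reduces the whole induction to the single inequality $\langle u\lambda_\alpha,\gamma_j\rangle\ge1$. I expect this root-theoretic step to be the main obstacle. Writing the positive root $u^{-1}\gamma_j=\sum_k c_k\gamma_k$ (positive because $\ell(s_j u)>\ell(u)$; all $c_k$ nonnegative integers), and using $\langle\lambda_i,\gamma_k\rangle=\delta_{ik}$ and $W$-invariance of the form, one gets $\langle u\lambda_\alpha,\gamma_j\rangle=\langle\lambda_{D(\alpha)},u^{-1}\gamma_j\rangle=\sum_{k\in D(\alpha)}c_k$, so it suffices to produce $k\in D(w)\subseteq D(\alpha)$ with $c_k>0$. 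This follows from the standard fact that if $w$ sends a nonnegative combination $\sum_k c_k\gamma_k$ of simple roots to a negative root, then $w\gamma_k$ is negative for some $k$ with $c_k>0$ (otherwise $w(\sum_k c_k\gamma_k)$ would be a nonnegative combination of positive roots, hence positive); and here $w(u^{-1}\gamma_j)=s_j\gamma_j=-\gamma_j$ is negative, while $w\gamma_k$ negative means exactly $s_k\in D(w)$.

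For the second assertion, one direction is trivial: if $D(w)=D(\alpha)$ then $e^{w\lambda_\alpha}=e^{w\lambda_{D(w)}}=z_w$ is a descent monomial. Conversely, suppose $e^{w\lambda_\alpha}=z_u=e^{u\lambda_{D(u)}}$ for some $u\in W$. Taking shapes of both sides, and using that $\lambda_{D(\alpha)}$ and $\lambda_{D(u)}$ are dominant (hence their own shapes), gives $\lambda_{D(\alpha)}=\lambda_{D(u)}$, so $D(u)=D(\alpha)$ since $I\mapsto\lambda_I$ is injective. Then $w\lambda_{D(\alpha)}=u\lambda_{D(\alpha)}$, so $u$ lies in the coset $wW_{D(\alpha)^c}$ because the stabilizer of $\lambda_{D(\alpha)}$ is the parabolic subgroup $W_{D(\alpha)^c}$. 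Finally, $w$ and $u$ are both the unique minimal-length element of this coset: $D(u)=D(\alpha)$ and the hypothesis $D(w)\subseteq D(\alpha)$ each say that the right descent set meets $D(\alpha)^c$ trivially. Hence $w=u$, and in particular $D(w)=D(u)=D(\alpha)$. The only parts needing care are the two applications of Lemma~\ref{shape} to the two sides of (\ref{DemaWt}) and the root-theoretic inequality above; everything else is bookkeeping.
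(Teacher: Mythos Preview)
Your proof is correct and, for the first assertion, essentially identical to the paper's: the same induction on $\ell(w)$, the same reduction via Lemma~\ref{shape} and (\ref{DemaWt}) to the inequality $\langle u\lambda_\alpha,\gamma_j\rangle>0$, and the same root-theoretic argument for that inequality (the paper phrases it as ``if $i\notin D(\alpha)$ then $s_ju(\gamma_i)>0$, forcing $m_i>0$ for some $i\in D(\alpha)$'', which is the contrapositive of your formulation).

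For the second assertion your route diverges slightly. After deducing $D(u)=D(\alpha)$ and $u\in wW_{D(\alpha)^c}$, you finish by observing that both $w$ and $u$ have right descent set disjoint from $D(\alpha)^c$ and hence are both the unique minimal-length representative of that coset. The paper instead writes $w=us_{j_1}\cdots s_{j_k}$ with each $j_m\notin D(\alpha)$, invokes the deletion property to conclude $w\le u$ in Bruhat order, and then symmetrically $u\le w$. Your minimal-coset-representative argument is shorter and more transparent; the paper's argument is a bit more hands-on but reaches the same conclusion.
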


\begin{proof}
We prove the first assertion by induction on $\ell(w)$.
If $\ell(w)=0$ then we are done; otherwise $w=s_ju$
for some $j\in[r]$ and for some $u$ with $\ell(u)<\ell(w)$.
Since $D(u)\subseteq D(w)\subseteq D(\alpha)$, one has
\[
\overline\pi_u z_\alpha =e^{u\lambda_\alpha }+ \sum_{[\lambda]_+<\lambda_\alpha }
c_\lambda e^\lambda,\quad c_\lambda\in\ZZ.
\]
Applying Lemma~\ref{shape} to (\ref{DemaWt}) (if the simple root $\gamma_j$ satisfies $\langle\lambda,\gamma_j\rangle\leq0$ then
$\langle s_j\lambda,\gamma_j\rangle\geq0$), one sees that 
\[
\overline\pi_j(e^\lambda)=\sum_{[\mu]_+\leq[\lambda]_+}a_\mu e^\mu,
\quad a_\mu\in\ZZ.
\]
If we can show $\langle u\lambda_\alpha ,\gamma_j \rangle>0$, then
applying Lemma~\ref{shape} to the first case of (\ref{DemaWt}) one
has
\[
\overline\pi_j e^{u\lambda_\alpha }=e^{s_ju\lambda_\alpha }+
\sum_{[\mu]_+<\lambda_\alpha } b_\mu e^\mu,\quad b_\mu\in\ZZ.
\]
Combining these equations one obtains
\[
\overline\pi_w z_\alpha =e^{w\lambda_\alpha }+ \sum_{[\mu]_+<\lambda_\alpha } b_\mu
e^\mu +\sum_{[\lambda]_+<\lambda_\alpha }c_\lambda
\sum_{[\mu]_+\leq[\lambda]_+}a_\mu e^\mu,
\]
which gives the desired result.

Now we prove $\langle u\lambda_\alpha ,\gamma_j \rangle>0$. In fact, since $\ell(s_ju)>\ell(u)$, one has $u^{-1}(\gamma_j)>0$, i.e.
\[ u^{-1}(\gamma_j)=\sum_{i=1}^r m_i\gamma_i \]
for some nonnegative integers $m_i$. Applying $s_ju$ to both sides one gets
\[ 0>-\gamma_j=\sum_{i=1}^r m_is_ju(\gamma_i). \]
By the hypothesis $D(s_ju)\subseteq D(\alpha)$, if $i\notin D(\alpha)$ then $s_ju(\gamma_i)>0$. This forces $m_i>0$ for some $i\in D(\alpha)$, and thus
\[
\langle u\lambda_\alpha ,\gamma_j \rangle=\langle
\lambda_\alpha ,u^{-1}\gamma_j \rangle=\sum_{i\in D(\alpha)}m_i>0.
\]

Finally we consider when $e^{w\lambda_\alpha }$ is a descent monomial. If $D(w)=D(\alpha)$ then it is just the descent monomial of $w$. Conversely, if it is a descent monomial of some $u\in W$ then $w\lambda_\alpha =u\lambda_{D(u)}$. Since $\lambda_\alpha $ and $\lambda_{D(u)}$ are both in the fundamental Weyl chamber, the above equality implies that $\lambda_\alpha =\lambda_{D(u)}$ and $u^{-1}w$ is a product of simple reflections which all fix $\lambda_\alpha $ (\cite[Lemma 10.3B]{Humphreys}), {\it i.e.}
\[
w=us_{j_1}\cdots s_{j_k},\quad j_1,\ldots,j_k\notin D(u)=D(\alpha).
\]
Since $D(w)\subseteq D(\alpha)=D(u)$, none of $s_{j_1},\ldots,s_{j_k}$ is a descent of $w$, and thus it follows from the deletion property of $W$ that $w$ is a subword of some reduced expression of $u$, i.e. $w\leq u$ in Bruhat order. Similarly, it follows from $u=ws_{j_k}\cdots s_{j_1}$ that $u\leq w$ in Bruhat order. Thus $u=w$.
\end{proof}

\begin{theorem}\label{WeylCoinvariants}
The coinvariant algebra $\FF[\Lambda]/(a_1,\ldots,a_r)$ has an $\FF$-basis $\left\{ \overline\pi_w e^{\lambda_{D(w)}}:w\in W \right\}$ and decomposes as
\[
\FF[\Lambda]/(a_1,\ldots,a_r)=\bigoplus_{\alpha\models r+1} H_W(0)\cdot\overline\pi_{w_0(\alpha)} z_\alpha
\]
where each direct summand $H_W(0)\cdot\overline\pi_{w_0(\alpha)} z_\alpha$ has an $\FF$-basis \[
\left\{\overline\pi_w z_\alpha :w\in[w_0(\alpha),w_1(\alpha)]\right\}
\]
and is isomorphic to the projective indecomposable $H_W(0)$-module $\P_\alpha$. Consequently, $\FF[\Lambda]/(a_1,\ldots,a_r)$ is isomorphic to the regular representation of $H_W(0)$.
\end{theorem}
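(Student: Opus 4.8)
\emph{Overview.} The plan is to follow the proof of Theorem~\ref{CoinvariantsA} almost verbatim, under the dictionary: the polynomial ring $\FF[\bx]$ graded by degree is replaced by $\FF[\Lambda]$ filtered by the partial order on monomials given by shape $m\mapsto[m]_+$; the descent monomials $wx_{D(w)}$ are replaced by $z_w=e^{w\lambda_{D(w)}}$; Allen's identity (\ref{eq:Allen}) together with the $P$-partition expansion is replaced by the Garsia--Stanton straightening (\ref{DesAlgorithm}); Lemma~\ref{DemazureAtoms} is replaced by Lemma~\ref{DesPolyExpansion}; and the monomial $x_{D(\alpha)}$ by $z_\alpha=e^{\lambda_{D(\alpha)}}$.

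\emph{Step 1 (the basis of Demazure atoms).} First I would prove the Weyl analogues of Lemmas~\ref{Straightening} and~\ref{lem:basis}: any family $\{f_w:w\in W\}$ with $f_w=z_w+(\text{monomials of shape strictly below }\lambda_{D(w)})$ is an $\FF[\Lambda]^W$-basis of $\FF[\Lambda]$, hence induces an $\FF$-basis of $\FF[\Lambda]/(a_1,\dots,a_r)$. The argument is unitriangularity with respect to shape: use (\ref{DesAlgorithm}) to rewrite an arbitrary monomial, and in particular the lower-shape correction terms of $f_w$, in the $\FF$-basis $\{a_1^{d_1}\cdots a_r^{d_r}z_w\}$ of $\FF[\Lambda]$. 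Since each $a_i$ has shape $\lambda_i$ and shape is subadditive — $[\nu+\rho]_+\le[\nu]_++[\rho]_+$, because $u(\nu+\rho)=u\nu+u\rho\le[\nu]_++[\rho]_+$ for the $u$ making $\nu+\rho$ dominant — multiplying a correction of shape strictly below $\lambda_{D(w)}$ by any $a_1^{d_1}\cdots a_r^{d_r}$ yields a term of shape strictly below $d_1\lambda_1+\cdots+d_r\lambda_r+\lambda_{D(w)}$, so the transition matrix from $\{a^dz_w\}$ to $\{a^df_w\}$ is unitriangular in the shape order, hence invertible. By Lemma~\ref{DesPolyExpansion}, $f_w:=\overline\pi_we^{\lambda_{D(w)}}$ has leading monomial $z_w$ with all other monomials of strictly smaller shape, so $\{\overline\pi_we^{\lambda_{D(w)}}:w\in W\}$ is the asserted $\FF$-basis of the coinvariant algebra.

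\emph{Step 2 (the decomposition).} Fix $\alpha\models r+1$, set $v_\alpha=\overline\pi_{w_0(\alpha)}z_\alpha$ and $M_\alpha=H_W(0)\cdot v_\alpha$. Because the $\overline\pi_i$ obey the $T_i$-relations, one has $T_uv_\alpha=\overline\pi_u\overline\pi_{w_0(\alpha)}z_\alpha=\pm\overline\pi_{u\ast w_0(\alpha)}z_\alpha$ with $\ast$ the $0$-Hecke product and $u\ast w_0(\alpha)\ge w_0(\alpha)$ in the left weak order; since $D(w_0(\alpha))=D(\alpha)$, any $w\ge w_0(\alpha)$ satisfies $D(w)\supseteq D(\alpha)$, so $M_\alpha=\mathrm{span}\{\overline\pi_wz_\alpha:w\ge w_0(\alpha)\}$. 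For $j\notin D(\alpha)$ one has $\overline\pi_jz_\alpha=(\pi_j-1)e^{\lambda_{D(\alpha)}}=0$ because $s_j$ fixes $\lambda_{D(\alpha)}$; hence if $w\ge w_0(\alpha)$ has a right descent $j\notin D(\alpha)$, then $\overline\pi_wz_\alpha=\overline\pi_{ws_j}\overline\pi_jz_\alpha=0$. Thus the nonzero vectors among these are exactly the $\overline\pi_wz_\alpha$ with $D(w)=D(\alpha)$, i.e. $w\in[w_0(\alpha),w_1(\alpha)]$; for such $w$, $\overline\pi_wz_\alpha=\overline\pi_we^{\lambda_{D(w)}}$ lies in the Step~1 basis, so these vectors are linearly independent and form a basis of $M_\alpha$. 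As the Step~1 basis of the coinvariant algebra is the disjoint union over $\alpha\models r+1$ of $\{\overline\pi_wz_\alpha:w\in[w_0(\alpha),w_1(\alpha)]\}$, we get $\FF[\Lambda]/(a_1,\dots,a_r)=\bigoplus_\alpha M_\alpha$.

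\emph{Step 3 (identification with $\P_\alpha$, and the main obstacle).} It remains to identify $M_\alpha$ with $\P_\alpha=H_W(0)\cdot T_{w_0(\alpha)}T'_{w_0(\alpha^c)}$. Both are cyclic quotients of the left regular module, via $h\mapsto hv_\alpha$ and $h\mapsto hT_{w_0(\alpha)}T'_{w_0(\alpha^c)}$, so it suffices to check that the two kernels coincide; equivalently, that $T_wv_\alpha$ and $T_wT_{w_0(\alpha)}T'_{w_0(\alpha^c)}$ vanish for the same $w$. Using $T_uT_v=(-1)^{\ell(u)+\ell(v)-\ell(u\ast v)}T_{u\ast v}$ in $H_W(0)$, both equal $(-1)^{\ell(w)+\ell(w_0(\alpha))-\ell(w\ast w_0(\alpha))}$ times $\overline\pi_{w\ast w_0(\alpha)}z_\alpha$, respectively $T_{w\ast w_0(\alpha)}T'_{w_0(\alpha^c)}$. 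By Step~2 the former is $0$ iff $w\ast w_0(\alpha)\notin[w_0(\alpha),w_1(\alpha)]$; and since $T_jT'_{w_0(\alpha^c)}=(T'_j-1)T'_{w_0(\alpha^c)}=0$ for $j\in D(\alpha^c)$, the latter is $0$ iff $w\ast w_0(\alpha)$ has a right descent in $D(\alpha^c)$, i.e. iff $w\ast w_0(\alpha)\notin[w_0(\alpha),w_1(\alpha)]$ — the same condition. Hence the kernels agree and $\overline\pi_wz_\alpha\mapsto T_wT'_{w_0(\alpha^c)}$ is an $H_W(0)$-isomorphism $M_\alpha\xrightarrow{\sim}\P_\alpha$; summing over $\alpha$ and invoking Norton's decomposition $H_W(0)=\bigoplus_\alpha\P_\alpha$ gives that $\FF[\Lambda]/(a_1,\dots,a_r)$ is the regular representation of $H_W(0)$. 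I expect the main obstacle to be Step~1 — setting up the shape-order unitriangularity cleanly, in particular nailing subadditivity of shape and verifying that re-expanding lower-shape corrections through (\ref{DesAlgorithm}) and multiplying by powers of the $a_i$ never escapes the strictly-lower-shape region; after that, everything is the same bookkeeping as in type $A$, with a secondary point of care being the precise form of the $H_W(0)$-action on Norton's basis of $\P_\alpha$ used above.
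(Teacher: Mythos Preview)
Your proposal is correct and follows essentially the same route as the paper's proof: both use Lemma~\ref{DesPolyExpansion} together with the Garsia--Stanton straightening (\ref{DesAlgorithm}) and subadditivity of shape to show the Demazure atoms form a basis, then argue exactly as in the type $A$ case that $H_W(0)\cdot\overline\pi_{w_0(\alpha)}z_\alpha$ has basis $\{\overline\pi_wz_\alpha:w\in[w_0(\alpha),w_1(\alpha)]\}$ via $\overline\pi_jz_\alpha=0$ for $j\notin D(\alpha)$, and identify it with $\P_\alpha$ by $\overline\pi_wz_\alpha\mapsto T_wT'_{w_0(\alpha^c)}$. Your Step~3 is more explicit than the paper (which simply asserts the map is an isomorphism), and the phrase ``equivalently, that $T_wv_\alpha$ and $T_wT_{w_0(\alpha)}T'_{w_0(\alpha^c)}$ vanish for the same $w$'' slightly overstates what is needed---but your actual computation, showing both are the \emph{same} sign times corresponding basis elements, is exactly what makes the bijection $H_W(0)$-equivariant.
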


\begin{proof}
By Lemma \ref{DesPolyExpansion}, if one replaces the descent
monomial $z_w$ with the Demazure atom $\overline\pi_w e^{\lambda_{D(w)}}$ in
(\ref{DesAlgorithm}), the extra terms produced are of the form 
\[
c_{d,w}a_1^{d_1}\cdots a_r^{d_r}e^\mu
\]
where $d=(d_1,\ldots,d_r)$ and $\mu$ are weak compositions, and  $w$ is an element in $W$, satisfying $\lambda_d+\lambda_{D(w)}=\lambda$ and $[\mu]<\lambda_{D(w)}$. By the definition of $a_1,\ldots,a_r$, one expands each term above as a linear combination of the monomials
\[
e^{d_1w_1\lambda_1}\cdots e^{d_r w_r \lambda_r}e^\mu,\quad w_i\in W/W_{i^c}.
\]
There exists $w\in W$ such that
\begin{eqnarray*}
&&\left[\mu+ d_1w_1\lambda_1+\cdots+d_r w_r \lambda_r \right] \\
&=& w(\mu+d_1w_1\lambda_1+\cdots+d_r w_r \lambda_r ) \\
&\leq& [\mu]+ d_1\lambda_1+\cdots+d_r \lambda_r \\
&<& \lambda_{D(w)}+\lambda_d=\lambda.
\end{eqnarray*}
By induction on the shapes of the monomials, one shows that the Demazure atoms $\overline\pi_w\lambda_{D(w)}$ for all $w$ in $W$ form an $\FF[\Lambda]^W$-basis for $\FF[\Lambda]$, giving an $\FF$-basis for the coinvariant algebra $\FF[\Lambda]/(a_1,\ldots,a_r)$.

Let $\alpha\models r+1$. For any $u$ in $W$, using $\overline\pi_i^2=-\overline\pi_i$ one shows by induction that $\overline\pi_u\overline\pi_{w_0(\alpha)}=\overline\pi_w$ for some $w\geq w_0(\alpha)$ in the (left) weak order, which implies $D(w)\supseteq D(\alpha)$. On the other hand, if there exists $j\in D(w)\setminus D(\alpha)$, then $\overline\pi_w z_\alpha =0$
since $\overline\pi_j z_\alpha=0$ by (\ref{DemaWt}). Hence $H_W(0)\cdot\overline\pi_{w_0(\alpha)} z_\alpha$ has a basis $\{\overline\pi_w z_\alpha :w\in[w_0(\alpha),w_1(\alpha)]\}$, and is isomorphic to $\P_\alpha$ via $\overline\pi_w z_\alpha \mapsto T_wT'_{w_0(\alpha^c)}$ for all $w\in[w_0(\alpha),w_1(\alpha)]$.
\end{proof}

\begin{remark}
Garsia and Stanton \cite{GarsiaStanton} pointed out a way to reduce the descent monomials in $\FF[\Lambda]$ to the descent monomials in $\FF[{\bx}]$ for type $A$. However, it does not give Theorem \ref{CoinvariantsA} directly from Theorem \ref{WeylCoinvariants}; instead, one should consider the Demazure operators on $\FF[X(T)]$ where $X(T)$ is the character group of the subgroup $T$ of diagonal matrices in $GL(n,\FF)$.
\end{remark}

\section{Flag varieties}\label{SBG}

In this section we assume $\FF$ is a field of characteristic $p>0$ and study the action of the $0$-Hecke algebras on the (complete) flag varieties. Let $G$ be a finite group of Lie type over a finite field $\FF_q$ of characteristic $p$, with Borel subgroup $B$ and Weyl group $W$. %with split $BN$-pair of characteristic $p>0$
Assume that $W$ is generated by simple reflections $s_1,\ldots,s_r$. Every composition $\alpha$ of $r+1$ corresponds to a \emph{parabolic subgroup} $P_\alpha:=BW_{D(\alpha)^c}B$ of $G$. The \emph{partial flag variety} $1^G_{P_\alpha}$ is the induction of the trivial representation of $P_\alpha$ to $G$, or in other words, the $\FF$-span of all right $P_\alpha$-cosets in $G$. Taking $\alpha=1^{r+1}$ we have the \emph{(complete) flag variety} $1_B^G$.  

For type $A$, one has $G=GL(n,\mathbb F_q)$, and if $\alpha=(\alpha_1,\ldots,\alpha_\ell)$ is a composition of $n$, then $P_\alpha$ is the group of all block upper triangular matrices with invertible diagonal blocks of sizes $\alpha_1,\ldots,\alpha_\ell$. Using the action of $G$ on the vector space $V=\FF^n$, one can identify $1_{P_\alpha}^G$ with the $\FF$-span of all partial flags of subspaces $0\subset V_1\subset\cdots\subset V_\ell=V$ satisfying $\dim V_i = \alpha_i$ for $i=1,\ldots,\ell$; in particular, $1_B^G$ is the $\FF$-span of all complete flags of $V$. 

\subsection{$0$-Hecke algebra action on $1_B^G$}%\label{1BGKuhn} 
Given a subset $H\subseteq G$, let $\overline H=\sum_{h\in H}h$ in $\ZZ G$. Then $1_B^G=\overline B\cdot \FF G$. By work of  Kuhn \cite{Kuhn}, the endomorphism ring ${\rm End}_{\ZZ G}(\overline B\cdot\ZZ G)$ has a basis $\{f_w:w\in W\}$, with $f_w$ given by
\[
f_w(\overline B)=\overline {BwB}=\overline U_w w\overline B
\]
where $U_w$ be the product of the root subgroups of those positive roots which are sent to negative roots by $w^{-1}$ (see e.g.~\cite[Proposition 1.7]{DigneMichel}). The endomorphism ring ${\rm End}_{\ZZ G}(\overline B\cdot\ZZ G)$ is isomorphic to the Hecke algebra $H_W(q)$ of $W$ with parameter $q=|U_{s_i}|$, since the relations satisfied by $\{f_w:w\in W\}$ are the same as those satisfied by the standard basis for $H_W(q)$. Working over a field $\FF$ of characteristic $p$ in which $q=0$, we obtain a $G$-equivariant action of the $0$-Hecke algebra $H_W(0)$ on $1_B^G$ by 
\[
T_w(\overline Bg):=\overline{BwB}g,\quad \forall g\in G,\ \forall w\in W.
\]
We will use left cosets in the next subsection, and in that case there is a similar right $H_W(0)$-action.

Given a finite dimensional filtered $H_W(0)$-module $Q$ and a composition $\alpha$ of $r+1$, define $Q_\alpha$ to be the $\FF$-subspace of the elements in $Q$ that are annihilated by $T_j$ for all $j\notin D(\alpha)$, i.e. 
\[
Q_\alpha:=\bigcap_{j\in D(\alpha)^c} \ker T_j.
\]
The next lemma gives the simple composition factors of $Q$ by inclusion-exclusion. We do not need any (nontrivial) filtration for $Q$ in this section, but we will need it in the next section.

%in which the (graded) characteristic map Ch is a natural extension of that of type $A$, with $\{F_\alpha:\alpha\models r+1\}$ being a linearly independent set. In other work of the author, it is found that these $F_\alpha$'s can be taken to be the \emph{fundamental quasisymmetric functions of type $B$} (Chow~\cite{Chow}), and the type $D$ case is being investigated.

\begin{lemma}\label{Ft}
Given a finite dimensional filtered $H_W(0)$-module $Q$ and a composition $\alpha$ of $r+1$,  the graded multiplicity of the simple $H_W(0)$-module $\C_\alpha$ among the composition factors of $Q$ is 
\[
c_\alpha (Q)=\sum_{\beta\cleq\alpha}(-1)^{\ell(\alpha)-\ell(\beta)}{\rm Hilb}(Q_\beta,t).
\]
%where $\beta\cleq\alpha$ means $\beta$ is a composition of $r+1$ satisfying $D(\beta)\subseteq D(\alpha)$.
\end{lemma}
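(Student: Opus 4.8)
The plan is to exploit the fact that the functor $Q \mapsto Q_\alpha = \bigcap_{j \in D(\alpha)^c} \ker T_j$ is, on a one-dimensional simple module $\C_\beta$, easy to evaluate, and then to promote this computation to all finite-dimensional modules by dévissage along a composition series. First I would record the key local computation: on $\C_\beta$, the generator $T_j$ acts as $-1$ if $j \in D(\beta)$ and as $0$ if $j \notin D(\beta)$. Hence $(\C_\beta)_\alpha = \bigcap_{j \in D(\alpha)^c} \ker T_j|_{\C_\beta}$ is all of $\C_\beta$ when $D(\alpha)^c \subseteq D(\beta)^c$, i.e.\ when $D(\beta) \subseteq D(\alpha)$, i.e.\ when $\beta \cleq \alpha$; otherwise some $T_j$ with $j \in D(\alpha)^c \cap D(\beta)$ acts as $-1$ and kills everything, so $(\C_\beta)_\alpha = 0$. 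In the graded/filtered setting, if $\C_\beta$ sits in filtration degree $d$ it contributes $t^d$ to $\mathrm{Hilb}((\C_\beta)_\alpha, t)$ exactly when $\beta \cleq \alpha$.

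Next I would argue that $\mathrm{Hilb}(Q_\beta, t)$ is additive on short exact sequences of filtered $H_W(0)$-modules, or at least that the relevant alternating sum behaves as if it were. The subtlety is that $(-)_\alpha$, being a left-exact kernel functor, is not exact in general, so $\dim Q_\alpha$ need not be the sum over composition factors. The standard fix — and the step I expect to be the main obstacle — is to observe that over $H_W(0)$ every simple module is one-dimensional, so a composition series of $Q$ realizes $Q$ as an iterated extension of the $\C_\beta$'s, and one checks that $\dim Q_\alpha = \sum_{\gamma} \dim(\C_\gamma)_\alpha = \#\{\text{composition factors } \C_\gamma \text{ of } Q \text{ with } \gamma \cleq \alpha\}$. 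This last equality is where care is needed: in an extension $0 \to L \to Q \to N \to 0$ one always has $\dim Q_\alpha \le \dim L_\alpha + \dim N_\alpha$, but equality must be justified. Here one can use that $T_j$ acts on the one-dimensional $\C_\beta$ as a scalar ($0$ or $-1$) which is determined, so on a module with a flag whose subquotients are the $\C_\beta$, the operator $T_j$ acts with all its ``eigenvalues'' visible; more precisely, since each $T_j$ is an idempotent-up-to-sign ($T_j^2 = -T_j$), the operator $-T_j$ is idempotent and $\ker T_j$ is a direct summand as a vector space, from which one deduces that $\dim(\ker T_j|_Q)$ equals the number of composition factors on which $T_j$ acts as $0$. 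Intersecting over $j \in D(\alpha)^c$ and using that these operators can be handled one at a time (passing to $\ker T_{j_1}$ as an $H$-submodule is not automatic, so instead one works with the simultaneous combinatorics of the flag) gives $\dim Q_\alpha = \#\{\gamma : \gamma \cleq \alpha\}$ with multiplicity, graded by the filtration degree; that is, $\mathrm{Hilb}(Q_\beta, t) = \sum_{\gamma \cleq \beta} c_\gamma(Q)\, t^{d(\gamma)}$ where $c_\gamma(Q)$ is the graded multiplicity of $\C_\gamma$.

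Granting the identity $\mathrm{Hilb}(Q_\beta, t) = \sum_{\gamma \cleq \beta} c_\gamma(Q)$ (with $c_\gamma(Q) \in \mathbb{Z}[t]$ now denoting the graded multiplicity), the proof concludes by Möbius inversion on the poset of compositions of $r+1$ ordered by $\cleq$, which is isomorphic to the Boolean lattice of subsets of $[r]$ via $\alpha \mapsto D(\alpha)$. The Möbius function of the Boolean lattice is $\mu(D(\beta), D(\alpha)) = (-1)^{|D(\alpha)| - |D(\beta)|} = (-1)^{\ell(\alpha) - \ell(\beta)}$, since $|D(\alpha)| = \ell(\alpha) - 1$. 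Inverting $\mathrm{Hilb}(Q_\alpha, t) = \sum_{\beta \cleq \alpha} c_\beta(Q)$ therefore yields
\[
c_\alpha(Q) = \sum_{\beta \cleq \alpha} (-1)^{\ell(\alpha) - \ell(\beta)} \mathrm{Hilb}(Q_\beta, t),
\]
which is exactly the claimed formula. The only genuine work is the middle paragraph — verifying the inclusion-exclusion count $\dim Q_\alpha$ is insensitive to the extension structure — and I would handle it by induction on the length of a composition series together with the scalar-action description of the $T_j$ on simples, being careful that the filtration degrees are tracked correctly so the identity holds at the level of Hilbert series in $t$ and not merely dimensions.
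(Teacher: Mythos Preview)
Your overall architecture---compute $(\C_\gamma)_\alpha$ on simples, show that $\mathrm{Hilb}(Q_\alpha,t)$ is additive along a composition series, then M\"obius-invert over the Boolean lattice---is exactly the paper's strategy. The issue is the middle step, which you correctly flag as the real work but do not actually carry out.

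Two concrete problems. First, $-T_j$ is \emph{not} idempotent: from $T_j^2=-T_j$ one gets $(-T_j)^2=T_j^2=-T_j\ne -T_j$ in general. What is idempotent is $T'_j=T_j+1$; equivalently $T_j$ is diagonalizable with eigenvalues $0,-1$. Second, even granting diagonalizability of each $T_j$, the $T_j$ for different $j$ do not commute, so you cannot simultaneously diagonalize them, and your sentence about ``simultaneous combinatorics of the flag'' is not an argument. In an extension $0\to Q'\to Q\to\C_\gamma\to0$ one does not automatically get $\dim Q_\alpha=\dim Q'_\alpha+\dim(\C_\gamma)_\alpha$.

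The paper closes this gap by an explicit lift: given $z\in Q$ mapping to a generator of $\C_\gamma$, set $z'=T'_u z$ with $u=w_0(D(\gamma)^c)$. Then $z'\equiv z\pmod{Q'}$ and $T_iz'=0$ for all $i\notin D(\gamma)$, so one checks directly that $Q_\beta=Q'_\beta\oplus\FF z'$ when $\gamma\cleq\beta$ and $Q_\beta=Q'_\beta$ otherwise. An equivalent (and slicker) repair of your argument is to observe that $Q_\alpha=\mathrm{im}\bigl(T'_{w_0(\alpha^c)}\bigr)$: indeed $T'_jT'_{w_0(\alpha^c)}=T'_{w_0(\alpha^c)}$ for $j\in D(\alpha)^c$, and conversely any $q\in Q_\alpha$ is fixed by each such $T'_j$ hence by $T'_{w_0(\alpha^c)}$. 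Since $T'_{w_0(\alpha^c)}$ is idempotent, $\dim Q_\alpha=\mathrm{tr}\bigl(T'_{w_0(\alpha^c)}|_Q\bigr)$, and trace is additive on short exact sequences, giving the desired additivity immediately. Either fix turns your outline into a complete proof.
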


\begin{proof}
Let $0=Q_0\subset Q_1\subset\cdots\subset Q_k=Q$ be a composition series. We induct on the composition length $k$. The case $k=0$ is trivial. Assume $k>0$ below.
%If $k>0$ then by induction hypothesis the graded multiplicity of $\C_\alpha$ among the simple factors of the submodule $Q'=Q_{k-1}$ is $c_\alpha (Q')$.

Suppose $Q/Q'\cong \C_\gamma$ for some $\gamma\models r+1$, i.e. there exists an element $z$ in $Q\setminus Q'$ satisfying
\[
T_iz\in\left\{\begin{array}{ll}
-z+Q', &{\rm if}\ i\in D(\gamma),\\
Q', &{\rm if}\ i\notin D(\gamma).
\end{array}\right. 
\]
Let $u$ be the longest element of the parabolic subgroup $W_{D(\gamma)^c}$ of $W$, and let 
\[
z'=T'_uz = \sum_{w\in W_{D(\gamma)^c}}T_wz.
\]
Since $D(w)\subseteq D(\gamma)^c$ for all $w$ in the above sum, we have $z'\in z+Q'$. Then any element in 
\[
Q=Q'\oplus\FF z=Q'\oplus\FF z'
\]
can be written as $y+az'$ for some $y\in Q'$ and $a\in\FF$. Since $D(u^{-1})=D(\gamma)^c$, one has $T_iz'=T_iT'_uz=0$ for all $i\notin D(\gamma)$.  Consider an arbitrary composition $\beta$ of $r+1$. 

If $\gamma\,\cleq\,\beta$ then for any  $i\notin D(\beta)$ we must have $i\notin D(\gamma)$ and thus $T_i(y+az')=T_iy$. It follows that $T_i(y+az')=0$ if and only if $T_iy=0$, i.e. $Q_\beta =Q'_\beta \oplus\FF z'$.

If $\gamma\!\not\!\!\cleq\,\beta$ then there exists $i\in D(\gamma)\setminus D(\beta)$. Using $z'\in z+Q'$ we have 
\[
T_i(y+az')=T_iy+aT_iz'\in -az+Q'. 
\]
If $T_i(y+az')=0$ then $a=0$. This implies $Q_\beta=Q'_\beta$.

It follows that
\begin{eqnarray*}
c_\alpha (Q)&=&\sum_{\beta\cleq\alpha}(-1)^{\ell(\alpha)-\ell(\beta)}{\rm Hilb}(Q_\beta,t)\\
&=&\sum_{\beta\cleq \alpha}(-1)^{\ell(\alpha)-\ell(\beta)}{\rm Hilb}(Q'_\beta,t)
+\sum_{\gamma\cleq\beta\cleq\alpha}(-1)^{\ell(\alpha)-\ell(\beta)}t^{\deg z'}\\
&=&c_\alpha (Q')+\delta_{\alpha\gamma}\cdot t^{\deg z'}.
\end{eqnarray*}

On the other hand, by induction hypothesis, the graded multiplicity of $\C_\alpha$ in the composition factors of $Q$ is also $c_\alpha (Q')+\delta_{\alpha\gamma}\cdot t^{\deg z'}$. Hence we are done.
\end{proof}

\begin{corollary}\label{cor:Ft}
If $Q$ is a finite dimensional filtered $H_n(0)$-module then
\[
\mathrm{Ch}_t(Q) = \sum_{\alpha\models r+1} {\rm Hilb}(Q_\alpha ,t) M_\alpha.
\]
Consequently, $\mathrm{Ch}_t(Q)\in\QSym[t]$ is a symmetric function, i.e. it lies in $\Sym[t]$, if and only if 
\[
{\rm Hilb}(Q_\alpha ,t)={\rm Hilb}(Q_\beta,t)\quad \textrm{whenever $\beta$ is a rearrangement of $\alpha$.}
\]
\end{corollary}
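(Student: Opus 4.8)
The plan is to deduce Corollary~\ref{cor:Ft} directly from Lemma~\ref{Ft} together with the two characteristic-map facts recalled in \S\ref{SRepH}: that $\mathrm{Ch}_t(Q)=\sum_\alpha c_\alpha(Q)F_\alpha$ by definition of the graded quasisymmetric characteristic, and that $F_\alpha=\sum_{\alpha\cleq\beta}M_\beta$.

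\medskip

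First I would substitute the formula from Lemma~\ref{Ft} into $\mathrm{Ch}_t(Q)=\sum_{\alpha\models r+1}c_\alpha(Q)F_\alpha$, obtaining
\[
\mathrm{Ch}_t(Q)=\sum_{\alpha\models r+1}\left(\sum_{\beta\cleq\alpha}(-1)^{\ell(\alpha)-\ell(\beta)}{\rm Hilb}(Q_\beta,t)\right)F_\alpha.
\]
Next I would interchange the order of summation so that ${\rm Hilb}(Q_\beta,t)$ is the outer variable, collecting the coefficient of each ${\rm Hilb}(Q_\beta,t)$ as $\sum_{\beta\cleq\alpha}(-1)^{\ell(\alpha)-\ell(\beta)}F_\alpha$. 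The key computation is then the identity $\sum_{\beta\cleq\alpha}(-1)^{\ell(\alpha)-\ell(\beta)}F_\alpha=M_\beta$; this is just Möbius inversion on the poset of compositions under $\cleq$ (equivalently, on the Boolean lattice of subsets of $[r]$) applied to the defining relation $F_\alpha=\sum_{\alpha\cleq\beta}M_\beta$. Writing $\ell(\alpha)=n-|D(\alpha)|$ for a composition of $n=r+1$ turns $(-1)^{\ell(\alpha)-\ell(\beta)}$ into $(-1)^{|D(\alpha)|-|D(\beta)|}$, which is the Boolean Möbius function, so the inversion goes through verbatim. This yields $\mathrm{Ch}_t(Q)=\sum_{\beta\models r+1}{\rm Hilb}(Q_\beta,t)M_\beta$, the first assertion.

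\medskip

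For the second assertion I would recall that $\Sym\hookrightarrow\QSym$ has image spanned by the $m_\lambda=\sum_{\lambda(\alpha)=\lambda}M_\alpha$, i.e.\ a quasisymmetric function $\sum_\beta f_\beta M_\beta$ (with $f_\beta\in\FF[t]$) is symmetric precisely when $f_\beta$ depends only on the underlying partition $\lambda(\beta)$. Applying this with $f_\beta={\rm Hilb}(Q_\beta,t)$ gives exactly the stated criterion: $\mathrm{Ch}_t(Q)\in\Sym[t]$ iff ${\rm Hilb}(Q_\alpha,t)={\rm Hilb}(Q_\beta,t)$ whenever $\beta$ is a rearrangement of $\alpha$.

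\medskip

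I do not anticipate a serious obstacle here; the only mild point requiring care is the bookkeeping in the Möbius-inversion step — making sure the sign $(-1)^{\ell(\alpha)-\ell(\beta)}$ matches the Boolean Möbius function and that the poset direction in $F_\alpha=\sum_{\alpha\cleq\beta}M_\beta$ is the one being inverted — but this is routine once one records that $\alpha\cleq\beta$ means $D(\alpha)\subseteq D(\beta)$ and $\ell(\alpha)-\ell(\beta)=|D(\alpha)|-|D(\beta)|$.
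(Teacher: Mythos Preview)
Your approach is correct and coincides with the paper's one-line proof (``Apply inclusion-exclusion to the previous lemma''); you have simply spelled out the M\"obius-inversion step explicitly. One small slip: for a composition $\alpha\models n$ one has $\ell(\alpha)=|D(\alpha)|+1$, not $n-|D(\alpha)|$, but since only the difference $\ell(\alpha)-\ell(\beta)=|D(\alpha)|-|D(\beta)|$ is used, the argument goes through unchanged.
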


\begin{proof}
Apply inclusion-exclusion to the previous lemma.
\end{proof}

\begin{remark}
Lemma~\ref{Ft} and Corollary~\ref{cor:Ft} hold for an arbitrary field $\FF$.
\end{remark}
%For any subset $L$ of $[r]$, one has the decomposition
%$W=W^L\cdot W_L$, which implies
%\begin{eqnarray*}
%\overline G&=&\sum_{w^L\in W^L}\overline{U_{w^L}w^L}\cdot
%\sum_{w_L\in W_L}\overline{U_{w_L}w_L}\cdot\overline B\\
%&=&\sum_{w^L\in W^L}\overline{U_{w^L}w^L}\cdot\overline{BW_LB}.
%\end{eqnarray*}
%Taking $L=D(\alpha)^c$ one has
%$$|G/P_\alpha |=\sum_{w\in W:D(w)\subseteq D(\alpha)}|U_w|.$$

\begin{theorem}%\label{F1BG}
The multiplicity of $\C_\alpha$ among the simple composition factors of $1^G_B$ is
\[ c_\alpha(1_B^G) = \sum_{w\in W: D(w^{-1})=D(\alpha)}q^{\ell(w)}. \]
\end{theorem}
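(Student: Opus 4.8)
The plan is to compute $c_\alpha(1_B^G)$ by inclusion–exclusion from the subspaces $(1_B^G)_\beta=\bigcap_{j\notin D(\beta)}\ker(T_j\colon 1_B^G\to 1_B^G)$, exactly as in Lemma~\ref{Ft}. Since $1_B^G$ is finite-dimensional, Lemma~\ref{Ft} applied with the trivial filtration (equivalently, read at $t=1$) gives
\[
c_\alpha(1_B^G)=\sum_{\beta\cleq\alpha}(-1)^{\ell(\alpha)-\ell(\beta)}\dim(1_B^G)_\beta ,
\]
so the whole problem reduces to identifying $(1_B^G)_\beta$ for each $\beta\cleq\alpha$, i.e. for each subset $D(\beta)\subseteq D(\alpha)$.

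The key claim is that $(1_B^G)_\beta=\overline{P_\beta}\cdot\FF G=1^G_{P_\beta}$, the partial flag variety attached to $\beta$. Write $J=D(\beta)^c$, so $W_J$ is the parabolic cut out by the generators we annihilate, $P_\beta=BW_JB$, and $w_0(J)$ is the longest element of $W_J$; recall $\ker T_j=\{x:T'_jx=x\}$ since $T_j=T'_j-1$. For the inclusion $\overline{P_\beta}\cdot\FF G\subseteq(1_B^G)_\beta$: for $j\in J$ the minimal parabolic $P_{\{j\}}:=B\sqcup Bs_jB=BW_{\{j\}}B$ lies in $P_\beta$, hence $P_{\{j\}}P_\beta=P_\beta$, and since $T'_j(\overline B\,h)=\overline{P_{\{j\}}}\,h$ in the coset basis $\{\overline B\,h\}$ of $1_B^G$, a short count of $B$-cosets yields $T'_j(\overline{P_\beta}\,h)=|P_{\{j\}}/B|\cdot\overline{P_\beta}\,h=(1+q)\,\overline{P_\beta}\,h=\overline{P_\beta}\,h$ because $q=0$ in $\FF$; thus $T_j$ kills $\overline{P_\beta}\cdot\FF G$ for every $j\in J$. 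For the reverse inclusion: $x=T'_jx$ for all $j\in J$ forces $x=T'_{w_0(J)}x$ (peel simple reflections off a reduced word of $w_0(J)$, using $(T'_j)^2=T'_j$), and since $T'_{w_0(J)}$ is idempotent we get $(1_B^G)_\beta=\operatorname{Im}T'_{w_0(J)}$. Now invoke the unitriangular relation $T'_w=\sum_{u\le w}T_u$ in $H_W(0)$ (sum over Bruhat order; an easy induction on $\ell(w)$) together with $\{u:u\le w_0(J)\}=W_J$ to obtain
\[
T'_{w_0(J)}(\overline B)=\sum_{u\in W_J}T_u(\overline B)=\sum_{u\in W_J}\overline{BuB}=\overline{BW_JB}=\overline{P_\beta},
\]
whence $T'_{w_0(J)}(\overline B\,h)=\overline{P_\beta}\,h$ for all $h\in G$ and $\operatorname{Im}T'_{w_0(J)}=\overline{P_\beta}\cdot\FF G$, proving the claim.

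Granting the claim, $\dim(1_B^G)_\beta=|P_\beta\backslash G|=|G/P_\beta|$, and the Bruhat decomposition $G=\bigsqcup_{w\in W}BwB$ with $|BwB/B|=q^{\ell(w)}$ gives $|G/P_\beta|=\sum_{w:\,D(w)\subseteq D(\beta)}q^{\ell(w)}$; replacing $w$ by $w^{-1}$ and using $\ell(w)=\ell(w^{-1})$ rewrites this as $\sum_{w:\,D(w^{-1})\subseteq D(\beta)}q^{\ell(w)}$. Substituting into the inclusion–exclusion formula and exchanging the order of summation, the coefficient of $q^{\ell(w)}$, for each $w$ with $D(w^{-1})\subseteq D(\alpha)$, becomes
\[
\sum_{D(w^{-1})\subseteq S\subseteq D(\alpha)}(-1)^{|D(\alpha)|-|S|},
\]
the Möbius function of the Boolean lattice, which equals $1$ if $D(w^{-1})=D(\alpha)$ and $0$ otherwise. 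Hence $c_\alpha(1_B^G)=\sum_{w\in W:\,D(w^{-1})=D(\alpha)}q^{\ell(w)}$, as desired.

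The main obstacle is the reverse inclusion of the key claim — showing the joint fixed space of the $T'_j$ ($j\in J$) is exactly the honest partial flag variety $1^G_{P_\beta}$, not something larger — and it hinges on the clean identity $T'_{w_0(J)}(\overline B)=\overline{P_\beta}$ inside $1_B^G$. The $\supseteq$ inclusion and the final Möbius inversion are routine. If one prefers to avoid quoting $T'_w=\sum_{u\le w}T_u$, the identity $T'_{w_0(J)}(\overline B)=\overline{P_\beta}$ can instead be proved by induction on $|W_J|$, peeling off one simple reflection at a time and using $T'_{\langle s_j\rangle}(\overline B)=\overline{P_{\{j\}}}$ together with the absorption $P_{\{j\}}P_J=P_J$.
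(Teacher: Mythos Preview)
Your proof is correct and follows essentially the same approach as the paper: both identify $(1_B^G)_\beta = 1^G_{P_\beta}$ via the idempotent $T'_{w_0(J)}$ (using $T'_{w_0(J)}(\overline B)=\overline{P_\beta}$), then apply Lemma~\ref{Ft} with the trivial filtration and the Bruhat count $|G/P_\beta|=\sum_{D(w^{-1})\subseteq D(\beta)}q^{\ell(w)}$. The only differences are cosmetic---you verify the easy inclusion by a $(1+q)$ coset count rather than the identity $T_jT'_{w_0(J)}=0$, and you phrase the hard inclusion as $\ker=\operatorname{Im}T'_{w_0(J)}$ rather than directly writing $\overline Bg=\overline{P_\beta}g$.
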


\begin{proof}
Let $\overline B g$ be an element in $1^G_B$ where $g\in\FF G$. If it is annihilated by $T_j$ for all $j\in D(\alpha)^c$, then
$\overline{BwBg}=T_w(\overline Bg)=0$ for all $w$ with $D(w)\cap D(\alpha)^c\ne\emptyset$, and in particular, for all $w$ in $W_{D(\alpha)^c}\setminus\{1\}$. Hence 
\[
\overline Bg=\overline{BW_{D(\alpha)^c}B}g=\overline P_\alpha g\in 1^G_{P_\alpha}.
\]
Conversely, $T_j(\overline P_\alpha g)=T_jT'_{w_0(D(\alpha)^c)}(\overline Bg)=0$ for all $j\in D(\alpha)^c$. Therefore $(1^G_B)_\alpha=1^G_{P_\alpha}$. Applying Lemma \ref{Ft} to $1_B^G$ (trivially filtered) gives
\begin{eqnarray*}
c_\alpha(1_B^G) & = & \sum_{\beta\cleq\alpha} (-1)^{\ell(\alpha)-\ell(\beta)} |P_\alpha\backslash G| \\
& = & \sum_{\beta\cleq\alpha} (-1)^{\ell(\alpha)-\ell(\beta)} \sum_{w\in W:D(w^{-1})\subseteq D(\alpha)}|U_w| \\
& = &  \sum_{w\in W:D(w^{-1}) = D(\alpha)} q^{\ell(w)}.
\end{eqnarray*}
\end{proof}

\begin{corollary}%\label{cor:1BG}
If $G=GL(n,\mathbb F_q)$ then $\mathrm{Ch}\left(1^G_B\right)=\widetilde H_{1^n}(x;q)$.
%= \sum_{\lambda\vdash n}\qbin{n}{\lambda}{q} m_\lambda  = \sum_{\lambda\vdash n} \sum_{P\in{\rm SYT(\lambda)}} q^{\maj(P)} s_\lambda = \sum_{\lambda\vdash n} q^{n(\lambda)}\frac{[n]!_q}{\prod_{u\in\lambda}[h_u]_q} s_{\lambda}.
\end{corollary}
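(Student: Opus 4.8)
The plan is to obtain this as a specialization of the preceding theorem combined with Corollary~\ref{cor:coinv}. In type $A$ one has $W=\SS_n$, $G=GL(n,\FF_q)$, and $\ell(w)=\inv(w)$ for $w\in\SS_n$, so the preceding theorem reads $c_\alpha(1_B^G)=\sum_{w:\,D(w^{-1})=D(\alpha)}q^{\inv(w)}$. Since $\mathrm{Ch}$ is additive on the Grothendieck group and sends $[\C_\alpha]$ to $F_\alpha$, this gives
\[
\mathrm{Ch}(1_B^G)=\sum_{\alpha\models n}\Bigl(\ \sum_{w\in\SS_n:\ D(w^{-1})=D(\alpha)}q^{\inv(w)}\Bigr)F_\alpha .
\]

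First I would rewrite each inner sum over the \emph{ordinary} descent class: substituting $w\mapsto w^{-1}$ and using $\inv(w^{-1})=\inv(w)$ turns it into $\sum_{w:\,D(w)=D(\alpha)}q^{\inv(w)}=r_\alpha(q)$, the $q$-ribbon number. Next I would invoke the Foata--Sch\"utzenberger equidistribution of $\inv$ and $\maj$ on inverse descent classes (recalled in Section~\ref{SRibbon}), which gives $r_\alpha(q)=\sum_{\tau\in\SYT(\alpha)}q^{\maj(\tau)}$. Hence
\[
\mathrm{Ch}(1_B^G)=\sum_{\alpha\models n}\ \sum_{\tau\in\SYT(\alpha)}q^{\maj(\tau)}\,F_\alpha,
\]
which is exactly the expression appearing in Corollary~\ref{cor:coinv}(ii) with the formal variable $t$ set equal to the prime power $q$; by Corollary~\ref{cor:coinv}(iii) it equals $\widetilde H_{1^n}(x;q)$, which finishes the argument.

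I do not expect a genuine obstacle here: the corollary is an assembly of the preceding theorem with the results of Section~\ref{SCoinvariantsA}. The only point needing care is the bookkeeping that passes between the inverse descent class $\{w:D(w^{-1})=D(\alpha)\}$ --- where the multiplicity naturally appears, through the identification $\mathrm{End}_{\ZZ G}(\overline B\cdot\ZZ G)\cong H_W(q)$ and the factors $|U_w|=q^{\ell(w)}$ --- and the ordinary descent class, and correspondingly between $\inv$ and $\maj$; both transitions are handled by $\inv(w)=\inv(w^{-1})$ together with Foata--Sch\"utzenberger. Alternatively, one may simply observe that $\sum_{\alpha}c_\alpha(1_B^G)F_\alpha$ is the Krob--Thibon series $\mathrm{Ch}_q(H_n(0))=\sum_{w\in\SS_n}q^{\inv(w)}F_{D(w^{-1})}$ evaluated at $q=|\FF_q|$, and then read $\widetilde H_{1^n}(x;q)$ off the $q\leftrightarrow t$ (Foata--Sch\"utzenberger) edge of the diagram in the introduction.
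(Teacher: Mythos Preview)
Your proposal is correct and follows essentially the same route as the paper: identify $c_\alpha(1_B^G)$ with $r_\alpha(q)$ via $\ell(w)=\inv(w)$ and the substitution $w\mapsto w^{-1}$, then invoke Corollary~\ref{cor:coinv} to conclude $\sum_\alpha r_\alpha(q)F_\alpha=\widetilde H_{1^n}(x;q)$. The explicit appeal to Foata--Sch\"utzenberger is not needed as a separate step, since Corollary~\ref{cor:coinv}(ii) already states $\mathrm{Ch}_t=\sum_\alpha r_\alpha(t)F_\alpha$ with $r_\alpha$ defined via $\inv$; you can specialize $t\mapsto q$ directly.
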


\begin{proof}
For $G=GL(n,\FF_q)$ we have $\ell(w)=\mathrm{inv}(w)$ and thus equation (\ref{eq:q-ribbon}) shows $c_\alpha(1_B^G)=r_\alpha(q)$. The result then follows from Corollary~\ref{cor:coinv}.
\end{proof}

\subsection{Decomposing the $G$-module $1_B^G$ by $0$-Hecke algebra action}

We consider the \emph{homology representations} $\chi_q^\alpha$ of $G$, which are the top homology of the \emph{type-selected Tits buildings} of $G$, for all compositions $\alpha\models r+1$. To give the explicit definitions, assume in this subsection that $1_{P_\alpha}^G$ is the $\FF$-span of the \emph{left} $P_\alpha$-cosets in $G$. Then $1_B^G$ admits a right $H_W(0)$-action defined by $g\overline B\cdot T_w = g\overline{BwB}$ for all $g\in G$ and $w\in W$. The left cosets $gP_\alpha$ for all $\alpha\models r+1$ form a poset under reverse inclusion, giving an (abstract) simplicial complex called the {\it Tits building} and denoted by $\Delta=\Delta(G,B)$. The type of a face $gP_\alpha$ is $\tau(gP_\alpha)=D(\alpha)$, and every chamber $gB$ has exactly one vertex of each type, i.e. $\Delta(G,B)$ is {\it balanced}. The chain complex of the {\it type-selected subcomplex}  
\[
\Delta_\alpha=\{F\in\Delta(G,B):\tau(F)\subseteq D(\alpha)^c\}.
\]
gives rise to an exact sequence
\begin{equation}\label{Chain}
0\rightarrow\chi^\alpha_q\rightarrow1^G_{P_\alpha} \xrightarrow{\partial}\bigoplus_{\beta\cleq_1\alpha} 1^G_{P_\beta}
\xrightarrow{\partial}\cdots\xrightarrow{\partial}1^G_G\rightarrow0
\end{equation} 
where $\beta\cleq_1\alpha$ means $\beta\models r+1$ and $D(\beta)=D(\alpha)\setminus\{i\}$ for some $i\in D(\alpha)$. The boundary maps are given by
\[
\partial: gP_\gamma \mapsto\sum_{\beta\cleq_1\gamma} \pm gP_\beta
\]
for all $\gamma\models r+1$. %For more details see Bj\"orner~\cite{Bjorner} and Mathas~\cite{Mathas} %for type $A$ are given in \cite{ReinerStanton}.
The following decomposition of (left) $G$-modules is well-known (see e.g. Smith~\cite{Smith}):
\begin{equation}\label{HomologyDecomposition}
1^G_B=\bigoplus_{\alpha\models r+1} \chi_q^\alpha.
\end{equation}
On the other hand, Norton's decomposition of the $0$-Hecke algebra $H_W(0)$ implies a decomposition of $1$ into primitive orthogonal idempotents, i.e.
\[
1=\sum_{\alpha\models r+1} h_\alpha  T_{w_0(\alpha)}T'_{w_0(\alpha^c)},\quad h_\alpha \in H_W(0).
\]
This decomposition of $1$ into primitive orthogonal idempotents is explicitly given by Berg, Bergeron, Bhargava and Saliola~\cite{Idempotent}, and is different from the one provided by Denton~\cite{Denton}. By the right action of $H_W(0)$ on $1^G_B$, we have another decomposition of $G$-modules:
\begin{equation}\label{BuildingDecomposition} 
1^G_B=\bigoplus_{\alpha\models r+1} 1^G_B\,h_\alpha T_{w_0(\alpha)}T'_{w_0(\alpha^c)}.
\end{equation} 

\begin{proposition}
The two $G$-module decompositions (\ref{HomologyDecomposition}) and (\ref{BuildingDecomposition}) are the same. 
\end{proposition}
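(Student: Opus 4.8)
The plan is to match the two decompositions summand by summand, i.e.\ to show that $1^G_B\,e_\alpha = \chi_q^\alpha$ for every composition $\alpha$ of $r+1$, where $e_\alpha := h_\alpha T_{w_0(\alpha)}T'_{w_0(\alpha^c)}$ denotes the primitive idempotent, so that (\ref{BuildingDecomposition}) reads $1^G_B = \bigoplus_\alpha 1^G_B e_\alpha$. The first step will be to express the Tits-building data inside $1^G_B$ in terms of the right $H_W(0)$-action. Since $T'_{w_0(\alpha^c)} = \sum_{w\in W_{D(\alpha^c)}}T_w$, one has $g\overline B\cdot T'_{w_0(\alpha^c)} = g\overline{P_\alpha}$, so that $1^G_B\,T'_{w_0(\alpha^c)} = 1^G_{P_\alpha}$, a $G$-submodule of $1^G_B$. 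Under this identification I want to rewrite the boundary map $\partial$ of (\ref{Chain}): if $\beta\cleq_1\alpha$ with $D(\beta) = D(\alpha)\setminus\{i\}$, then $D(\beta^c) = D(\alpha^c)\sqcup\{i\}$, hence $T'_{w_0(\beta^c)}\in T'_{w_0(\alpha^c)}H_W(0)$, and I claim the $\beta$-component $\partial_\beta\colon 1^G_{P_\alpha}\to 1^G_{P_\beta}$ satisfies $\partial_\beta(z\,T'_{w_0(\alpha^c)}) = \pm\,z\,T'_{w_0(\beta^c)}$ for every $z\in 1^G_B$. I would check this by observing that both sides are $\FF G$-module maps in $z$ and that $1^G_B$ is generated by $\overline B$ as an $\FF G$-module, for which the identity is exactly the defining recipe $\partial\colon g\overline{P_\alpha}\mapsto\sum_{\beta\cleq_1\alpha}\pm g\overline{P_\beta}$. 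This yields the description $\chi_q^\alpha = \{\,z\,T'_{w_0(\alpha^c)} : z\in 1^G_B,\ z\,T'_{w_0(\beta^c)} = 0 \text{ for all } \beta\cleq_1\alpha\,\}$.

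The second step is the containment $1^G_B\,e_\alpha\subseteq\chi_q^\alpha$. Every element of $1^G_B e_\alpha$ has the form $x = z\,T'_{w_0(\alpha^c)}$ with $z = y\,h_\alpha T_{w_0(\alpha)}$ for some $y\in 1^G_B$, so by the first step $\partial_\beta(x) = \pm\,y\,h_\alpha\,T_{w_0(\alpha)}T'_{w_0(\beta^c)}$ for each $\beta\cleq_1\alpha$. The crucial algebraic fact will be $T_{w_0(\alpha)}T'_{w_0(\beta^c)} = 0$ in $H_W(0)$: since $s_i$ belongs to both $D(\alpha)$ and $D(\beta^c)$, it is a right descent of the longest element $w_0(\alpha)$ and a left descent of the longest element $w_0(\beta^c)$, so $T_{w_0(\alpha)} = T_{w_0(\alpha)s_i}T_i$ while $T_i T'_{w_0(\beta^c)} = (T'_i - 1)T'_{w_0(\beta^c)} = T'_{w_0(\beta^c)} - T'_{w_0(\beta^c)} = 0$, using $(T'_i)^2 = T'_i$ and $T'_i T'_{w_0(\beta^c)} = T'_{w_0(\beta^c)}$. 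Therefore $\partial(x) = 0$, i.e.\ $x\in\chi_q^\alpha$.

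The third step is a dimension count that upgrades the inclusions to equalities. From $1^G_B = \bigoplus_\alpha 1^G_B e_\alpha$ (the orthogonal idempotents) and $1^G_B = \bigoplus_\alpha\chi_q^\alpha$ (equation (\ref{HomologyDecomposition})) one gets $\sum_\alpha\dim(1^G_B e_\alpha) = \dim 1^G_B = \sum_\alpha\dim\chi_q^\alpha$; combined with $\dim(1^G_B e_\alpha)\le\dim\chi_q^\alpha$ from the second step, the two direct-sum decompositions force $1^G_B e_\alpha = \chi_q^\alpha$ for every $\alpha$. In particular the idempotents appearing in (\ref{BuildingDecomposition}) are the projections of $1^G_B$ onto its homology summands.

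The step I expect to be the main obstacle is the bookkeeping in the first paragraph: keeping the complementation $\alpha\leftrightarrow\alpha^c$ and the passage from the left to the right $H_W(0)$-action straight, so that the simplicial boundary of the Tits building genuinely becomes right multiplication carrying $T'_{w_0(\alpha^c)}$ to $T'_{w_0(\beta^c)}$ up to sign. Once that translation is in place, the vanishing $T_{w_0(\alpha)}T'_{w_0(\beta^c)} = 0$ and the dimension count are both short and essentially formal.
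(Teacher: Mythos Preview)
Your proposal is correct and follows essentially the same route as the paper: one shows the inclusion $1^G_B\,h_\alpha T_{w_0(\alpha)}T'_{w_0(\alpha^c)}\subseteq\chi_q^\alpha$ by translating the boundary map into right multiplication by $T'_{w_0(\beta^c)}$ and then invoking the vanishing $T_{w_0(\alpha)}T'_{w_0(\beta^c)}=0$ (via the common index $i\in D(\alpha)\cap D(\beta^c)$), after which the two direct-sum decompositions force equality by a dimension count. The paper's write-up is slightly more concrete (expanding $\overline B\,h_\alpha T_{w_0(\alpha)}$ as $\sum_i g_i\overline B$ rather than phrasing things as $\FF G$-module maps), but the argument is the same.
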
 

\begin{proof}
Comparing (\ref{HomologyDecomposition}) with (\ref{BuildingDecomposition}) one sees that it suffices to show $1^G_Bh_\alpha T_{w_0(\alpha)}T'_{w_0(\alpha^c)}\subseteq\chi^\alpha_q.$
Assume 
\[
\overline Bh_\alpha T_{w_0(\alpha)}=\sum_i g_i\overline B,\quad g_i\in G.
\]
For any $\beta\models r+1$ we have 
\[
\overline BT'_{w_0(\beta^c)}=\overline{BW_{D(\beta)^c}B}=\overline P_\beta.
\]
Hence 
\[
\overline Bh_\alpha T_{w_0(\alpha)}T'_{w_0(\alpha^c)} = \sum_i g_i\overline P_\alpha \in 1_{P_\alpha}^G
\]
and
\[
\partial \left(\overline Bh_\alpha T_{w_0(\alpha)}T'_{w_0(\alpha^c)} \right)
= \sum_{\beta\cleq_1\alpha} \pm\sum_i g_i\overline P_\beta
= \sum_{\beta\cleq_1\alpha} \pm\overline Bh_\alpha T_{w_0(\alpha)}T'_{w_0(\beta^c)}.
\]
%\begin{eqnarray*}
%\partial \left(\overline Bh_\alpha T_{w_0(\alpha)}T'_{w_0(\alpha^c)} \right)
%&=& \sum_{\beta\cleq_1\alpha} \pm\sum_i g_i\overline P_\beta\\
%&=& \sum_{\beta\cleq_1\alpha} \pm\overline Bh_\alpha T_{w_0(\alpha)}T'_{w_0(\beta^c)}.
%\end{eqnarray*}
If $\beta\cleq_1\alpha$ then there exists $i\in D(\alpha)\cap D(\beta)^c$, and thus $T_{w_0(\alpha)}T'_{w_0(\beta^c)}=0$. Therefore we are done.
\end{proof}

One sees from (\ref{BuildingDecomposition}) that $\chi_q^\alpha$ is in general not a right $H_W(0)$-submodule of $1_B^G$. However, when $G=GL(n,\FF_q)$, one has that $\chi_q^{(n)}$ is the trivial representation and $\chi_q^{(1^n)}$ is the Steinberg representation of $G$ \cite{Khammash}, and both are right (isotypic) $H_W(0)$-modules.

%Similar result for Coxeter complex?

%E-R decompositon and shelling order?

%If $(W,S)$ is a Weyl group, the parabolic cosets $wW_J$ bijectively correspond to the subsets of weights $\{w\lambda_i:s_i\notin J\}$.  The vertices of $\Delta$ are the $W$-orbits $w\lambda_i$ of the fundamental weights $\lambda_i$.

\section{Coinvariant algebra of $(G,B)$}\label{sec:CoinvGB}

%It is easy to see that $$h_i=\prod_{v\in V_{i-1}}(x_i-v)$$ is invariant under $U$ where $V_{i-1}$ is the subspace spanned by $x_1,\ldots,x_{i-1}$ for $i=1,\ldots,n$. M.-J. Bertin \cite{Bertin} showed that $\FF[\bx]^U=\FF[h_1,\ldots,h_n]$ is a polynomial algebra, hence free over $\FF[\bx]^G$. It has a basis constructed by Campbell and Hughes \cite{CampbellHughes} as follows:
%$$\left\{h_1^{j_1}\cdots h_n^{j_n}:0\leq j_i\leq q^{n-i+1}-1\right\}.$$
%Since $B/H$ is isomorphic to the maximal torus $T$ of $G$, it follows immediately that $\FF[\bx]^B$ is a free module over $\FF[\bx]^G$ with a basis given by
%\begin{eqnarray*}
%&&\left\{h_1^{j_1}\cdots h_n^{j_n}:0\leq j_i< q^{n-i+1}-1,(q-1)\mid j_i\right\}\\
%&=&\left\{f_1^{j_1}\cdots f_n^{j_n}:0\leq j_i<1+q+\cdots+q^{n-i}\right\}.
%\end{eqnarray*}
%where $f_i=h_i^{q-1}$ for $i=1,\ldots,n$. This is a $q$-analogue of the Artin basis for the coinvariant algebra $\FF[\bx]_{\SS_n}$.

In this section we again assume $\FF$ is a field of characteristic $p>0$ and study the action of the $0$-Hecke algebra $H_n(0)$ on the coinvariant algebra $\FF[\bx]^B/(\FF[\bx]^G_+)$ of the pair $(G,B)$, where $G=GL(n,\FF_q)$ with $q$ being a power of $p$ and $B$ is the Borel subgroup of $G$.

Given a right $\FF G$-module $M$, there is an isomorphism
\begin{eqnarray*}
{\rm Hom}_{\FF G}(1_B^G,M)&\xrightarrow{\sim}& M^B,\\
\phi&\mapsto& \phi(\overline B)
\end{eqnarray*}
with inverse map given by $\phi_m(\overline B)=m$ for all $m\in M^B$. The left $H_n(0)$-action $T_w\overline B=\overline{BwB}$ on $1_B^G$ commutes with the right $G$-action and induces a left action on ${\rm Hom}_{\FF G}(1_B^G,M)$ by
\[
T_w(\phi)(\overline B)=\phi(T_{w^{-1}}\overline B) = \phi_m(\overline{Bw^{-1}B}).
\]
Hence we have a left $H_n(0)$-action on $M^B$ by
\[
T_w(m)=T_w(\phi_m)(\overline B)=\phi_m(\overline{Bw^{-1}B})
=\phi_m(\overline B w^{-1}\overline U_{w^{-1}})=mw^{-1}\overline U_{w^{-1}}.
\]

The group $G$ has a left action on $\FF[\bx]$ by linear substitution, and this can be turned into a right action by $f\cdot g=g^{-1}f$ for all $f\in\FF[\bx]$ and $g\in G$. Thus $H_n(0)$ has a left action on $\FF[\bx]^B$ by 
\[
T_w(f)=f\cdot w^{-1}\overline U_{w^{-1}}=\overline U_w wf,\quad \forall f\in \FF[\bx]^B.
\]
This action preserves the grading, and leaves the ideal $(\FF[\bx]^G_+)$ invariant: if $h_i\in\FF[\bx]^G_+$, $f_i\in\FF[\bx]^B$, then
\[
T_w\left(\sum_i h_if_i\right)=\overline U_w w \left(\sum_i h_if_i\right)
=\sum_i h_i\overline U_w w (f_i).
\]
Hence the coinvariant algebra $\FF[\bx]^B/(\FF[\bx]^G_+)$ of $(G,B)$ becomes a graded $H_n(0)$-module. 

\begin{lemma}\label{QI}
If $Q=\FF[\bx]^B/(\FF[\bx]^G_+)$ and $\alpha$ is a composition of $n$,
then 
\[
Q_\alpha := \bigcap_{j\in D(\alpha)^c}\ker T_j = \FF[\bx]^{P_\alpha}/(\FF[\bx]^G).
\]
\end{lemma}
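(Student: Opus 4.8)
The plan is to mirror the argument used for $1_B^G$ in the previous section (the proof that $(1_B^G)_\alpha = 1_{P_\alpha}^G$) but now transported through the isomorphism ${\rm Hom}_{\FF G}(1_B^G, M) \xrightarrow{\sim} M^B$ with $M = \FF[\bx]/(\FF[\bx]^G_+)$ viewed as a right $\FF G$-module, so that $\FF[\bx]^B/(\FF[\bx]^G_+) = M^B$. First I would recall that under this identification the action is $T_w(f) = \overline U_w w f$, and that $\overline P_\alpha = \overline{B W_{D(\alpha)^c} B}$ acts on $M$ in a way compatible with $\sum_{w \in W_{D(\alpha)^c}} T_w = T'_{w_0(D(\alpha)^c)}$ acting on $1_B^G$. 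The key identity to extract is that $\FF[\bx]^{P_\alpha}$ is exactly the image of $\overline P_\alpha$ acting on $\FF[\bx]^B$ (equivalently, $f \in \FF[\bx]^B$ is $P_\alpha$-invariant iff $\overline U_w w f = f$ for all $w \in W_{D(\alpha)^c}$, up to the usual normalization subtlety that in characteristic $p$ with $q = 0$ the averaging idempotent behaves differently — but here $\overline{P_\alpha}$ is being used as an operator, not divided by $|P_\alpha|$, exactly as $\overline B$ and $\overline{BwB}$ were used in Section~\ref{SBG}).

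The two inclusions would go as follows. For $\FF[\bx]^{P_\alpha}/(\FF[\bx]^G_+) \subseteq Q_\alpha$: if $f \in \FF[\bx]^{P_\alpha}$, then for any $j \notin D(\alpha)$, since $s_j \in W_{D(\alpha)^c}$ we have $T_j(f) = \overline U_{s_j} s_j f$, and using that $f$ is invariant under the parabolic $P_\alpha \supseteq B s_j B$ one gets $T_j(f) = 0$ in $M$ (this is the analogue of $T_j(\overline P_\alpha g) = 0$ in the $1_B^G$ computation). Conversely, for $Q_\alpha \subseteq \FF[\bx]^{P_\alpha}/(\FF[\bx]^G_+)$: if $\bar f \in Q_\alpha$, then $T_j \bar f = 0$ for all $j \in D(\alpha)^c$ forces $T_w \bar f = 0$ for all $w \in W_{D(\alpha)^c} \setminus \{1\}$ (since $\overline\pi$-type relations give $T_w = \pm T_{w'} T_j$ with $j \in D(\alpha)^c$ whenever $w \in W_{D(\alpha)^c}$ has a descent in $D(\alpha)^c$, which every non-identity element of that parabolic does), whence $\overline{P_\alpha} \cdot f \equiv \overline{B W_{D(\alpha)^c} B} f = \sum_{w \in W_{D(\alpha)^c}} \overline U_w w f \equiv f$ modulo $(\FF[\bx]^G_+)$ — so $\bar f$ lifts to something $P_\alpha$-fixed. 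This is exactly the $\overline Bg = \overline{BW_{D(\alpha)^c}B}g = \overline P_\alpha g$ step in the flag variety proof, run on polynomials instead of on cosets.

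The main obstacle I expect is making the passage between "$T_j \bar f = 0$ in $Q$" and "$f$ can be chosen $P_\alpha$-invariant in $\FF[\bx]^B$" fully rigorous, because $Q$ is a quotient and one must check that the class $\overline{P_\alpha} f$ (which genuinely is $P_\alpha$-invariant as an element of $\FF[\bx]^B$, being in the image of the operator $\overline{P_\alpha}$) lands in the same coset of $(\FF[\bx]^G_+)$ as $f$; this uses that the relevant $T_w$ images of $f$ all lie in $(\FF[\bx]^G_+)$, not just that they are zero in $Q$ — which is the same statement, but one has to be careful that $(\FF[\bx]^G_+)$ is $H_n(0)$-stable (already established just above the lemma). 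A secondary point is the normalization in characteristic $p$: one should note that $\FF[\bx]^{P_\alpha} = \overline{P_\alpha}\,\FF[\bx]^B / \ker$ is not literally "averaging," but since $\overline{P_\alpha}$ restricted to $\FF[\bx]^{P_\alpha}$ is multiplication by $|P_\alpha| = 0$ in $\FF$, one works instead with the combinatorial identity $\overline{B W_{D(\alpha)^c} B} = \sum_{w \in W_{D(\alpha)^c}} \overline U_w w$ and the fact that a $B$-fixed polynomial fixed by all $\overline U_w w$, $w \in W_{D(\alpha)^c}$, is $P_\alpha$-fixed — which is a statement about the Bruhat decomposition $P_\alpha = \bigsqcup_{w \in W_{D(\alpha)^c}} BwB$ and holds integrally, exactly as in Kuhn's and Digne--Michel's setup cited in Section~\ref{SBG}. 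Once these bookkeeping points are pinned down, the lemma follows by the same inclusion-exclusion-free direct argument as the $(1_B^G)_\alpha = 1_{P_\alpha}^G$ identity.
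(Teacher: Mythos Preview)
Your proposal is correct and follows essentially the same route as the paper's proof: both directions are handled exactly as you outline, with the forward inclusion using $T_j f = \overline U_{s_j} s_j f = |U_{s_j}|\cdot f = qf = 0$ for $f\in\FF[\bx]^{P_\alpha}$, and the reverse using that $T'_{w_0(D(\alpha)^c)}f = \sum_{w\in W_{D(\alpha)^c}} \overline U_w w f$ is genuinely $P_\alpha$-invariant and differs from $f$ by an element of $(\FF[\bx]^G_+)$. Your anticipated ``normalization obstacle'' dissolves once you note that in characteristic $p$ with $q=0$ the operator $T'_{w_0(D(\alpha)^c)}$ acts as the identity on $\FF[\bx]^{P_\alpha}$ (since $T_w f = q^{\ell(w)} f = 0$ for $w\neq 1$), so it is an honest projection and no division by $|P_\alpha|$ is ever needed.
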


\begin{proof}
If $f\in\FF[\bx]^{P_\alpha}$, then for all $j\notin D(\alpha)$ one has $U_{s_j}s_j\subseteq P_\alpha$ and hence
\[ T_jf=\overline U_{s_j}s_jf=|U_{s_j}|\cdot f=qf=0. \]
Conversely, a $B$-invariant polynomial $f$ gives rise to a $P_\alpha $-invariant polynomial 
\[
\sum_{gB\in P_\alpha /B}gf=\sum_{w\in W_{D(\alpha)^c}}\overline U_wwf=T'_{w_0(D(\alpha)^c)}f.
\]
If $T_jf$ belongs to the ideal $(\FF[\bx]^G_+)$ for all $j\notin D(\alpha)$, so does  $T_wf\in (\FF[\bx]^G_+)$ for all $w\in W_{D(\alpha)^c}\setminus\{1\}$. Thus $T'_{w_0(\alpha^c)} f -f \in (\FF[\bx]^G_+)$ and we are done. 
\end{proof}

\begin{theorem}
The graded quasisymmetric characteristic of the $H_n(0)$-module $\FF[\bx]^B/(\FF[\bx]^G_+)$ is
\[ 
\mathrm{Ch}_t\left(\FF[\bx]^B/(\FF[\bx]^G_+)\right) = \sum_{\alpha\models n} \qbin{n}{\alpha}{q,t} M_\alpha = \sum_{\alpha\models n} r_\alpha (q,t)F_\alpha.
\]
\end{theorem}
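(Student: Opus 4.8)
The strategy is to deduce the result from three ingredients already in place: Corollary~\ref{cor:Ft}, which expresses $\mathrm{Ch}_t$ of a finite dimensional filtered $H_n(0)$-module $Q$ through the Hilbert series of the subspaces $Q_\alpha=\bigcap_{j\in D(\alpha)^c}\ker T_j$; Lemma~\ref{QI}, which identifies these subspaces for our particular module; and the Reiner--Stanton formula $\qbin{n}{\alpha}{q,t}=\mathrm{Hilb}\bigl(\FF[\bx]^{P_\alpha}/(\FF[\bx]^G_+),t\bigr)$ recalled in Section~\ref{SRibbon}. The second equality in the statement will then be a formal change of basis in $\QSym$.

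First I would record that $Q:=\FF[\bx]^B/(\FF[\bx]^G_+)$ is a finite dimensional graded $H_n(0)$-module: the ideal $(\FF[\bx]^G_+)$ is homogeneous, the operators $T_w$ preserve degree, and $\FF[\bx]^B$ is a finitely generated module over the Noetherian ring $\FF[\bx]^G$, so the quotient is finite dimensional. Viewing the degree grading as a filtration makes each graded component an $H_n(0)$-module and each $Q_\alpha$ a graded subspace, so Corollary~\ref{cor:Ft} applies and yields
\[
\mathrm{Ch}_t(Q)=\sum_{\alpha\models n}\mathrm{Hilb}(Q_\alpha,t)\,M_\alpha .
\]
By Lemma~\ref{QI} the subspace $Q_\alpha$ is the image of $\FF[\bx]^{P_\alpha}$ in $Q$, that is, the coinvariant algebra $\FF[\bx]^{P_\alpha}/(\FF[\bx]^G_+)$ of the pair $(G,P_\alpha)$; here one uses the averaging operator $T'_{w_0(D(\alpha)^c)}$ from the proof of that lemma to see that this is an identification of graded vector spaces, so that the two Hilbert series actually coincide. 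Hence $\mathrm{Hilb}(Q_\alpha,t)=\qbin{n}{\alpha}{q,t}$ by Reiner and Stanton~\cite{ReinerStanton}, which gives the first equality $\mathrm{Ch}_t(Q)=\sum_{\alpha\models n}\qbin{n}{\alpha}{q,t}\,M_\alpha$.

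For the second equality I would use the inclusion--exclusion relation between the $(q,t)$-multinomial and $(q,t)$-ribbon numbers. Splitting the sum $\qbin{n}{\alpha}{q,t}=\sum_{w:\,D(w)\subseteq D(\alpha)}\mathrm{wt}(w;q,t)$ according to the exact descent set of $w$, and using that $\beta\mapsto D(\beta)$ is a bijection from compositions to subsets of $[n-1]$, gives $\qbin{n}{\alpha}{q,t}=\sum_{\beta\cleq\alpha}r_\beta(q,t)$. Substituting this into $\sum_\alpha\qbin{n}{\alpha}{q,t}M_\alpha$, interchanging the order of summation, and then invoking $F_\beta=\sum_{\beta\cleq\alpha}M_\alpha$ turns $\sum_\alpha\bigl(\sum_{\beta\cleq\alpha}r_\beta(q,t)\bigr)M_\alpha$ into $\sum_\beta r_\beta(q,t)\sum_{\beta\cleq\alpha}M_\alpha=\sum_\beta r_\beta(q,t)F_\beta$, which is the desired expression.

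The substantive work of the argument lives in Lemma~\ref{QI} and in the cited Hilbert series computation, so I do not expect a serious obstacle in the theorem itself; the one point that genuinely needs care is that the identification of $Q_\alpha$ with $\FF[\bx]^{P_\alpha}/(\FF[\bx]^G_+)$ is an isomorphism of \emph{graded} spaces, which is exactly why $\mathrm{Hilb}(Q_\alpha,t)$ equals the $(q,t)$-multinomial coefficient on the nose rather than only at $t=1$. Everything else is assembling previously established results and bookkeeping in $\QSym$.
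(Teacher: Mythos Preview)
Your proposal is correct and follows essentially the same route as the paper: identify $Q_\alpha$ via Lemma~\ref{QI}, invoke the Reiner--Stanton Hilbert series formula to obtain the $(q,t)$-multinomials, and then pass between the $M_\alpha$ and $F_\alpha$ bases by inclusion--exclusion. The only cosmetic difference is that the paper cites Lemma~\ref{Ft} to compute $c_\alpha(Q)=r_\alpha(q,t)$ directly, whereas you cite Corollary~\ref{cor:Ft} for the $M_\alpha$-expansion and then convert; these are equivalent formulations of the same argument.
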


\begin{proof}
Let $Q=\FF[\bx]^B/(\FF[\bx]^G_+)$ and let $\alpha\models n$. It follows from Lemma~\ref{QI} that
\[
\mathrm{Hilb}(Q_\alpha,t)={\rm Hilb}\left(\FF[\bx]^{P_\alpha }/(\FF[\bx]^G_+),t\right) = \qbin{n}{\alpha}{q,t}.
\]
Thus
\[
c_\alpha (Q)=\sum_{\beta\cleq\alpha} (-1)^{\ell(\alpha)-\ell(\beta)} \qbin{n}{\beta}{q,t}=r_\alpha (q,t).
\]
Then the result follows immediately from Lemma~\ref{Ft}.
\end{proof}

%Since $\lim_{t\rightarrow 1}r_\alpha (q,t)=r_\alpha (q)$ and $\lim_{q\rightarrow 1}r_\alpha (q)=r_\alpha$, we have
%\[ \lim_{t\rightarrow 1} \mathrm{Ch}_t\left(\FF[\bx]^B/(\FF[\bx]^G_+)\right) =\mathrm{Ch}(1^G_B), \]
%\[ \lim_{q,t\rightarrow 1} \mathrm{Ch}_t\left(\FF[\bx]^B/(\FF[\bx]^G_+)\right) =\mathrm{Ch}(H_n(0)). \]

\section{Cohomology ring of Springer fibers}\label{sec:Springer}

In Section~\ref{SCoinvariantsA} we showed that the coinvariant algebra of $\SS_n$ is an $H_n(0)$-module whose graded quasisymmetric characteristic is the modified Hall-Littlewood symmetric function indexed by the partition $1^n$. Now we generalize this result to partitions of hook shapes. 

Throughout this section a partition of $n$ is denoted by $\mu=(\mu_1,\ldots,\mu_n)$, where $0\leq\mu_1\leq\cdots\leq\mu_n$. Denote $n(\mu)=\mu_{n-1}+2\mu_{n-2}+\cdots+(n-1)\mu_1$. One can view $\mu$ as a composition by dropping all the zero parts of $\mu$. Then $n(\mu)=\maj(\mu)$, where $\maj(\alpha)=\sum_{i\in D(\alpha)}i$, for all compositions $\alpha$.

Let $V$ be an $n$-dimensional complex vector space.  Fix a nilpotent matrix $X_\mu$ whose Jordan blocks are of size $\mu_1,\ldots,\mu_\ell$. The \emph{Springer fiber} $\mathcal F_\mu$ corresponding to the partition $\mu$ is the variety of all flags $0\subset V_1\subset\cdots\subset V_n=V$ of subspaces $V_i\subseteq V$ satisfying $\dim V_i=i$ and $X_\mu(V_i)\subseteq V_{i-1}$. The cohomology ring of $\mathcal F_\mu$ is isomorphic to the ring $\CC[\bx]/I_\mu$ for certain homogeneous ideal $I_\mu$, and carries an $\SS_n$-action that can be obtained from the $\SS_n$-action on $\CC[\bx]$. In particular, if $\mu=1^n$ then $\mathcal F_\mu$ is the flag variety $G/B$ and $\CC[\bx]/I_\mu$ is the coinvariant algebra of $\SS_n$. 

\begin{theorem}[Hotta-Springer~\cite{HottaSpringer}, Garsia-Procesi~\cite{GarsiaProcesi}]\label{thm:Springer}
The graded Frobenius characteristic of $\CC[\bx]/I_\mu$ is the modified Hall-Littlewood symmetric function 
\[
\widetilde H_\mu(x;t)= \sum_\lambda t^{n(\mu)} K_{\lambda\mu}(t^{-1})s_\lambda
\]
where $K_{\lambda\mu}(t)$ is the Kostka-Foulkes polynomial.
\end{theorem}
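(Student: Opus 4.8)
The plan is to give the algebraic proof due to Garsia and Procesi, which bypasses the geometry of $\mathcal F_\mu$ and runs by induction on $n$. Write $R_\mu=\CC[\bx]/I_\mu$ and work throughout with the explicit Tanisaki presentation of $I_\mu$: it is generated by the partial elementary symmetric polynomials $e_r(x_{i_1},\dots,x_{i_k})$, one for each $k$-subset of the variables and each $r$ in an interval of degrees determined by $\mu$ and $k$; in particular $e_1,\dots,e_n\in I_\mu$, so $R_\mu$ is a quotient of the coinvariant algebra of $\SS_n$. As preliminary bookkeeping I would record three facts, all provable from an explicit monomial basis of $R_\mu$: that $\mathrm{Hilb}(R_\mu,t)=\qbin{n}{\mu}{t}$; that $R_\mu^{\SS_n}=\CC$, concentrated in degree $0$; and that as an ungraded $\SS_n$-module $R_\mu$ is the permutation module induced from the trivial representation of the Young subgroup $\SS_\mu$, so that $\mathrm{ch}_t(R_\mu)$ is a $t$-combination of the $s_\lambda$ with $\lambda\trianglerighteq\mu$ whose $s_{(n)}$-coefficient is $1$. (This already makes the dimension count behind Corollary~\ref{cor:coinv} the case $\mu=1^n$ of the theorem.) The case $n\le 1$ is trivial.

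The heart of the argument is a branching recursion for $R_\mu$ regarded as a graded module over the parabolic subgroup $\SS_{n-1}\subset\SS_n$ fixing $x_n$. Filtering $R_\mu$ by the powers of $x_n$ and passing to the associated graded, one shows that as graded $\SS_{n-1}$-modules
\[
\mathrm{gr}\,R_\mu \;\cong\; \bigoplus_{i}\, t^{\,c_i}\, R_{\mu^{[i]}},
\]
where $i$ runs over the nonempty rows of $\mu$, $\mu^{[i]}$ is the partition of $n-1$ obtained by deleting a box from row $i$ (re-sorted if needed), each $R_{\mu^{[i]}}$ is viewed as a quotient of $\CC[x_1,\dots,x_{n-1}]$, and the $c_i$ are explicit degree shifts; for $\mu=1^n$ these shifts run over $0,1,\dots,n-1$, so the right-hand side is $[n]_t\cdot R_{1^{n-1}}$, recovering the classical freeness of the coinvariant algebra over $R_{1^{n-1}}$. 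What must be checked is that, after reduction modulo lower powers of $x_n$, the Tanisaki generators of $I_\mu$ degenerate precisely to the Tanisaki generators of $I_{\mu^{[i]}}$ in the remaining variables, together with the exact bookkeeping of the shifts $c_i$. I expect this compatibility of the Tanisaki presentations under box removal to be the main obstacle; everything else is comparatively formal.

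Finally one checks that $\widetilde H_\mu(x;t)$ satisfies the matching recursion, which then closes the induction. Applying $s_1^{\perp}$ — the Frobenius-characteristic shadow of restriction to $\SS_{n-1}$ — to $\widetilde H_\mu=\sum_\lambda \widetilde K_{\lambda\mu}(t)\,s_\lambda$ with $\widetilde K_{\lambda\mu}(t)=t^{n(\mu)}K_{\lambda\mu}(t^{-1})$, the standard recurrence for Kostka--Foulkes polynomials yields $s_1^{\perp}\widetilde H_\mu=\sum_i t^{\,c_i}\widetilde H_{\mu^{[i]}}$, matching the module side under the induction hypothesis. Combining this restriction recursion with a second, compatible recursion within partitions of $n$ (the Lascoux--Sch\"utzenberger recurrence in the $\mu$-argument of $K_{\lambda\mu}$, matched on the module side by a relation between $R_\mu$ and the $R_{\mu^*}$ attached to box-moves $\mu\to\mu^*$), together with the triangularity and normalizations noted above and the known base case $\mu=1^n$ (Corollary~\ref{cor:coinv}, equivalently Lusztig--Stanley), pins down $\mathrm{ch}_t(R_\mu)=\widetilde H_\mu(x;t)$. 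A purely geometric proof is also available: a $\CC^{\times}$-action gives a Bialynicki--Birula cell decomposition of $\mathcal F_\mu$ indexed by $\SYT(\mu)$, and combining its Hilbert series with Springer's Weyl-group action on $H^{\ast}(\mathcal F_\mu)$ identifies the graded character with a Green polynomial and hence with $\widetilde H_\mu$ — but that route is heavier and further from the combinatorial methods of this paper.
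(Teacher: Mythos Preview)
The paper does not prove this theorem at all: it is stated as a background result, attributed to Hotta--Springer and Garsia--Procesi, and immediately used to motivate the $0$-Hecke analogue that follows. There is therefore no proof in the paper to compare your attempt against.

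That said, your outline is a faithful sketch of the Garsia--Procesi argument from~\cite{GarsiaProcesi}: the filtration of $R_\mu$ by powers of $x_n$, the identification of the associated graded pieces with the $R_{\mu^{[i]}}$ (with the appropriate degree shifts), and the matching branching recursion for $\widetilde H_\mu$ via $s_1^\perp$ and the Kostka--Foulkes recurrence. You are right that the technical core is verifying that the Tanisaki generators of $I_\mu$ degenerate to those of $I_{\mu^{[i]}}$ under this filtration; Garsia--Procesi handle this by constructing an explicit monomial basis $\mathcal B(\mu)$ recursively and checking the Hilbert series match. One minor correction: the base case of the induction is really $n=0$ or $n=1$, not $\mu=1^n$; Corollary~\ref{cor:coinv} (the Lusztig--Stanley formula) is the special case $\mu=1^n$ of the conclusion, not an input to the induction.
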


To find an analogous result for the $0$-Hecke algebras, let $R_\mu:=\FF[\bx]/I_\mu$ where $\FF$ is an arbitrary field, and we consider the question of when the $H_n(0)$-action on $\FF[\bx]$ preserves the ideal $I_\mu$. Recall the following construction of $I_\mu$ by Tanisaki~\cite{Tanisaki}. Let the conjugate of a partition $\mu$ of $n$ be $\mu'=(0\leq\mu'_1\leq\cdots\leq\mu'_n)$. Note that the \emph{height} of the Ferrers diagram of $\mu$ is $h=h(\mu):=\mu'_n$. Let
\[
d_k(\mu)=\mu'_1+\cdots+\mu'_k,\quad k=1,\ldots,n.
\]
Then the ideal $I_\mu$ is generated by
\begin{equation}\label{I_mu}
\{e_r(S): k\geq r> k-d_k(\mu),\ |S|=k,\ S\subseteq\{x_1,\ldots,x_n\}\}
\end{equation}
where $e_r(S)$ is the $r$-th elementary symmetric function in the set $S$ of variables. See also Garsia and Procesi~\cite{GarsiaProcesi}.

\begin{proposition}
The Demazure operators preserve the ideal $I_\mu$ if and only if $\mu$ is a hook.
\end{proposition}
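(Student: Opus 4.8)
The plan is to reduce the proposition to a computation with the Tanisaki generators (\ref{I_mu}), using two features shared by all of the ideals $I_\mu$: each $I_\mu$ is $\SS_n$-stable, and each contains the coinvariant ideal $(\FF[\bx]^{\SS_n}_+)$ (take $|S|=n$ in (\ref{I_mu}), which yields $e_1,\dots,e_n$). Since $\pi_i=\overline\pi_i+1$, the Demazure operators preserve $I_\mu$ if and only if the operators $\overline\pi_i=\pi_i-1$ do, and $(\FF[\bx]^{\SS_n}_+)$ is already $\overline\pi_i$-stable because the $\FF[\bx]^{\SS_n}$-linearity of $\pi_i$ recorded in Section~\ref{SCoinvariantsA} gives $\overline\pi_i(hp)=h\,\overline\pi_i(p)$ for $h\in\FF[\bx]^{\SS_n}_+$. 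So only the effect of $\overline\pi_i$ on the remaining generators $e_r(S)$ with $|S|<n$ is at issue. The one fact about $\overline\pi_i$ used throughout, read off directly from (\ref{pi}): writing a monomial as $m=m_0\,x_i^{a}x_{i+1}^{b}$ with $m_0$ free of $x_i,x_{i+1}$, every monomial appearing in $\overline\pi_i(m)$ involves at least as many distinct variables as $m$; in particular $\overline\pi_i$ annihilates every polynomial free of $x_i,x_{i+1}$, and $\overline\pi_i(x_i)=x_{i+1}$.

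For the ``if'' direction, let $\mu$ be a hook with $h=h(\mu)$ rows; then $\mu'$ is again a hook, so $\mu'_j\le 1$ for every $j<n$. Reading $d_k(\mu)$ off from this, the Tanisaki generators with $|S|=k<n$ are exactly the $e_r(S)$ with $h\le r\le k$; each is a sum of squarefree monomials of degree $r\ge h$, so it lies in the ideal $J_h$ generated by all squarefree monomials of degree $h$, while conversely $J_h$ is generated by the $e_h(S)$ with $|S|=h$. Hence $I_\mu=(\FF[\bx]^{\SS_n}_+)+J_h$. Now $J_h$ is the $\FF$-span of the monomials whose support has size $\ge h$, and by the fact above $\overline\pi_i$ maps each such monomial to a $\ZZ$-combination of monomials with the same property, so $\overline\pi_i(J_h)\subseteq J_h$. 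Together with the $\overline\pi_i$-stability of $(\FF[\bx]^{\SS_n}_+)$, this shows $I_\mu$ is $\overline\pi_i$-stable for all $i$, i.e.\ the Demazure operators preserve $I_\mu$.

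For the ``only if'' direction, suppose $\mu$ is not a hook. Then $2\le h\le n-2$ and $\mu'_{n-1}\ge 2$ --- if $h\le 1$ or $h\ge n-1$ or $\mu'_{n-1}\le 1$ then $\mu$ would be a hook. I claim $\overline\pi_{n-1}$ does not preserve $I_\mu$. The element $g=e_h(x_1,\dots,x_{n-1})$ is a Tanisaki generator, as $|S|=n-1\ge h$ and $(n-1)-d_{n-1}(\mu)=h-1<h$. Expanding $g=e_h(x_1,\dots,x_{n-2})+x_{n-1}\,e_{h-1}(x_1,\dots,x_{n-2})$ and applying $\overline\pi_{n-1}$ --- which annihilates the first summand, pulls the factor $e_{h-1}(x_1,\dots,x_{n-2})$ (free of $x_{n-1},x_n$) out of the second, and sends $x_{n-1}\mapsto x_n$ --- gives $\overline\pi_{n-1}(g)=x_n\,e_{h-1}(x_1,\dots,x_{n-2})$. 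Since $e_h(x_1,\dots,x_{n-2},x_n)$ is likewise a Tanisaki generator and equals $e_h(x_1,\dots,x_{n-2})+x_n\,e_{h-1}(x_1,\dots,x_{n-2})$, we get that $\overline\pi_{n-1}(g)\in I_\mu$ would force $e_h(x_1,\dots,x_{n-2})\in I_\mu$. Thus the whole direction comes down to the assertion
\[
\mu\text{ not a hook}\ \Longrightarrow\ e_h(x_1,\dots,x_{n-2})\notin I_\mu .
\]
I would establish this by exhibiting a degree-$h$ element of the $\mu$-harmonic space $\mathcal H_\mu=I_\mu^{\perp}$ (in the apolarity pairing) that pairs nontrivially with $e_h(x_1,\dots,x_{n-2})$, using the Garsia--Procesi description of $\mathcal H_\mu$ as the closure under $\SS_n$ and divided differences of a product of column Vandermonde determinants --- for $\mu=(2,2)$ one may take $(x_1-x_3)(x_2-x_4)$ --- or, equivalently, by showing via the Garsia--Procesi monomial basis of $R_\mu=\FF[\bx]/I_\mu$ that the image of $e_h(x_1,\dots,x_{n-2})$ there is nonzero. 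This last step is the main obstacle: the reduction to it is formal, but producing the dual harmonic (or surviving basis monomial) uniformly over all non-hook shapes, rather than just for $2\times2$, requires genuine work and carries the content of the ``only if'' direction.
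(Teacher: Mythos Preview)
Your ``if'' direction is correct and is essentially the paper's argument: for a hook $\mu$ of height $h$ one has $I_\mu=(\FF[\bx]^{\SS_n}_+)+J_h$ with $J_h$ the ideal generated by squarefree degree-$h$ monomials, and both summands are $\overline\pi_i$-stable by the support observation you extract from (\ref{pi}).

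The ``only if'' direction has a genuine gap, which you yourself name. After correctly deriving that $\overline\pi_{n-1}$-stability would force $e_h(x_1,\dots,x_{n-2})\in I_\mu$, you still owe a proof that $e_h(x_1,\dots,x_{n-2})\notin I_\mu$ for \emph{every} non-hook $\mu$, and you only gesture at it. Two cautions about your proposed route: the apolarity/harmonic-space description $\mathcal H_\mu=I_\mu^{\perp}$ is not available over an arbitrary field $\FF$ (the pairing degenerates in positive characteristic), and even over $\CC$ a uniform construction of the dual harmonic across all non-hook shapes is not obvious.

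The paper avoids this difficulty by continuing to exploit the assumed $\overline\pi_i$-stability rather than stopping at the first consequence. The same trick with $\overline\pi_{n-2},\overline\pi_{n-3},\dots$ iterates to give $e_h(x_1,\dots,x_h)=x_1\cdots x_h\in I_\mu$, and then further Demazure operators put every squarefree degree-$h$ monomial into $I_\mu$. Passing to the coinvariant algebra $\FF[\bx]/(e_1,\dots,e_n)$ and comparing degree-$h$ pieces, these monomials (restricted to variables $x_1,\dots,x_{n-1}$, where they are Artin-basis elements and hence independent) must lie in the span of the $n$ images of $e_h(S)$, $|S|=n-1$; thus $\binom{n-1}{h}\le n$ with $2\le h\le n-2$, forcing $n=4$, $h=2$, and one checks $x_1x_2\notin I_{(2,2)}$ by hand. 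This dimension count is elementary, characteristic-free, and replaces the uniform harmonic construction you were missing.
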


\begin{proof}
First assume $\mu=(0^{n-h},1^{h-1},n-h+1)$ is a hook. Then $\mu'=(0^{h-1},1^{n-h},h)$ and so
\[
(1-d_1(\mu),2-d_2(\mu),\ldots,n-d_n(\mu))=(1,2,\ldots,h-1,h-1,\ldots,h-1,0).
\]
It follows that the ideal $I_\mu$ is generated by the elementary symmetric functions $e_1,\ldots,e_n$, together with the following partial elementary symmetric functions
\[
\{e_r(S):r=h,\ldots,k,\ S\subseteq\{x_1,\ldots,x_n\},\ |S|=k,\ k=h,\ldots,n-1\}.
\]
This implies that $I_\mu$ can be generated by $e_1,\ldots,e_n$ and all monomials in
\[
\mathcal M_h=\{x_{i_1}\cdots x_{i_h}:0\leq i_1<\cdots<i_h\leq n\}.
\]
By (\ref{pi}), if $x_{i_1}\cdots x_{i_h}$ is in $\mathcal M_h$ and $f$ is an arbitrary monomial, then $\overline\pi_i(x_{i_1}\cdots x_{i_h}f)$ is divisible by some element in $\mathcal M_h$. Thus the ideal $I_\mu$ is preserved by the Demazure operators.

Now assume $\mu$ is not a hook. Then $\mu'_{n-1}\geq 2$ and thus
\begin{eqnarray*}
k-d_k(\mu) &=& k-n+n-d_k(\mu)\\
& =& k-n+\mu'_n+\mu'_{n-1}+\cdots+\mu'_{k+1} \\
& \geq & k-n+\mu'_n+2+1+\cdots+1 \\
& = & \mu'_n=h
\end{eqnarray*}
for $k=n-2,\ldots,n-\mu_1+1$.
One also sees that
\[
k-d_k(\mu)=\left\{
\begin{array}{cc}\
0, & k=n, \\
h-1, & k=n-1,\\
k, & n-\mu_1\geq k\geq 1.
\end{array}\right.
\]
Thus the only elements in the generating set (\ref{I_mu}) that have degree no more than $h$ are $e_1,\ldots,e_h$ and those $e_h(S)$ with $|S|=n-1$.

Suppose to the contrary that $I_\mu$ is preserved by Demazure operators. Since 
\[
e_h(x_1,\ldots,x_{n-1})=x_{n-1}e_{h-1}(x_1,\ldots,x_{n-2})+e_h(x_1,\ldots,x_{n-2})\in I_\mu
\]
we have
\[
\overline \pi_{n-1} e_h(x_1,\ldots,x_{n-1})=x_n e_{h-1}(x_1,\ldots,x_{n-2})\in I_\mu
\]
and thus
\[
e_h(x_1,\ldots,x_{n-2},x_n)-\overline \pi_{n-1} e_h(x_1,\ldots,x_{n-1})=e_h(x_1,\ldots,x_{n-2})\in I_\mu.
\]
Repeating this process one obtains $e_h(x_1,\ldots,x_h)=x_1\cdots x_h \in I_\mu$. Then applying the Demazure operators to $x_1\cdots x_h$ gives $x_{i_1}\cdots x_{i_h}\in I_\mu$ whenever $1\leq i_1<\cdots<i_h\leq n$. Considering the degree we have 
\[
x_{i_1}\cdots x_{i_h}=\sum_{i=1}^h f_i e_i + \sum_{|S|=n-1} c_S e_h(S)
\]
where $f_i\in\FF[\bx]$ is homogeneous of degree $h-i$ and $c_S\in\FF$. It follows that $U\subseteq U'$, where $U$ and $U'$ are the $\FF$-subspaces of the coinvariant algebra $\FF[\bx]/(e_1,\ldots,e_n)$ spanned by 
\[
\{x_{i_1}\cdots x_{i_h}:1\leq i_1<\cdots<i_h\leq n-1\} \quad {\rm and} \quad \{e_h(S):|S|=n-1\}.
\]
It is well-known that all divisors of $x_1^{n-1}x_2^{n-2}\cdots x_{n-1}$ form a basis for $\FF[\bx]/(e_1,\ldots,e_n)$, i.e. the \emph{Artin basis}. Thus
\[
n = {n\choose n-1} \geq \dim U' \geq \dim U = {n-1\choose h}\geq{n-1\choose 2} \]
where the last inequality follows from $2\leq h\leq n-2$ since $\mu$ is a hook. Therefore we must have $n=4$ and $h=2$. In that case it is easy to check $x_1x_2\notin I_{(2,2)}$. Hence we are done.
\end{proof}

\begin{theorem}\label{thm:Rmu}
Assume $\mu=(0^{n-h},1^{h-1},n-h+1)$ is a hook and view it as a composition by removing all zeros. Then the $H_n(0)$-module $R_\mu$ is a direct sum of the projective indecomposable $H_n(0)$-modules $\P_\alpha$ for all compositions $\alpha\cleq\mu$, i.e.
\[
R_\mu \cong \bigoplus_{\alpha\cleq \mu} \P_\alpha.
\]
%Consequently,
%$$
%\mathcal F_{q,t}(R_\mu)=
%\sum_{w\in\mathfrak S_n:D(w)\subseteq D(\mu)}
%q^{{\rm inv}(w)}t^{{\rm maj}(w)}F_{D(w^{-1})}.
%$$
\end{theorem}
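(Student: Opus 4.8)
The plan is to decompose $R_\mu = \FF[\bx]/I_\mu$ as an $H_n(0)$-module using essentially the same strategy as in Theorem~\ref{CoinvariantsA}: exhibit an explicit $\FF$-basis of $R_\mu$ indexed by $\{w \in \SS_n : D(w) \cleq \mu\}$ (more precisely by pairs consisting of a composition $\alpha \cleq \mu$ and a permutation $w$ in the interval $[w_0(\alpha), w_1(\alpha)]$), consisting of Demazure atoms $\overline\pi_w x_{D(\alpha)}$, and then show the obvious map sending $\overline\pi_w x_{D(\alpha)} \mapsto T_w T'_{w_0(\alpha^c)}$ realizes each cyclic summand $H_n(0)\cdot \overline\pi_{w_0(\alpha)} x_{D(\alpha)}$ as $\P_\alpha$.

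First I would identify, from the Tanisaki presentation worked out in the previous proposition, which descent monomials $w x_{D(w)}$ survive modulo $I_\mu$ when $\mu = (0^{n-h},1^{h-1},n-h+1)$ is a hook. Since $I_\mu$ is generated by $e_1,\dots,e_n$ together with all squarefree monomials of degree $h$ in the $x_i$, the monomial $w x_{D(w)} = \prod_{i \in D(w)} x_{w(1)}\cdots x_{w(i)}$ lies in $I_\mu$ precisely when it is divisible by a squarefree degree-$h$ monomial, which happens exactly when $D(w)$ contains an index $\ge h$; equivalently $w x_{D(w)} \notin I_\mu$ iff $D(w) \subseteq \{1,\dots,h-1\}$, i.e. $D(w) = D(\alpha)$ for some composition $\alpha \cleq \mu$. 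Garsia--Procesi give an explicit monomial basis for $R_\mu$; I would match my surviving descent monomials against it (or against the Kostka numbers $K_{\lambda\mu}$ via $\sum_{\alpha \cleq \mu} r_\alpha = \dim R_\mu$, using the ribbon-number identities of Section~\ref{SRibbon}) to conclude that $\{w x_{D(w)} : D(w) \cleq \mu\}$ descends to a basis of $R_\mu$. Then, exactly as in Lemma~\ref{DemazureAtoms} and Lemma~\ref{lem:basis}, the triangularity $\overline\pi_w x_{D(w)} = w x_{D(w)} + (\text{strictly lower in }\prec)$ together with the straightening Lemma~\ref{Straightening} upgrades this to the statement that $\{\overline\pi_w x_{D(w)} : D(w) \cleq \mu\}$ is also a basis of $R_\mu$ --- one needs that the straightening terms $m_\mu \cdot (\text{lower monomials})$ added when building the full polynomial-ring basis stay inside $I_\mu$, which is automatic since $I_\mu \supseteq (\FF[\bx]^{\SS_n}_+)$ and those terms lie in the ideal generated by the positive-degree symmetric functions.

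Next, for each composition $\alpha \cleq \mu$, I would show $H_n(0)\cdot \overline\pi_{w_0(\alpha)} x_{D(\alpha)}$ has basis $\{\overline\pi_w x_{D(\alpha)} : w \in [w_0(\alpha), w_1(\alpha)]\}$ and is isomorphic to $\P_\alpha$: the relations of the $\overline\pi_i$ give $\overline\pi_u \overline\pi_{w_0(\alpha)} = \pm \overline\pi_w$ for some $w \ge w_0(\alpha)$ in left weak order, so $D(w) \supseteq D(\alpha)$; if $D(w)$ strictly contains $D(\alpha)$ then picking $j \in D(w)\setminus D(\alpha)$ and writing $\overline\pi_w = \overline\pi_{ws_j}\overline\pi_j$, we get $\overline\pi_j x_{D(\alpha)} = 0$ by (\ref{pi}) since $x_{D(\alpha)}$ has equal exponents $a=b$ in $x_j, x_{j+1}$ when $j \notin D(\alpha)$; hence the span is $\{\overline\pi_w x_{D(\alpha)} : D(w) = D(\alpha)\}$, a subset of the linearly independent basis above, so it is itself a basis, and $\overline\pi_w x_{D(\alpha)} \mapsto T_w T'_{w_0(\alpha^c)}$ is the desired isomorphism onto $\P_\alpha$ (comparing with Norton's basis for $\P_\alpha$). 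Summing over $\alpha \cleq \mu$ gives the direct sum decomposition, the internal direct sum being justified because the union of the bases of the summands is the full basis of $R_\mu$.

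The main obstacle I anticipate is the first step: cleanly establishing that the surviving descent monomials form a basis of $R_\mu$. The dimension/Kostka-number bookkeeping needs $\#\{w : D(w) \subseteq \{1,\dots,h-1\}\} = \sum_{\alpha \cleq \mu} r_\alpha = \dim_\CC R_\mu = \sum_\lambda K_{\lambda\mu} f^\lambda$, which one must verify matches the hook shape $\mu$; and linear independence modulo $I_\mu$ (as opposed to modulo the smaller ideal $(\FF[\bx]^{\SS_n}_+)$) needs a genuine argument --- either by citing the Garsia--Procesi monomial basis and a straightening/triangularity comparison, or by a direct spanning argument that every monomial reduces modulo $I_\mu$ to a combination of surviving descent monomials (which follows from Allen's reduction (\ref{eq:Allen}) once one checks that any monomial divisible by a squarefree degree-$h$ monomial, or any $m_\nu$-multiple thereof, lies in $I_\mu$). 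Everything after that is a routine transcription of the arguments already given for the $\mu = 1^n$ case in Section~\ref{SCoinvariantsA}.
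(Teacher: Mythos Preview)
Your proposal follows the same overall strategy as the paper: show that $\{\overline\pi_w x_{D(w)}: D(w)\subseteq[h-1]\}$ is an $\FF$-basis of $R_\mu$, then transcribe the module-theoretic argument of Theorem~\ref{CoinvariantsA} verbatim. The second step is exactly right and matches the paper.

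For the basis step, however, the paper is more direct and self-contained than what you outline. For \emph{spanning}, the paper observes (using (\ref{pi})) that when $D(w)\not\subseteq[h-1]$, \emph{every} monomial occurring in $\overline\pi_w x_{D(w)}$ --- not just the leading descent monomial $wx_{D(w)}$ --- involves at least $h$ distinct variables, hence lies in the monomial ideal $(\mathcal M_h)\subseteq I_\mu$. You verify this only for the single descent monomial $wx_{D(w)}$, which is not enough to make your ``upgrade via triangularity'' go through: the lower terms in $\overline\pi_w x_{D(w)}-wx_{D(w)}$ are arbitrary monomials, and Lemma~\ref{lem:basis} together with Lemma~\ref{Straightening} only yield a basis of the \emph{coinvariant algebra}, not of the further quotient $R_\mu$. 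Your parenthetical about $m_\nu\cdot(\text{lower monomials})$ lying in $(\FF[\bx]^{\SS_n}_+)\subseteq I_\mu$ does not bridge this gap. The clean fix is precisely the observation above about all monomials of the non-surviving atoms; once you have it, spanning plus your dimension count finishes the argument.

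For \emph{independence}, the paper avoids any appeal to Garsia--Procesi or to an external dimension formula. It introduces the linear truncation $f\mapsto[f]_h$ that deletes all monomials divisible by an element of $\mathcal M_h$, notes that for $D(w)\subseteq[h-1]$ the leading term $wx_{D(w)}$ survives the truncation (it has at most $h-1$ distinct variables), and then invokes Lemma~\ref{lem:basis} to conclude that the truncated atoms $[\overline\pi_w x_{D(w)}]_h$ are linearly independent already in the coinvariant algebra. Since $\overline\pi_w x_{D(w)}\equiv[\overline\pi_w x_{D(w)}]_h$ modulo $(\mathcal M_h)\subseteq I_\mu$, this yields independence in $R_\mu$ directly. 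Your proposed routes (matching the Garsia--Procesi monomial basis, or verifying $\dim R_\mu=n!/(n-h+1)!$) would also work but import results from outside the paper.
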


\begin{proof}
By the proof of the previous proposition, $I_\mu$ is generated by $e_1,\ldots,e_n$ and 
\[
\mathcal M_h=\{x_{i_1}\cdots x_{i_h}:1\leq i_1<\cdots<i_h\leq n\}.
\]
Thus $R_\mu$ is the quotient of $\FF[\bx]/(\FF[\bx]^{\SS_n}_+)$ by its ideal generated by $\mathcal M_h$. By Theorem~\ref{CoinvariantsA}, it suffices to show that $\FF[\bx]/I_\mu$ has a basis given by
\begin{equation}\label{eq:mu-basis}
\{\overline \pi_w x_{D(w)}: w\in\mathfrak S_n, D(w)\subseteq D(\mu)\}.
\end{equation}

If $D(w)\not\subseteq D(\mu)=\{1,2,\ldots,h-1\}$, i.e. $w$ has a descent $i\geq h$, then $x_{D(w)}$ contains at least $h$ distinct variables, and so do all monomials in $\overline\pi_w x_{D(w)}$ by (\ref{pi}). Thus $\FF[\bx]/I_\mu$ is spanned by (\ref{eq:mu-basis}).

To show (\ref{eq:mu-basis}) is linearly independent, assume
\[
\sum_{D(w)\subseteq D(\mu)} c_w\overline\pi_w x_{D(w)}\in I_\mu,\quad c_w\in\FF.
\]
For any polynomial $f\in\FF[\bx]$, let $[f]_h$ be the polynomial obtained from $f$ by removing all terms divisible by some element in $\mathcal M_h$. It follows that
\[
\sum_{D(w)\subseteq D(\mu)} c_w[\overline\pi_w x_{D(w)}]_h
\in (e_1,\ldots,e_{h-1}).
\]
If $D(w)\subseteq D(\mu)=[h-1]$ then the leading term of $[\overline\pi_w x_{D(w)}]_h$ under ``$\prec$'' is still the descent monomial $wx_{D(w)}$. By Lemma~\ref{lem:basis},
\[
\left\{[\overline\pi_w x_{D(w)}]_h: w\in\mathfrak S_n, D(w)\subseteq D(\mu)\right\}
\]
gives a linearly independent set in $\FF[\bx]/(\FF[\bx]^{\SS_n}_+)$. This forces $c_w=0$ whenever $D(w)\subseteq D(\mu)$.
\end{proof}

By work of Bergeron and Zabrocki~\cite{BZ}, the modified Hall-Littlewood functions $\widetilde H_\mu(x;t)$ have the following noncommutative analogue lying in $\NSym[t]$ for all compositions $\alpha$:
\[
\widetilde{\mathbf{H}}_\alpha(\mathbf x;t):=\sum_{\beta\cleq\alpha} t^{\maj(\beta)} \bs_\beta.
\] 

\begin{corollary}
Assume $\mu$ is a hook. Then 
\[
\mathbf{ch}_t(R_\mu)=\sum_{\alpha\cleq\mu} t^{{\rm maj}(\alpha)}\bs_\alpha=\widetilde{\mathbf{H}}_\mu(\bx;t),
\]
\[
\mathrm{Ch}_t(R_\mu)=\sum_{\alpha\cleq\mu}
t^{{\rm maj}(\alpha)} s_\alpha = \widetilde H_\mu(x;t).
\]
\end{corollary}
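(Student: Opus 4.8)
The plan is to derive this corollary directly from Theorem~\ref{thm:Rmu}, which already identifies the $H_n(0)$-module structure of $R_\mu$ completely. Since Theorem~\ref{thm:Rmu} gives $R_\mu \cong \bigoplus_{\alpha\cleq\mu}\P_\alpha$ as $H_n(0)$-modules, the only remaining point is to track the grading and apply the definitions of $\mathbf{ch}_t$ and $\mathrm{Ch}_t$ from Section~\ref{SRepH}, together with the definition of $\widetilde{\mathbf H}_\mu(\bx;t)$ from Bergeron--Zabrocki.

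First I would pin down the grading. By the proof of Theorem~\ref{thm:Rmu}, an $\FF$-basis of $R_\mu$ is $\{\overline\pi_w x_{D(w)} : w\in\SS_n,\ D(w)\subseteq D(\mu)\}$, and the summand of $R_\mu$ isomorphic to $\P_\alpha$ (for $\alpha\cleq\mu$) is spanned by $\{\overline\pi_w x_{D(\alpha)} : w\in[w_0(\alpha),w_1(\alpha)]\}$. Exactly as in the proof of Corollary~\ref{cor:coinv}, every element of this summand is homogeneous of degree $\maj(w)$ for $w$ in the relevant interval; in particular $\overline\pi_{w_0(\alpha)}x_{D(\alpha)}$ has degree equal to $\deg x_{D(\alpha)} = \sum_{i\in D(\alpha)} i = \maj(\alpha)$. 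Hence in the grading on $R_\mu$, the projective summand $\P_\alpha$ sits in degree $\maj(\alpha)$, so $\deg\P_\alpha = \maj(\alpha)$.

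Next, plugging this into the definition of the graded noncommutative characteristic, $\mathbf{ch}_t(R_\mu) = \sum_{\alpha\cleq\mu} t^{\maj(\alpha)}\bs_\alpha$, which is by definition $\widetilde{\mathbf H}_\mu(\bx;t)$. For the quasisymmetric statement, I would use that $\mathrm{Ch}(\P_\alpha) = s_\alpha$ (Krob--Thibon, recalled in Section~\ref{SRepH}), so $\mathrm{Ch}_t(R_\mu) = \sum_{\alpha\cleq\mu} t^{\maj(\alpha)} s_\alpha$. It remains to recognize this as $\widetilde H_\mu(x;t)$: since $\mathrm{Ch}_t$ is the commutative image of $\mathbf{ch}_t$ (again Krob--Thibon), $\mathrm{Ch}_t(R_\mu)$ is the image of $\widetilde{\mathbf H}_\mu(\bx;t)$ under $\NSym\twoheadrightarrow\Sym$, and Bergeron--Zabrocki~\cite{BZ} established that this commutative image is precisely $\widetilde H_\mu(x;t)$; alternatively, when $\FF=\CC$ one may invoke Theorem~\ref{thm:Springer} to identify the graded Frobenius characteristic of $R_\mu$ directly.

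I do not expect any serious obstacle here: the corollary is essentially bookkeeping on top of Theorem~\ref{thm:Rmu}. The one point requiring care is the grading claim --- verifying that each projective summand is genuinely concentrated in a single degree, rather than merely generated in that degree --- but this follows verbatim from the argument in Corollary~\ref{cor:coinv}, since the $H_n(0)$-action via the $\overline\pi_i$ preserves polynomial degree and the basis $\{\overline\pi_w x_{D(\alpha)}\}$ of the $\P_\alpha$-summand consists of homogeneous elements all of degree $\maj(w)=\maj(\alpha)$ (the descent set, hence the major index, is constant on the interval $[w_0(\alpha),w_1(\alpha)]$ because $D(w)=D(\alpha)$ there).
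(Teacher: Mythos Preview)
Your argument follows the paper's own proof almost verbatim: deduce the graded noncommutative characteristic from Theorem~\ref{thm:Rmu} together with the observation that the summand $\P_\alpha$ sits in degree $\maj(\alpha)$, then pass to the commutative image to obtain $\mathrm{Ch}_t(R_\mu)=\sum_{\alpha\cleq\mu}t^{\maj(\alpha)}s_\alpha$. The only divergence is in the final identification $\sum_{\alpha\cleq\mu}t^{\maj(\alpha)}s_\alpha=\widetilde H_\mu(x;t)$: the paper does not cite Bergeron--Zabrocki for this but instead verifies it directly from Haglund's combinatorial formula for the modified Macdonald polynomial $\widetilde H_\mu(x;q,t)$ specialized at $q\to 0$.

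One caution on your ``alternatively'' clause: Theorem~\ref{thm:Springer} computes the graded \emph{Frobenius} characteristic $\mathrm{ch}_t$ of the $\SS_n$-module $R_\mu$, not the graded quasisymmetric characteristic $\mathrm{Ch}_t$ of the $H_n(0)$-module $R_\mu$. These are defined via different actions and different Grothendieck groups, so knowing $\mathrm{ch}_t(R_\mu)=\widetilde H_\mu(x;t)$ does not by itself identify the symmetric function $\sum_{\alpha\cleq\mu}t^{\maj(\alpha)}s_\alpha$ with $\widetilde H_\mu(x;t)$; you would still need an independent reason why the two characteristics coincide here. Your primary route through~\cite{BZ} (or the paper's route through Haglund's formula) is what actually closes the argument.
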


\begin{proof}
Theorem~\ref{thm:Rmu} immediately implies the graded noncommutative characteristic of $R_\mu$, whose commutative image is the graded quasisymmetric characteristic of $R_\mu$. One can check that $\mathrm{Ch}_t(R_\mu) = \widetilde H_\mu(x;t)$ when $\mu$ is a hook, by using Haglund's combinatorial formula~\cite{HHL} for the modified Macdonald polynomials $\widetilde H_\mu(x;q,t)$ and taking $q\to0$.
\end{proof}

\begin{remark}\label{rem:Rmu}
(i) We recover the results in Section~\ref{SCoinvariantsA} by taking $\mu=1^n$.

\noindent(ii) Using certain difference operators, Hivert~\cite{Hivert} defined a noncommutative analogue of the Hall-Littlewood functions, which is in general different from the noncommutative analogue of Bergeron and Zabrocki. However, they are the same when $\mu$ is a hook! 

\noindent (iii) It is not clear to the author why the results are nice only in the hook case, except for a naive explanation: the hooks are the only diagrams that belong to both the family of the Ferrers diagrams of partitions and the family of ribbon diagrams of compositions.
\end{remark}

\section{Questions for future research}\label{SQ}

\subsection{Equidistribution of the inversion number and major index}
The equidistribution of inv and maj was first proved on permutations of multisets by P.A. MacMahon in the 1910s; applying an inclusion-exclusion would give their equidistribution on inverse descent classes of $\mathfrak S_n$. However, the first proof for the latter result appearing in the literature was by Foata and Sch\"utzenberger \cite{FS} in 1970, using a bijection constructed earlier by Foata \cite{Foata}. Is there an algebraic proof from the $(q,t)$-bigraded characteristic of $\FF[{\bx}]/(\FF[{\bx}]^{\mathfrak S_n}_+)$, which is given in Corollary~\ref{cor:coinv} (i) and involves inv, maj, and inverse descents?

\subsection{Decompositions of $1_B^G$ and $\FF[{\bx}]^B/(\FF[{\bx}]^G_+)$}
In \S~\ref{SBG} we studied an $H_W(0)$-action on the flag variety $1_B^G$ and found its simple composition factors, but we do \emph{not} know the decomposition of $1_B^G$ into indecomposable $H_W(0)$-modules.  Assume $G=GL(n,\FF_q)$ below. Computations show that $1_B^G$ is in general \emph{not} projective, %either case?
although its quasisymmetric characteristic is always symmetric.

%One can embed $H_W(0)$ into $1_B^G$ by sending $T_w$ to $\overline{BwB}$.
%Assume $G=GL(n,\FF_q)$ below. Recall that $H_3(0)=\P_3\oplus \P_{21}\oplus \P_{12}\oplus \P_{111}$.
%where $P_\alpha$ is the indecomposable projective $H_3(0)$-module indexed by the composition $\alpha$. 
%For $n=3$ we have a candidate for a $q$-analogous $H_3(0)$-module decomposition
%\[ 1_B^G\cong \P_{3}\oplus \left(\P_{21}\oplus \P_{12}\right)^{\oplus {q+1\choose 2}} \oplus\left( \P_{111}\right)^{\oplus q^3}
%\bigoplus_{0\leq j\leq i<q}(\P_{12}s_1u_1^is_2u_2^j\oplus \P_{21}s_2u_2^is_1u_1^j)
%\right)
%\oplus\left(\bigoplus_{0\leq i,j,k<q}\P_{111}s_1u_1^is_2u_2^js_1u_1^k\right) \]
%which has been checked to be correct for $q=2,3,5,7$, showing that $1_B^G$ is a projective $H_3(0)$-module in these cases. Is this true for all \emph{primes} $q$?
%where
%$$
%u_1=\begin{bmatrix} 1 & 1 \\ & 1 \\ & & 1 \end{bmatrix},\quad
%u_2=\begin{bmatrix} 1 &  \\ & 1 & 1 \\ & & 1 \end{bmatrix}.
%$$
%Note that $M_{12}$ and $M_{21}$ both appear $(q^2+q)/2$ times,
%and $M_{111}$ appears $q^3$ times.

%On the other hand, for $n=3$, $q=4,8$, and $n=4$, $q=2,3$, 

%Computations also show that $\FF[{\bx}]^B/(\FF[{\bx}]^G_+)$ is \emph{not} projective for $n=3$, $q=2,3$. 

The coinvariant algebra $\FF[{\bx}]^B/(\FF[{\bx}]^G_+)$ is \emph{not} a projective $H_n(0)$-module either, since its graded quasisymmetric characteristic is \emph{not} even symmetric (see the definition of the $(q,t)$-multinomial coefficients). To find its decomposition, it will be helpful to know more (nonprojective) indecomposable $H_n(0)$-modules (there are infinitely many, and some were studied by Duchamp, Hivert, and Thibon~\cite{NCSFVI}).%(by constructing the Auslander-Reiten quiver of $H_n(0)$, for example).
 
Another question is to find a $q$-analogue of the Demazure operators, which might give another $H_n(0)$-action on $\FF[{\bx}]^B/(\FF[{\bx}]^G_+)$.

\subsection{Coincidence of Frobenius type characteristics}
For $G=GL(n,\CC)$, the complex flag variety $1_B^G$ has its cohomology ring isomorphic to the coinvariant algebra of $\SS_n$, whose graded Frobenius characteristic and graded quasisymmetric characteristic both equal the modified Hall-Littlewood symmetric function $\widetilde H_{1^n}(x;t)$. For $G=GL(n,\FF_q)$, the flag variety $1_B^G$ itself, when defined over a field of characteristic $p\mid q$, is also an $H_n(0)$-module whose quasisymmetric characteristic equals $\widetilde H_{1^n}(x;q)$. Is there a better explanation for the coincidence of these Frobenius type characteristics?

%The $H_n(0)$-action on $1_B^G$ does not leave a Springer fiber $\mathcal F_\mu$ invariant, unless $\mu=1^n$, i.e. $\mathcal F_\mu=1_B^G$. To see this, let $\{e_1,\ldots,e_n\}$ be a basis for $V$, and assume $X$ is in Jordan form under this basis. Then $\mathcal F_\mu$ contains the standard flag $V_1\subset V_2\subset\cdots V_n$, where $V_i$ is spanned by $e_1,\ldots,e_i$. Since $T_iB=Bs_iB$, $T_1$ sends the standard flag to the sum of the flags $U_1(a)\subset U_2(a)\subset\cdots\subset U_n(a)$ for all $a\in\FF_q$, where $U_1(a)=\FF_q(ae_1+e_2)$, and $U_i(a)=V_i$ for all $i=2,\ldots,n$. If $\mu\ne1^n$ then $Xe_1=0$ and $Xe_2=e_1$. Thus $XU_1(a)=\FF_qe_1\not\subseteq U_1(a)$ for all $a\in\FF_q$. This shows that $T_1$ sends the standard flag outside of $\mathcal F_\mu$ unless $\mu=1^n$. What about the $\SS_n$-action on $1_B^G$?


\begin{thebibliography}{50}

\bibitem{Allen}
E.E. Allen, The descent monomials and a basis for the diagonally
symmetric polynomials, J. Algebraic Combin. 3 (1994), 5--16.

\bibitem{ASS}
I. Assem, D. Simson, and A. Skowro\'nski, Elements of the representation theory of associative
algebras, vol. 1: Techniques of representation theory, London Mathematical Society Student
Texts, vol. 65, Cambridge University Press, Cambridge, 2006.

\bibitem{Idempotent}
C. Berg, N. Bergeron, S. Bhargava, and F. Saliola, Primitive orthogonal idempotents for R-trivial monoids, J. Algebra 348 (2011) 446--461.

\bibitem{BZ}
N. Bergeron and M. Zabrocki, $q$ and $q, t$-analogs of non-commutative
symmetric functions, Discrete Math. 298 (2005) 79--103.

%\bibitem{Bertin}
%M.-J. Dumas (the later M.-J. Bertin), Sous-anneaux d'invariants d'anneaux de polynomes, C. R. Acad. Sci. Paris 260 (31 mai 1965), 5655--5658.

\bibitem{Bjorner}
A. Bj\"orner, Some combinatorial and algebraic properties of Coxeter complexes and Tits buildings, Adv. Math. 52 (1984) 173--212.

\bibitem{BjornerWachs}
A. Bj\"orner and M. Wachs, Generalized quotients in Coxeter groups,  Trans. Amer. Math. Soc. 308 (1988), 1--37. 

%\bibitem{CampbellHughes}
%H.E.A. Campbell and I.P. Hughes, The ring of upper triangular invariants as a module over the Dickson invariants, Math. Ann. 306 (1996) 429--443.

%\bibitem{Chow}
%Chak-On Chow, Noncommutative symmetric functions of type B, M.I.T. Ph.D. thesis, 2001.

%\bibitem{Carter}
%R.W. Carter, Representation theory of the 0-Hecke algebra, J.
%Algebra 104 (1986) 89--103.

\bibitem{DeConciniProcesi}
C. De Concini and C. Procesi, Symmetric functions, conjugacy classes and the
flag variety, Invent. Math. 64 (1981), 203--219.

\bibitem{Demazure}
M. Demazure, D\'esingularisation des vari\'et\'es de Schubert g\'en\'eralis\'es,
Ann. Sci. \'Ecole Norm. Sup. 7 (1974), 53--88.

\bibitem{Denton}
T. Denton. A combinatorial formula for orthogonal idempotents in the
0-Hecke algebra of the symmetric group, Electronic J. Combin. 18 (2011).

\bibitem{DigneMichel}
F. Digne and J. Michel, Representations of Finite Groups of Lie Type,
London Math. Soc. Student Texts 21, Cambridge University Press (1991).

\bibitem{NCSFVI}
G. Duchamp, F. Hivert, and  J.-Y. Thibon, Noncommutative symmetric functions VI: free quasi-symmetric functions and related algebras, Internat. J. Algebra Comput. 12 (2002), 671--717. 


\bibitem{DKLT}
G. Duchamp, D. Krob, B. Leclerc, and J.-Y. Thibon, Fonctions quasi-sym\'etriques, fonctions sym\'etriques non commutatives et alg\`ebres de Hecke \`a $q = 0$, C.R. Acad. Sci. Paris 322 (1996), 107--112.


%\bibitem{Ehrenborg}
%R. Ehrenborg, On posets and Hopf algebras, Adv. Math. 119 (1996), 1--25.

\bibitem{Fayers}
M. Fayers, 0-Hecke algebras of finite Coxeter groups, J. Pure Appl. Algebra
199 (2005), 27--41.

\bibitem{Foata}
D. Foata, On the Netto inversion number of a sequence, Proc. Amer.
Math. Soc. 19 (1968), 236--240.

\bibitem{FS}
D. Foata and M.-P. Sch\"utzenberger, Major index and inversion number of permutations,
Math. Nachr. 83 (1970), 143--159.

\bibitem{Garsia}
A. Garsia, Combinatorial methods in the theory of Cohen-Macaulay rings,
Adv. Math. 38 (1980) 229--266.

%\bibitem{GarsiaGessel}
%A. Garsia and I. Gessel, Permutation statistics and partitions, Adv. Math. 31 (1979) 288--305.

\bibitem{GarsiaProcesi}
A. Garsia and C. Procesi, On certain graded $S_n$-modules and the
$q$-Kostka polynomials, Adv. Math. 94 (1992) 82--138.

\bibitem{GarsiaStanton}
A. Garsia and D. Stanton, Group actions of Stanley-Reisner rings and
invariant of permutation groups, Adv. Math. 51 (1984) 107--201.

\bibitem{HHL}
J. Haglund, M. Haiman, and N. Loehr, A combinatorial formula for Macdonald polynomials, J. Amer. Math. Soc. 18 (2005) 735--761.

\bibitem{Hivert}
F. Hivert, Hecke algebras, difference operators, and quasi-symmetric functions, Adv. Math. 155 (2000) 181--238.

\bibitem{HNT}
F. Hivert, J.-C. Novelli, and J.-Y. Thibon, Yang-Baxter bases of 0-Hecke algebras and
representation theory of 0-Ariki-Koike-Shoji algebras, Adv. Math. 205 (2006) 504--548.

\bibitem{HottaSpringer}
R. Hotta and T. A. Springer, A specialization theorem for certain Weyl group representations and an application to the Green polynomials of unitary groups, Invent. Math. 41 (1977), 113--127.

\bibitem{Humphreys}
J. Humphreys, Introduction to Lie algebras and representation Theory, Graduate Texts in Mathematics vol. 9, Springer-Verlag, New York, 1978.

\bibitem{Khammash}
A.A. Khammash, On the homological construction of the Steinberg
representation, J. Pure. Appl. Algebra 87 (1993) 17--21.


\bibitem{KrobThibon}
D. Krob and J.-Y. Thibon, Noncommutative symmetric functions IV:
Quantum linear groups and Hecke algebras at $q=0$, J. Algebraic
Combin. 6 (1997) 339--376.

\bibitem{Kuhn}
N. Kuhn, The modular Hecke algebra and Steinberg representation of
finite Chevalley groups, J. Algebra 91 (1984), 125--141.

\bibitem{Kumar}
S. Kumar, Demazure character formula in arbitrary Kac-Moody setting,
Invent. Math. 89 (1987) 395--423.

\bibitem{LS}
A. Lascoux and M.-P. Sch\"utzenberger, Sur une conjecture de H. O. Foulkes, C. R. Acad. Sci. Paris S\'er. A-B 286 (1978), no. 7, A323--A324.

%\bibitem{LS}
%A. Lascoux and M.-P. Sch\"utzenberger, Keys and standard bases, in D. Stanton (ed.) Invariant Theory and Tableaux, IMA Volumes in Math. and Its Applications, vol. 19, pp. 125-144, Springer, Berlin (1990).


\bibitem{Mason}
S. Mason, An explicit construction of type A Demazure atoms. J.
Algebraic Combin. 29 (2009) 295--313.

%\bibitem{McNamara}
%P. McNamara, EL-labelings, supersolvability and 0-Hecke algebra actions on posets, J. Combin. Theory Ser. A 101 (2003), 69--89.

\bibitem{Norton}
P.N. Norton, 0-Hecke algebras, J. Austral. Math. Soc. A 27 (1979)
337--357.


%\bibitem{Procesi}
%C. Procesi, Lie groups: an approach through invariants and representations, Universitext,  Springer 2007.

%\bibitem{ReinerShimozono}
%V. Reiner and M. Shimozono, Key polynomials and a flagged Littlewood-Richardson rule, J. Comb. Theory Ser. A 70 (1995), pp. 107--143.

\bibitem{ReinerStanton}
V. Reiner and D. Stanton, $(q,t)$-analogues and $GL_n(\FF_q)$,
J. Algebr. Comb. 31 (2010) 411--454.

\bibitem{DesRSK}
M.-P. Sch\"utzenberger, La correspondance de Robinson, Lecture Notes in Math. no. 579, Springer-Verlag, Berlin, 1977, 59--113.

\bibitem{Smith}
S.D. Smith, On decomposition of modular representations from
Cohen-Macaulay geometries, J. Algebra 131 (1990) 598--625.

\bibitem{Stanley}
R. Stanley, Ordered structures and partitions, Mem. Amer. Math. Soc. 119, 1972.

\bibitem{Steinberg}
R. Steinberg, On a theorem of Pittie, Topology 14 (1975) 173--177.

\bibitem{Tanisaki}
T. Tanisaki, Defining ideals of the closures of conjugacy classes and representations of the
Weyl groups, Tohoku Math. J. 34 (1982), 575--585.

\end{thebibliography}
\end{document}